\newtheorem{thm}{Theorem}
\newtheorem{remark}[thm]{Remark}
\newtheorem{prop}[thm]{Proposition}
\newtheorem{lemma}[thm]{Lemma}
\newcommand{\eqdef}{\overset{\mbox{\tiny{def}}}{=}}
\newcommand{\al}{\alpha}
\newcommand{\md}{\mathcal{D}}
\newcommand{\pone}{\partial_{\alpha_1}}
\newcommand{\ptwo}{\partial_{\alpha_2}}
\newcommand{\fzerone}{\mathcal{F}^{0,1}}
\newcommand{\foneone}{\dot{\mathcal{F}}^{1,1}}
\newcommand{\ftwoone}{\dot{\mathcal{F}}^{2,1}}
\newcommand{\yo}{\|f\|_{\dot{\mathcal{F}}^{1,1}}}
\newcommand{\ba}{\begin{equation}}
\newcommand{\ea}{\end{equation}}
\newcommand{\bea}{\begin{eqnarray}}
\newcommand{\eea}{\end{eqnarray}}
\def\beaa{\begin{eqnarray*}}
\def\eeaa{\end{eqnarray*}}
\begin{document}

\title[Muskat Problem with Viscosity Jump 3D]{ On the Muskat problem with viscosity jump: global in time results}

\author{F. Gancedo}

\author{E. Garc\'ia-Ju\'arez}

\author{N. Patel}

\author{R. M. Strain}

\date{\today; 
}

\begin{abstract}
The Muskat problem models the filtration of two incompressible immiscible fluids of different characteristics in porous media. In this paper, we consider both the 2D and 3D setting of two fluids of different constant densities and different constant viscosities. In this situation, the related contour equations are non-local, not only in the evolution system, but also in the implicit relation between the amplitude of the vorticity and the free interface. Among other extra difficulties, no maximum principles are available for the amplitude and the slopes of the interface in $L^\infty$. We prove global in time existence results for medium size  initial stable data in critical spaces. We also enhance previous methods by showing smoothing (instant analyticity), improving the medium size constant in 3D, together with sharp decay rates of analytic norms. The found technique is twofold, giving ill-posedness in unstable situations for very low regular solutions.   
\end{abstract}

\setcounter{tocdepth}{1}

\maketitle
\tableofcontents

\section{Introduction}

This paper studies the dynamics of flows in porous media. This scenario is modeled using the classical Darcy's law \cite{Darcy56}
\begin{equation}\label{Darcy}
\mu(x,t) u(x,t)=-\nabla p(x,t)-\rho(x,t)e_d, 
\end{equation} 
where the velocity of the fluid $u$ is proportional to the spatial gradient pressure $\nabla p$ and the gravity force. Above $x$ is the space variable in $\mathbb{R}^d$ for $d=2,$ or $3$, $t\geq 0$ is time and $e_d$ is the last canonical vector. In the momentum equation, velocity replaces flow acceleration due to the porosity of the medium. It appears with the viscosity $\mu(x,t)$ divided by the permeability constant $\kappa$, here equal to one for simplicity of the exposition. The gravitational field comes with the density of the fluid $\rho(x,t)$ multiplied by the gravitational constant $g$, which is also normalized to one for clarity.

In this work the flow is incompressible 
\begin{equation}\label{incom}
\nabla \cdot u(x,t)=0,
\end{equation}
and takes into consideration the dynamics of two immiscible fluids permeating the porous medium $\mathbb{R}^d$ with different constant densities and viscosities 
\begin{equation}\label{patchsolution}
\mu(x,t)=\left\{\begin{array}{rl}
\mu^1,& x\in D^1(t),\\
\mu^2,& x\in D^2(t),
\end{array}\right.
\quad 
\rho(x,t)=\left\{\begin{array}{rl}
\rho^1,& x\in D^1(t),\\
\rho^2,& x\in D^2(t).
\end{array}\right.
\end{equation}
The open sets $D^1(t)$ and $D^2(t)$ are connected with $\mathbb{R}^d=D^1(t)\cup D^2(t)\cup \partial D^j(t)$, $j=1$, 2 and move with the velocity of the fluid
\begin{equation}\label{dynamics}
\frac{dx}{dt}(t)=u(x(t),t),\quad \forall\, x(t)\in D^j(t), \mbox{ or } x(t)\in \partial D^j(t).
\end{equation}
The evolution equation above is well-defined at the free boundary even though the velocity is not continuous. The discontinuity in the velocity holds due to the density and viscosity jumps. But what matters is the velocity in the normal direction, which is continuous thanks to the incompressibility of the velocity. The geometry of the problem is due to the gravitational force, with the fluid of viscosity $\mu^2$ and density $\rho^2$ located mainly below the fluid of viscosity $\mu^1$ and density $\rho^1$. In particular, there exists a constant $M>1$ large enough such that $\mathbb{R}^{d-1}\times (-\infty,-M]\subset D^2(t)$. 

We are then dealing with the well-established Muskat problem, whose main interest is about the dynamics of the free boundary $\partial D^j(t)$, especially between water and oil \cite{Muskat34}. In this paper, we study precisely this density-viscosity jump  scenario, i.e. when there is a viscosity jump together with a density jump between the two fluids. Due to its wide applicability, this problem has been extensively studied \cite{Bear72}. In particular from the physical and experimental point of view, as in the two-dimensional case the phenomena is mathematically analogous to the two-phase Hele-Shaw cell evolution problem \cite{SaffmanTaylor58}.

From the mathematical point of view, in the last decades there has been a lot of effort to understand the problem  as it generates very interesting incompressible fluid dynamics behavior \cite{Gan17}. 

An important characteristic of the problem is that its Eulerian-Lagrangian formulation (\ref{Darcy},\ref{incom},\ref{patchsolution},\ref{dynamics}) understood in a weak sense provides an equivalent self-evolution equation for the interface $\partial D^j(t)$. This is the so-called contour evolution system, which we will now provide for 3D Muskat in order to understand the dynamics of its solutions. 

Due to the irrotationality of the velocity in each domain $D^j(t)$, the vorticity is concentrated on the interface $\partial D^j(t)$. That is the vorticity is given by a delta distribution as follows
$$
\nabla\wedge u(x,t)=\omega(\alpha,t)\delta(x=X(\alpha,t)),
$$
where $\omega(\alpha,t)$ is the amplitude of the vorticity and $X(\alpha,t)$ is a global parameterization of $\partial D^j(t)$ with
$$
\partial D^j(t)=\{X(\alpha,t):\, \alpha\in\mathbb{R}^2\}.
$$
It means that
$$
\int_{\mathbb{R}^3}u(x,t)\cdot\nabla\wedge\varphi(x)dx=\int_{\mathbb{R}^2}\omega(\alpha,t)\cdot\varphi(X(\alpha,t))d\alpha,
$$
for any smooth compactly supported field $\varphi$. The evolution equation reads
\begin{equation}\label{eqevcon}
\partial_tX(\alpha,t)=BR(X,\omega)(\alpha,t)+C_1(\alpha,t)\partial_{\alpha_{1}} X(\alpha,t)+C_2(\alpha,t)\partial_{\alpha_{2}} X(\alpha,t),
\end{equation}
where $BR$ is the well-known Birkhoff-Rott integral
\begin{equation}\label{eqBR}
BR(X,\omega)(\alpha,t)=-\frac{1}{4\pi}\text{p.v.}\int_{\mathbb{R}^2}\frac{X(\alpha,t)-X(\beta,t)}{|X(\alpha,t)-X(\beta,t)|^3}\wedge \omega(\beta,t) d\beta,
\end{equation}
which appears using the Biot-Savart law and taking limits to the free boundary. 
Above the coefficients $C_1$ and $C_2$ represent the possible change of coordinates for the evolving surface and the prefix p.v. indicates a principal value integral. It is possible to close the system using that the velocity is given by different potentials in each domain and we denote the potential jump across the interface by the  function $\Omega(\alpha,t)$. Taking limits approaching the free boundary in Darcy's law yields the non-local implicit identity
\begin{equation}\label{eqOmega}
\Omega(\alpha,t)=A_\mu\mathcal{D}(\Omega)(\alpha,t)-2A_\rho X_3(\alpha,t),\quad A_\mu=\frac{\mu^2-\mu^1}{\mu^2+\mu^1}, \quad A_\rho=\frac{\rho^2-\rho^1}{\mu^2+\mu^1},
\end{equation}
where $\mathcal{D}$ is the double layer potential
\begin{equation}\label{dlp}
\mathcal{D}(\Omega)(\alpha,t)=
\frac{1}{2\pi}\text{p.v.}\!\!\int_{\mathbb{R}^2}\! \frac{X(\alpha,t)\!-\!X(\beta,t)}{|X(\alpha,t)\!-\!X(\beta,t)|^3}\cdot \partial_{\alpha_{1}}X(\beta,t)\wedge\partial_{\alpha_{2}}X(\beta,t)\Omega(\beta,t) d\beta.
\end{equation}
In that limit procedure, the continuity of the pressure at the free boundary is used, which is a consequence of the fact that Darcy's law \eqref{Darcy} is understood in a weak sense. Relating the potential and the velocity jumps at the interface provides
\begin{equation}\label{omo}
\omega(\alpha,t)=\partial_{\alpha_{2}}\Omega(\alpha,t) \partial_{\alpha_{1}} X(\alpha,t)-\partial_{\alpha_{1}}\Omega(\alpha,t) \partial_{\alpha_{2}} X(\alpha,t),
\end{equation}
and therefore it is possible to close the contour evolution system by (\ref{eqevcon},\ref{eqBR},\ref{eqOmega},\ref{dlp},\ref{omo}) (see \cite{CCG13} for a detail derivation of the system).

Then the next question to ask is about the well-posedness of the problem. A remarkable peculiarity is that, in general, it does not hold. The system has to initially satisfy the so-called Rayleigh-Taylor condition (also called the Saffman-Taylor condition for the Muskat problem) to be well-posed. This condition holds if the difference of the gradient of the pressure in the normal direction at the interface is strictly positive \cite{Ambrose04},\cite{Ambrose07}, i.e the stable regime. For large initial data, well-posedness was proved in \cite{CG07} for the case with density jump in two and three dimensions. In that case, the Saffman-Taylor condition holds if the denser fluid lies below the less dense fluid. The density-viscosity jump stable situation was proved to exist locally in time in 2D \cite{CCG11} and in 3D \cite{CCG13}. Although these proofs use different approaches, it was essential in both proofs to find bounds for the amplitude of the vorticity in equation \eqref{eqOmega} in terms of the free boundary. There are recent results where local-in-time existence is shown in 2D for lower regular initial data given by graphs in the Sobolev space $H^2$ for the one-fluid case ($\mu^2=0$) \cite{CGS16} and in the two-fluid case ($\mu^2\geq 0$) \cite{Mat17}. In the 2D density jump case the local-in-time existence has been shown for any subcritical Sobolev spaces $W^{2,p}$, $1<p<\infty$ \cite{CGSV17}, and $H^s$, $3/2<s<2$ \cite{Mat16}. Here, the terminology subcritical is used in terms of the scaling of the problem, as $X^\lambda(\alpha,t)=\lambda^{-1} X(\lambda\alpha,\lambda t)$ and $\omega^\lambda(\alpha,t)=\omega(\lambda\alpha,\lambda t)$ are solution of (\ref{eqevcon},\ref{eqBR},\ref{eqOmega},\ref{dlp},\ref{omo}) for any $\lambda\geq 0$ if $X(\alpha,t)$ and $\omega(\alpha,t)$ are. Therefore $\dot{W}^{1,\infty}$, $\dot{W}^{2,1}$ and $\dot{H}^{3/2}$ are critical and invariant homogeneous spaces for the system in 2D, or $\dot{W}^{1,\infty}$, $\dot{W}^{3,1}$ and $\dot{H}^{2}$ in the 3D case. It is then easy to check that the main space $\dot{\mathcal{F}}^{1,1}$ used in this paper (see definition below) is also critical. 

On the other hand, the Muskat problem can be unstable for some scenarios, when the Saffman-Taylor condition does not hold. In particular, if the difference of the gradient of the pressures in the normal direction at the interface is strictly negative, the contour evolution problem is ill-posed in the viscosity jump case \cite{SCH04} as well as the density jump situation \cite{CG07} in subcritical spaces. With the Eulerian-Lagrangian formulation (\ref{Darcy},\ref{incom},\ref{patchsolution},\ref{dynamics}) it is possible to find weak solutions in the density jump case where the fluid densities mix in a strip close to the flat steady unstable state \cite{Sze12} and for any $H^5$ unstable graph \cite{CCF16}. In the contour dynamics setting, adding capillary forces to the system makes the contour equation well-posed \cite{ES97}. When the Saffman-Taylor condition holds, the system is structurally stable to solutions without capillary forces if the surface tension coefficient is close to zero \cite{Ambrose14}. However, there exist unstable scenarios for interfaces interacting with capillary forces \cite{Ott97} which have been shown to have exponential growth for low order norms under small scales of times \cite{GHS07}. The system also exhibits finger shaped unstable stationary-states solutions \cite{EM11}.

A very important feature of this problem is that it can develop finite time singularities starting from stable situations. The Muskat problem then became the first incompressible model where blow-up for solutions with initial data in well-posed scenarios had been proven rigorously. Specifically, in the 2D density jump case, solutions starting in stable situation (denser fluid below a graph) become instantly analytic and move to unstable regimes in finite time \cite{CCFGL12}. In the unstable regime the interface is not a graph anymore, and at a later time the Muskat solution blows-up in finite time showing loss of regularity \cite{CCFG13}. The geometry of those initial data are not well understood, as numerical experiments show that some solutions with large initial data can remain smooth \cite{CGO08}, and the patterns can become more complicated for scenarios with fixed boundary effects \cite{GG14}. As a matter of fact, some solutions can pass from the stable to the unstable regime and enter again to the stable regime \cite{CSZ17}.

The Muskat problem also develops a different kind of blow-up behavior in stable regimes: the so-called splash singularities. This singularity occurs if two different particles on the free boundary collide in finite time while the regularity of the interface is preserved. This collision can not occur along a connected segment of the curve of particles in either the density jump \cite{CG10} or the density-viscosity jump case \cite{CP17}. In particular, the splash singularity is ruled out for the two-fluid case \cite{GS14} but it takes place in one-fluid stable scenarios \cite{CCFG16}.

The question we study in this paper is about the global in time existence, uniqueness, regularity and decay of solution of the Muskat problem in stable regimes and ill-posedness in unstable regimes. In the viscosity \cite{SCH04}, density \cite{CG07} and density-viscosity jump 2D cases \cite{EM11},
\cite{CGS16} there exist global in time classical solutions for small initial data in subcritical norms which become instantly analytic, thereby demonstrating the parabolic character of the system in these situations. See \cite{BSW14} for the same result in the 2D density jump case with small initial data in critical norms, represented on the Fourier side by positive measure. In \cite{CGSV17}, global in time existence of classical solutions are shown to exist with small initial slope. In \cite{CCGS13}, global existence of 2D density-jump Muskat Lipschitz solutions are given for initial data with slope less than one. See \cite{Gra14} for an extension of the result with fixed boundary and \cite{CCGRS16} for the 3D scenario, where the $L^\infty$ norm of the free boundary gradient has to be smaller than $1/3$. In \cite{CCGS13} and \cite{CCGRS16} global existence and uniqueness is proved for solutions with continuous and bounded slope and $L^1$ in time bounded curvature in the density jump case for initial data in critical spaces with medium size. More specifically, the initial profiles are given by functions, i.e. $X(\alpha,0) = (\alpha, f_{0}(\alpha))$, for a function $f_{0}(\alpha)$ of size less than $k_0$:
$$
\|f_0\|_{\dot{\mathcal{F}}^{1,1}}=\int_{\mathbb{R}^{d-1}}\! \!\! d\xi |\xi||\hat{f}_0(\xi)|< k_{0,d},\qquad d=2,3,
$$    
where $k_{0,d}$ is an explicit constant, $k_{0,3}> 1/5$ in 3D and $k_{0,2}>1/3$ in 2D. In \cite{PS17}, the optimal time decay of those solutions are proven, for initial data additionally bounded in subcritical Sobolev norms. We also point out work \cite{Cam17}, where the Lipschitz solutions given in \cite{CCFG13} are shown to become smooth by using a conditional regularity result given in \cite{CGSV17} together with an instant generation of a modulus of continuity.

Next, we describe the main results and novelties in this work. This paper extends the global existence results in 2D and 3D from \cite{CCGRS16} to the more general case with density-viscosity jump. Moreover, in 3D we improve the available constant for global existence and make it equal to the 2D constant in the $A_\mu=0$ case. Precisely, it is given by initial data satisfying that
$$
\|f_0\|_{\dot{\mathcal{F}}^{1,1}}=\int d\xi |\xi||\hat{f}_0(\xi)|< k(|A_{\mu}|),
$$ 
where $k:[0,1]\to [k(1),k_0]$ is decreasing and $k(0)=k_0=k_{0,2}$. We would like to point out that due to the nature of equation \eqref{eqOmega}, maximum principles are not available for the amplitude and the slopes in the $L^\infty$ norm and the parabolic character of the equation is not as clear as in the case $A_\mu=0$. We provide the first global existence result for this important scenario in a critical space. The space $\dot{\mathcal{F}}^{1,1}$ appears as a natural framework to perform the task of inverting the operator $(I-A_\mu\mathcal{D})$ in order to get bounds for $\omega$ in terms of the interface. In particular, we also improve the method in \cite{CCGRS16} as we are able to show smoothing effects, proving that solutions with medium size initial data become instantly analytic. Furthermore, we show uniform bounds of the interface in $L^{\infty}$ and $L^{2}$ norms with analytic weights. Then, we show optimal decay rates for the analyticity of the critical solutions, improving the results in \cite{PS17}.

Finally, we show with the new approach in the paper that solutions are ill-posed in the unstable regime even for low regularity solutions understood in the contour dynamics setting. We give precise statements of these results in Section \ref{MainResults}. In next section we provide the contour equations we use throughout the paper.

\section{Formulation of the Muskat Problem with Viscosity Jump}\label{formulation}
In this section, we derive the contour equation formula given by (\ref{eqevcon},\ref{eqBR},\ref{eqOmega},\ref{dlp},\ref{omo}) in terms of a graph. This equation will be used throughout the paper to state the main results and to prove them. To simplify notation we shall write $f(\alpha,t)=f(\al)$ when there is no danger of confusion. 

In the 3D case, if the evolving interface can be described as a graph
\begin{equation*}
X(\alpha,t)=(\alpha_1,\alpha_2,f(\alpha,t)),\hspace{0.6cm} \alpha=(\alpha_1,\alpha_2)\in\mathbb{R}^2,
\end{equation*}
then the equations \eqref{eqevcon} are reduced to one equation as follows
\begin{align*}
0&=-\frac{1}{4\pi}\textup{p.v.} \int_{\mathbb{R}^2} \frac{(\alpha_2-\beta_2)\omega_3(\beta)-\omega_2(\beta)(f(\alpha)-f(\beta))}{|(\alpha,f(\alpha))-(\beta,f(\beta))|^3}d\beta +C_1(\alpha),
\end{align*}

\begin{align*}
0&=-\frac{1}{4\pi}\textup{p.v.} \int_{\mathbb{R}^2} \frac{\omega_1(\beta)(f(\alpha)-f(\beta))-(\alpha_1-\beta_1)\omega_3(\beta)}{|(\alpha,f(\alpha))-(\beta,f(\beta))|^3}d\beta +C_2(\alpha),
\end{align*}
\begin{align*}
f_t(\alpha)&\!=\!-\frac{1}{4\pi}\!\int_{\mathbb{R}^2}\!\!\frac{(\alpha_1\!-\!\beta_1)\omega_2(\beta)\!-\!(\alpha_2\!-\!\beta_2)\omega_1(\beta)}{|(\alpha,f(\alpha))\!-\!(\beta,f(\beta))|^3}d\beta\!+\!C_1(\alpha)\partial_{\alpha_1}f(\alpha)\!+\!C_2(\alpha)\partial_{\alpha_2}f(\alpha).
\end{align*}
Thus, substituting the coefficients from the tangent terms into the evolution equation and applying a change of variables, we obtain the equation for $f$: 
\begin{equation}\label{ftI3}
f_{t}(\alpha) = I_{1}(\al) + I_{2}(\al) + I_{3}(\al),
\end{equation}
where
\begin{equation}
\label{I1}
I_{1}(\al) = -\frac{1}{4\pi}\text{p.v.} \int_{\mathbb{R}^{2}} \frac{\beta_{1} \omega_{2}(\alpha-\beta)-\beta_{2}\omega_{1}(\alpha-\beta)}{(1+(\Delta_{\beta}f(\alpha))^{2})^{\frac{3}{2}}}\frac{d\beta}{|\beta|^{3}} ,
\end{equation}
\begin{equation}
\label{I2}
I_{2}(\al)\! =\!\frac{1}{4\pi}\text{p.v.}\!\! \int_{\mathbb{R}^{2}} \!\!\frac{\Delta_{\beta}f(\alpha)\partial_{\alpha_{2}}f(\alpha) \omega_{1}(\alpha\!-\!\beta)\!-\!\Delta_{\beta}f(\alpha)\partial_{\alpha_{1}}f(\alpha) \omega_{2}(\alpha\!-\!\beta)}{(1+(\Delta_{\beta}f(\alpha))^{2})^{\frac{3}{2}}}\frac{d\beta}{|\beta|^{2}},
\end{equation}
\begin{equation}\label{I3}
I_{3}(\al)= \frac{1}{4\pi}\text{p.v.} \int_{\mathbb{R}^{2}} \frac{\beta_{2} \partial_{\alpha_{1}}f(\alpha)-\beta_{1}\partial_{\alpha_{2}}f(\alpha)}{(1+(\Delta_{\beta}f(\alpha))^{2})^{\frac{3}{2}}}\omega_{3}(\alpha-\beta)\frac{d\beta}{|\beta|^{3}}.
\end{equation}
Above we use the notation $\Delta_{\beta}f(\alpha)$ for
$$
\Delta_{\beta}f(\alpha)=(f(\al)-f(\al-\beta))|\beta|^{-1}.
$$
We have the following equations for the vorticity coming from \eqref{omo}:
\begin{align}\label{omegai}
\begin{split}
\omega_{1} = \partial_{\alpha_{2}}\Omega,
\quad
\omega_{2} = -\partial_{\alpha_{1}}\Omega,
\quad
\omega_{3} = \partial_{\alpha_{2}}\Omega\partial_{\alpha_{1}}f-\partial_{\alpha_{1}}\Omega\partial_{\alpha_{2}}f.
\end{split}
\end{align}

Introducing \eqref{omegai} into \eqref{I1} and \eqref{I2} they can be written as
\begin{equation*}
\begin{aligned}
I_1(\al)&= \frac{1}{4\pi}\text{p.v.} \int_{\mathbb{R}^{2}} \frac{1 }{(1+(\Delta_{\beta}f(\alpha))^{2})^{\frac{3}{2}}}\frac{\beta}{|\beta|^3}\cdot \nabla \Omega(\alpha-\beta) d\beta,
\end{aligned}
\end{equation*}
\begin{equation}\label{I2Amu}
\begin{aligned}
I_2(\al)&= \frac{1}{4\pi}\text{p.v.} \int_{\mathbb{R}^{2}} \frac{\Delta_\beta f(\alpha)\nabla f(\alpha)}{(1+(\Delta_{\beta}f(\alpha))^{2})^{\frac{3}{2}}}\cdot\frac{\nabla \Omega(\alpha-\beta)}{|\beta|^2} d\beta.
\end{aligned}
\end{equation}
By adding and subtracting the appropriate quantity, 
we obtain the following
\begin{equation*}
I_{1}(\al) =\frac12\Lambda \Omega(\al) + \frac{1}{4\pi}\textup{p.v.}\int_{\mathbb{R}^2} \Big((1+(\Delta_{\beta}f(\alpha))^{2})^{-\frac{3}{2}}-1\Big)\frac{\beta}{|\beta|^3}\cdot\nabla\Omega(\alpha-\beta)d\beta,
\end{equation*}
where the operator $\Lambda$ is given by the Riesz transforms 
\begin{equation}\label{riesz}
\Lambda=R_1\partial_{\alpha_1}+R_2\partial_{\alpha_2}
\end{equation} and also as a Fourier multiplier by $\widehat{\Lambda}=|\xi|$. Plugging the identity for $\Omega$ \eqref{eqOmega}, the equation below shows the parabolic structure of the equation as 
\begin{equation}\label{I1Amu}
\begin{aligned}
I_{1}(\al)=-&A_{\rho}\Lambda f(\alpha)+\frac{A_\mu}{2}\Lambda \md(\Omega)(\alpha)\\
&\quad+\frac{1}{4\pi}\textup{p.v.}\int_{\mathbb{R}^2} \Big((1+(\Delta_{\beta}f(\alpha))^{2})^{-\frac{3}{2}}-1\Big)\frac{\beta}{|\beta|^3}\cdot\nabla\Omega(\alpha-\beta)d\beta.
\end{aligned}	
\end{equation}
Using formulas \eqref{omegai} and \eqref{eqOmega} in $I_3(\al)$ \eqref{I3} we are able to find that
\begin{equation}\label{I3Amu}
I_{3}(\al)= \frac{A_\mu}{4\pi}\text{p.v.} \int_{\mathbb{R}^{2}} \frac{\beta \cdot\nabla^{\bot} f(\alpha) \nabla\md(\Omega
	)(\alpha-\beta)\cdot\nabla^{\bot}f(\alpha-\beta)}{(1+(\Delta_{\beta}f(\alpha))^{2})^{\frac{3}{2}}}\frac{d\beta}{|\beta|^{3}}.
\end{equation}
We can finally write the contour equation \eqref{ftI3} by using formulas \eqref{I1Amu}, \eqref{I2Amu} and \eqref{I3Amu} to get:
\begin{equation}\label{decompnonlinear}
f_{t} = -A_{\rho}\Lambda f+N(f),\qquad\mbox{where}\qquad N(f)=N_1(f)+N_2(f)+N_3(f),
\end{equation}
where $N(f) = N(f,\Omega)$ and
\begin{equation}\label{N1N2N3}
\begin{aligned}
N_1&= \frac{A_\mu}{2}\Lambda \mathcal{D}(\Omega)(\alpha),\\
N_{2}& = \frac{1}{4\pi}\text{p.v.}\int_{\mathbb{R}^2} \left( \frac{\frac{\beta}{|\beta|}+\Delta_{\beta}f(\alpha)\nabla f(\alpha)}{(1+(\Delta_{\beta}f(\alpha))^{2})^{3/2}} - \frac{\beta}{|\beta|} \right) \cdot\frac{\nabla\Omega(\alpha-\beta)}{|\beta|^{2}} d\beta,\\
N_3&=\frac{A_\mu}{4\pi}\text{p.v.} \int_{\mathbb{R}^{2}} \frac{\beta \cdot\nabla^{\bot} f(\alpha)\nabla\md(\Omega
	)(\alpha-\beta)\cdot\nabla^{\bot}f(\alpha-\beta)}{(1+(\Delta_{\beta}f(\alpha))^{2})^{\frac{3}{2}}}\frac{d\beta}{|\beta|^{3}}.
\end{aligned}
\end{equation}
The equation for $\Omega$ is given implicitly by
\begin{equation}\label{Omegagraph}
\Omega(\alpha,t)=A_\mu \md(\Omega)(\alpha,t)-2A_\rho f(\alpha,t),
\end{equation}
where the operator $\md(\Omega)$ \eqref{dlp} is rewritten as follows
\begin{equation}\label{DOmegagraph}
\md(\Omega)(\alpha) = \frac{1}{2\pi} \int_{\mathbb{R}^{2}} \frac{\Delta_{\beta}f(\alpha)-\frac{\beta\cdot \nabla f(\alpha-\beta)}{|\beta|}  }{(1+(\Delta_{\beta}f(\alpha))^{2})^{3/2}}\frac{\Omega(\alpha-\beta)}{|\beta|^{2}} d\beta.
\end{equation}
Note that the derivatives of $\md(\Omega)$ can be written in the following manner
\begin{equation}\label{partialD}
\partial_{\alpha_{i}}\md(\Omega)(\alpha,t) = 2 BR(f,\omega)(\alpha,t)\cdot \partial_{\alpha_{i}}(\alpha_{1},\alpha_{2},f(\alpha)),
\end{equation}
and therefore
\begin{equation}
\label{partialalphaOmega}
\partial_{\alpha_{i}}\Omega(\alpha,t) - 2A_{\mu} BR(f,\omega)(\alpha,t)\cdot \partial_{\alpha_{i}}(\alpha_{1},\alpha_{2},f(\alpha)) = -2A_{\rho}\partial_{\alpha_{i}}f(\alpha,t).
\end{equation}
In the case of a graph, the Birkhoff-Rott integrals are also reduced in the following manner
\begin{equation*}
BR(f,\omega) \eqdef (BR_{1}(f,\omega),BR_{2}(f,\omega), BR_{3}(f,\omega)),
\end{equation*}
where we use the shorthand $BR_{i} \eqdef BR_{i}(f,\omega)$ to be the terms
\begin{equation}
\label{br1}
BR_{1} = \frac{-1}{4\pi}\text{p.v.}\int_{\mathbb{R}^{2}}\frac{\frac{\beta_{2}}{|\beta|}\omega_{3}(\alpha-\beta) - \Delta_{\beta}f(\alpha)\omega_{2}(\alpha-\beta)}{(1+\Delta_{\beta}f(\alpha)^{2})^{\frac{3}{2}}} \frac{d\beta}{|\beta|^{2}},
\end{equation}
\begin{equation}
\label{br2}
BR_{2} = \frac{-1}{4\pi}\text{p.v.}\int_{\mathbb{R}^{2}}\frac{\Delta_{\beta}f(\alpha)\omega_{1}(\alpha-\beta)-\frac{\beta_{1}}{|\beta|}\omega_{3}(\alpha-\beta)}{(1+\Delta_{\beta}f(\alpha)^{2})^{\frac{3}{2}}}\frac{d\beta}{|\beta|^{2}},
\end{equation}
\begin{equation}
\label{br3}
BR_{3} = \frac{-1}{4\pi}\text{p.v.}\int_{\mathbb{R}^{2}}\frac{\frac{\beta_{1}}{|\beta|}\omega_{2}(\alpha-\beta)-\frac{\beta_{2}}{|\beta|}\omega_{1}(\alpha-\beta)}{(1+\Delta_{\beta}f(\alpha)^{2})^{\frac{3}{2}}}\frac{d\beta}{|\beta|^{2}}.
\end{equation}
This completes our explanation of the formulation in 3D.

We now state the formulation in 2D. Proceeding similarly as above one obtains that
\begin{equation}\label{graph2d}
f_{t} = -A_{\rho}\Lambda f+N(f),\qquad\mbox{where}\qquad N(f)=N_1(f)+N_2(f),
\end{equation}
where $N(f) = N(f,\Omega)$ and
\begin{equation}\label{nonlinear2d}
\begin{aligned}
N_1&= \frac{A_\mu}{2} \Lambda\mathcal{D}(\Omega)(\alpha),\\
N_2& = \frac{1}{2\pi}\text{p.v.}\int \frac{\Delta_\beta f(\alpha)-\partial_\alpha f(\alpha)}{1+\Delta_\beta f(\alpha)^2}\Delta_{\beta}f(\alpha)\frac{\partial_\alpha\Omega(\alpha-\beta)}{\beta} d\beta.
\end{aligned}
	\end{equation}
The equation for $\Omega$ is given implicitly by
\begin{equation}\label{omega2d}
	\Omega(\alpha,t)=A_\mu \mathcal{D}(\Omega)(\alpha,t)-2A_\rho  f(\alpha,t),
\end{equation}
where the operator $\mathcal{D}(\Omega)(\alpha,t)$ is rewritten as follows
\begin{equation}\label{t2d}
	\mathcal{D}(\Omega)(\alpha,t) = \frac{1}{\pi}\int_{\mathbb{R}} \frac{\Delta_\beta f(\alpha)-\partial_\alpha f(\alpha-\beta)}{1+\Delta_\beta f(\alpha)^2}\frac{\Omega(\alpha-\beta)}{\beta}d\beta.
\end{equation}
Note that the vorticity is given by $\omega(\alpha)=\partial_\alpha \Omega (\alpha)$.

\section{Main Results}\label{MainResults}

In this section, we present the main results and briefly give an outline of the structure of this paper. The first result is global well-posedness in the critical space $\mathcal{\dot{F}}^{1,1}\cap L^{2}$ in 3D, where we define the norms
$$
\|f\|_{\mathcal{\dot{F}}^{s,1}} \eqdef \int |\xi|^{s}|\hat{f}(\xi)| d\xi, \quad s> -2.
$$
We also denote $\|f\|_{\mathcal{\dot{F}}^{0,1}}=\|f\|_{\mathcal{F}^{0,1}}$.

\begin{thm}[Existence and Uniqueness, 3D]\label{Global}
	Let $f_{0}\in \dot{\mathcal{F}}^{1,1}\cap L^{2}$ satisfy the bound
	$$\|f_{0}\|_{\mathcal{\dot{F}}^{1,1}} < k(|A_{\mu}|)$$
	for a constant $k(|A_{\mu}|)$ depending on the Atwood number $A_{\mu}$. Then there exists a unique solution to (\ref{decompnonlinear}-\ref{DOmegagraph}) with $f\in L^{\infty}(0,T;\mathcal{\dot{F}}^{1,1}\cap L^{2})\cap L^{1}(0,T;\dot{\mathcal{F}}^{2,1})$ such that $f(\alpha,0) = f_{0}(\alpha)$ and
	\begin{equation}\label{globalineq}
	\|f\|_{L^2}(t)\leq \|f_0\|_{L^2},\quad \|f\|_{\mathcal{\dot{F}}^{1,1}}(t) + \sigma \int_{0}^{t}\|f\|_{\mathcal{\dot{F}}^{2,1}}(\tau)d\tau\leq \|f_{0}\|_{\mathcal{\dot{F}}^{1,1}} < k(|A_{\mu}|),
	\end{equation}
	for a positive constant $\sigma$ depending on the initial profile $f_{0}(\alpha)$.
\end{thm}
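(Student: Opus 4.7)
The plan is to reduce the theorem to a closed a priori estimate in $\dot{\mathcal{F}}^{1,1}\cap L^{2}$ and then recover a genuine solution via an approximation scheme. Taking the Fourier transform of (\ref{decompnonlinear}) yields $\rd_t\hat f(\xi,t) = -A_\rho|\xi|\hat f(\xi,t)+\widehat{N(f)}(\xi,t)$; a standard pointwise-in-$\xi$ computation (based on $\rd_t|\hat f|^2=2\mathrm{Re}(\overline{\hat f}\rd_t\hat f)$) gives $\rd_t|\hat f|+A_\rho|\xi||\hat f|\le |\widehat{N(f)}|$, and integrating against $|\xi|\,d\xi$ over $\mathbb{R}^2$ produces the fundamental differential inequality
$$\frac{d}{dt}\|f\|_{\dot{\mathcal{F}}^{1,1}}+A_\rho\|f\|_{\dot{\mathcal{F}}^{2,1}}\le \|N(f)\|_{\dot{\mathcal{F}}^{1,1}}.$$
If I can establish a nonlinear estimate $\|N(f)\|_{\dot{\mathcal{F}}^{1,1}}\le F(\|f\|_{\dot{\mathcal{F}}^{1,1}})\|f\|_{\dot{\mathcal{F}}^{2,1}}$ for an explicit increasing real-analytic $F$ vanishing at zero, then \emph{defining} $k(|A_\mu|)$ as the largest $y$ with $F(y)<A_\rho$ and running a continuity argument closes the estimate: the right-hand side is absorbed into the dissipation and (\ref{globalineq}) follows with $\sigma=A_\rho-F(\|f_0\|_{\dot{\mathcal{F}}^{1,1}})>0$.

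The bulk of the work, and the principal obstacle, is producing such an $F$ in the presence of the implicit relation (\ref{Omegagraph})--(\ref{DOmegagraph}) for $\Omega$. I would invert $(I-A_\mu\md)$ by Neumann series, writing formally $\Omega=-2A_\rho\sum_{n\ge 0}(A_\mu\md)^nf$, and make sense of the series by Taylor expanding $(1+(\Delta_\beta f)^2)^{-3/2}=\sum_{m\ge 0}\binom{-3/2}{m}(\Delta_\beta f)^{2m}$ inside (\ref{DOmegagraph}) and (\ref{N1N2N3}). Both $\md(\Omega)$ and $N(f)$ then become infinite sums of singular multilinear operators of Calder\'on-commutator type. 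The Wiener-algebra structure of the $\dot{\mathcal{F}}^{s,1}$ scale under convolution on the Fourier side, via bilinear estimates such as $\|gh\|_{\dot{\mathcal{F}}^{s,1}}\ls \|g\|_{\dot{\mathcal{F}}^{s,1}}\|h\|_{\mathcal{F}^{0,1}}+\|g\|_{\mathcal{F}^{0,1}}\|h\|_{\dot{\mathcal{F}}^{s,1}}$ together with the embedding $\mathcal{F}^{0,1}\hookrightarrow L^\infty$, controls each multilinear piece by a product of $\dot{\mathcal{F}}^{s,1}$ norms of $f$, paying exactly one derivative on the highest-frequency factor. Summing the resulting geometric series in $|A_\mu|\|f\|_{\dot{\mathcal{F}}^{1,1}}$ closes the Neumann series for $\Omega$ and gives explicit bounds
$$\|\Omega\|_{\dot{\mathcal{F}}^{s,1}}\le \Phi(\|f\|_{\dot{\mathcal{F}}^{1,1}})\|f\|_{\dot{\mathcal{F}}^{s,1}},\qquad \|N(f)\|_{\dot{\mathcal{F}}^{1,1}}\le F(\|f\|_{\dot{\mathcal{F}}^{1,1}})\|f\|_{\dot{\mathcal{F}}^{2,1}}.$$
The sharpness of $k(|A_\mu|)$, and in particular matching the 2D constant when $A_\mu=0$, rests on performing an antisymmetrization $\beta\to-\beta$ inside the Birkhoff--Rott integrals so that odd-order multilinear terms drop out, leaving only the even contributions with optimal Beta-type coefficients in the series defining $F$. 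The combinatorial bookkeeping of these cancellations is noticeably more delicate in 3D than in 2D, and it is what makes this the hard step.

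To actually construct the solution I would set up a Picard iteration $\rd_t f^{(n+1)}+A_\rho\Lambda f^{(n+1)}=N(f^{(n)})$ from smoothed data, with $\Omega^{(n)}$ given by the above Neumann series evaluated at $f^{(n)}$. The a priori argument, applied uniformly in $n$, keeps $\{f^{(n)}\}$ inside the ball $\{\|f\|_{\dot{\mathcal{F}}^{1,1}}<k(|A_\mu|)\}$ and bounded in $L^\infty_t(\dot{\mathcal{F}}^{1,1}\cap L^2)\cap L^1_t\dot{\mathcal{F}}^{2,1}$, and an Aubin--Lions compactness step delivers a solution of (\ref{decompnonlinear})--(\ref{DOmegagraph}) in this regularity class. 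The $L^2$ bound $\|f(t)\|_{L^2}\le \|f_0\|_{L^2}$ comes from a separate energy identity for $|\hat f|^2$: the sign of $-A_\rho|\xi|$ and absorbing the nonlinear piece by analogous multilinear estimates into the dissipative term give monotonicity of $\|f\|_{L^2}^2$. Uniqueness is obtained by writing the equation for the difference $g=f_1-f_2$ of two solutions, deriving Lipschitz-in-$f$ bounds on the map $f\mapsto N(f)$ that mirror the estimates for $N$, and applying Gr\"onwall in $L^2$ or $\dot{\mathcal{F}}^{0,1}$.
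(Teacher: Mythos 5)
Your overall architecture — transferring the equation to the Fourier side, deriving
\[
\frac{d}{dt}\|f\|_{\dot{\mathcal{F}}^{1,1}}+A_\rho\|f\|_{\dot{\mathcal{F}}^{2,1}}\le \|N(f)\|_{\dot{\mathcal{F}}^{1,1}},
\]
Taylor-expanding the kernels, exploiting the Wiener-algebra structure of $\dot{\mathcal{F}}^{s,1}$, and using the $\beta\to-\beta$ antisymmetrization to obtain the sharp numerical constants in the Birkhoff--Rott integrals — matches the paper's proof strategy (Sections 4--5). The paper does not literally write a Neumann series for $(I-A_\mu\md)^{-1}$: instead it derives a closed system of linear inequalities for $\|\partial_{\alpha_i}\Omega\|$ and $\|\partial_{\alpha_i}\md\|$ and solves them, but this is mathematically the same device and requires the same smallness (the denominators such as $1-5A_\mu S_1$ in the constants $C_1,\dots,C_5$ must stay positive). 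The paper's regularization also differs from yours: it mollifies the whole equation with heat kernels, solves the mollified equation by the classical Picard theorem for Lipschitz ODEs in $H^4$, and then proves a Cauchy estimate in $L^\infty(\mathcal{F}^{0,1})$ to pass to the limit; your direct Picard iteration on the nonlinearity would need extra care, since the induction bound you claim (``the a priori argument, applied uniformly in $n$, keeps the iterates in the ball'') does not close as stated — the a priori inequality transfers $\int\|f^{(n)}\|_{\dot{\mathcal{F}}^{2,1}}$ to the right-hand side with a coefficient that need not be less than one unless you run a contraction-mapping argument or work on a short time interval.

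The genuine gap is the $L^{2}$ maximum principle. You propose to prove $\|f(t)\|_{L^2}\le\|f_0\|_{L^2}$ by ``absorbing the nonlinear piece by analogous multilinear estimates into the dissipative term,'' but this cannot work. In the $L^{2}$ energy balance the nonlinear contribution produces terms of the schematic form $\|f\|_{L^{2}}\,\|\Omega\|_{\dot H^{1}}$ or $\|f\|_{L^{2}}\,\|f\|_{\dot{\mathcal{F}}^{3/2,1}}$ after distributing the derivative (this is exactly what the paper's Theorem \ref{L2} shows); by interpolation $\|f\|_{L^{2}}\|f\|_{\dot H^{1}}\ge\|f\|_{\dot H^{1/2}}^{2}$, so these are \emph{not} dominated by the dissipation $A_\rho\|f\|_{\dot H^{1/2}}^{2}$, and the best one obtains from the Fourier-side estimates is $\|f\|_{L^{2}_\nu}^{2}(t)\le\|f_0\|_{L^{2}}^{2}\exp\bigl(R(\|f_0\|_{\dot{\mathcal{F}}^{1,1}})\bigr)$ after Gr\"onwall, not monotonicity. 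The paper gets the exact bound in Section \ref{L2max} by an entirely different route: since the velocity is a potential flow in each phase, integration by parts of $\mu^i\phi^i\Delta\phi^i$ combined with the continuity of pressure and normal velocity at the interface gives the structural identity
\[
(\rho^2-\rho^1)\|f\|_{L^2}^2(t)+2\int_{\mathbb{R}^3}\mu|u|^2\,dx=(\rho^2-\rho^1)\|f_0\|_{L^2}^2,
\]
from which monotonicity is immediate when $\rho^2>\rho^1$. This physical-space argument, which uses the Darcy/potential structure and the relation between the normal velocity and $f_t$, has no purely Fourier-side substitute, and your plan as written would only prove a strictly weaker $L^2$ bound than the one asserted in the theorem.
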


In the 2D case, we analogously have the following:

\begin{thm}[Existence and Uniqueness in 2D]\label{2Dthm}
	Let $f_{0}\in \dot{\mathcal{F}}^{1,1}\cap L^{2}$ satisfy the bound
	$$\|f_{0}\|_{\mathcal{\dot{F}}^{1,1}} < c(|A_{\mu}|)$$
	for a constant $c(|A_{\mu}|)$ depending on the Atwood number $A_{\mu}$. Then there exists a unique solution to (\ref{graph2d}-\ref{t2d}) with $f\in L^{\infty}(0,T;\mathcal{\dot{F}}^{1,1}\cap L^{2})\cap L^{1}(0,T;\dot{\mathcal{F}}^{2,1})$ such that $f(\alpha,0) = f_{0}(\alpha)$ and
	\begin{equation*}
	\|f\|_{L^2}(t)\leq \|f_0\|_{L^2},\quad \|f\|_{\mathcal{\dot{F}}^{1,1}}(t) + \sigma \int_{0}^{t}\|f\|_{\mathcal{\dot{F}}^{2,1}}(\tau)d\tau\leq \|f_{0}\|_{\mathcal{\dot{F}}^{1,1}} < c(|A_{\mu}|),
	\end{equation*}
	for a positive constant $\sigma$ depending on the initial profile $f_{0}(\alpha)$,
		\begin{multline*}
	\sigma(\|f_0\|_{\foneone})=-1+\nu+\frac{2\|f_0\|_{\foneone}^2 \left(3-\|f_0\|_{\foneone}^2\right)}{\left(1-\|f_0\|_{\foneone}^2\right)^2}\\
	\quad+A_\mu\frac{2\|f_0\|_{\foneone}\left(2A_\mu\|f_0\|_{\foneone}^5-6\|f_0\|_{\foneone}^4-8A_\mu\|f_0\|_{\foneone}^3+4\|f_0\|_{\foneone}^2-2A_\mu \|f_0\|_{\foneone}+2\right)}{\left(1-\|f_0\|_{\foneone}^2\right)^2\left(1-\|f_0\|_{\foneone}^2-2A_\mu \|f_0\|_{\foneone}\right)^2}
	\end{multline*}
	Computing the constant explicitly for $|A_\mu|=1$, we obtain $c(1)\approx 0.128267$.  
\end{thm}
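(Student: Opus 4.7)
The plan is to mirror the strategy used for Theorem \ref{Global}, adapted to the simpler one-dimensional setting and keeping track of the constants sharply enough to produce the explicit $\sigma$ of the statement. The core is an a priori differential inequality
\begin{equation*}
\frac{d}{dt}\|f\|_{\dot{\mathcal{F}}^{1,1}}+\sigma(\|f\|_{\dot{\mathcal{F}}^{1,1}})\|f\|_{\dot{\mathcal{F}}^{2,1}}\leq 0,
\end{equation*}
obtained from $f_t=-A_\rho\Lambda f+N_1(f)+N_2(f)$ by going to the Fourier side, multiplying by $|\xi|\,\overline{\hat f(\xi)}/|\hat f(\xi)|$ and integrating in $\xi$: the linear term contributes $-A_\rho\|f\|_{\dot{\mathcal{F}}^{2,1}}$, and the whole task reduces to bounding $\|N_1(f)\|_{\dot{\mathcal{F}}^{1,1}}+\|N_2(f)\|_{\dot{\mathcal{F}}^{1,1}}$ by $\|f\|_{\dot{\mathcal{F}}^{2,1}}$ times an explicit function of $\|f\|_{\dot{\mathcal{F}}^{1,1}}$ vanishing at $0$. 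Once this is in hand, defining $c(|A_\mu|)$ as the smallest positive root of $\sigma(x)=0$ makes the ball $\{\|f_0\|_{\dot{\mathcal{F}}^{1,1}}<c(|A_\mu|)\}$ forward-invariant and yields the stated integral estimate.

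The implicit equation \eqref{omega2d} is handled by Neumann inversion,
\begin{equation*}
\Omega=-2A_\rho(I-A_\mu\mathcal{D})^{-1}f=-2A_\rho\sum_{n=0}^{\infty}A_\mu^{n}\mathcal{D}^{n}(f),
\end{equation*}
convergent whenever $|A_\mu|$ times the operator norm of $\mathcal{D}$ on $\dot{\mathcal{F}}^{s,1}$ is strictly less than one. I would expand the geometric series $(1+\Delta_\beta f^2)^{-1}=\sum_{k\ge 0}(-1)^k(\Delta_\beta f)^{2k}$ inside \eqref{t2d}, and repeatedly apply the basic Fourier bounds
\begin{equation*}
\|\Delta_\beta g\|_{\mathcal{F}^{0,1}}\le\min\!\Big(\|g\|_{\dot{\mathcal{F}}^{1,1}},\tfrac{2}{|\beta|}\|g\|_{\mathcal{F}^{0,1}}\Big),\qquad \|gh\|_{\dot{\mathcal{F}}^{s,1}}\le\|g\|_{\mathcal{F}^{0,1}}\|h\|_{\dot{\mathcal{F}}^{s,1}}+\|h\|_{\mathcal{F}^{0,1}}\|g\|_{\dot{\mathcal{F}}^{s,1}}.
\end{equation*}
This yields estimates of the form $\|\mathcal{D}^{n}(f)\|_{\dot{\mathcal{F}}^{s,1}}\lesssim\|f\|_{\dot{\mathcal{F}}^{1,1}}^{2n}\|f\|_{\dot{\mathcal{F}}^{s,1}}$, whose geometric summation over $n$ and $k$ produces the denominators $(1-\|f\|_{\dot{\mathcal{F}}^{1,1}}^2)^{-1}$ and $(1-\|f\|_{\dot{\mathcal{F}}^{1,1}}^2-2A_\mu\|f\|_{\dot{\mathcal{F}}^{1,1}})^{-1}$ visible in $\sigma$.

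With these $\Omega$-bounds available, $N_2$ is estimated by the same Taylor expansion of the kernel, recognising the $\beta^{-1}$ factor as the Hilbert transform and organising terms so that exactly one factor of $\|f\|_{\dot{\mathcal{F}}^{2,1}}$ remains; this produces the purely geometric contribution $2\|f\|_{\dot{\mathcal{F}}^{1,1}}^2(3-\|f\|_{\dot{\mathcal{F}}^{1,1}}^2)/(1-\|f\|_{\dot{\mathcal{F}}^{1,1}}^2)^2$ in $\sigma$. For $N_1=\tfrac{A_\mu}{2}\Lambda\mathcal{D}(\Omega)$ the boundedness of the Riesz transform on $\mathcal{F}^{0,1}$ reduces matters to bounding $\mathcal{D}(\Omega)$ in $\dot{\mathcal{F}}^{2,1}$, yielding the second, $A_\mu$-weighted rational contribution. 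The $L^2$ bound follows from the standard energy identity, using the skew-symmetry of the Birkhoff-Rott integrals after an integration by parts. Existence is then obtained by regularising the equation (adding a vanishing parabolic term or mollifying the kernels), applying the uniform a priori bounds above to the regularised solutions, and passing to the limit via Aubin--Lions; uniqueness is proved by running the same estimates on the difference $f^{(1)}-f^{(2)}$ of two solutions in a slightly weaker norm (e.g.\ $\mathcal{F}^{0,1}$). The main obstacle I anticipate is the Neumann-series bookkeeping: to recover the precise rational function $\sigma$ of the statement, rather than a crude $1-C\|f\|_{\dot{\mathcal{F}}^{1,1}}$ condition, every power of $\|f\|_{\dot{\mathcal{F}}^{1,1}}$ and every factor of $A_\mu$ must be tracked, and only after all geometric sums are performed in closed form does the explicit value $c(1)\approx 0.128267$ emerge as the smallest positive zero of $\sigma$.
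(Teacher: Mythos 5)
Your plan tracks the paper's own strategy — the paper explicitly states that the 2D theorem is obtained by running the 3D scheme (a priori vorticity bounds, the differential inequality $\frac{d}{dt}\|f\|_{\dot{\mathcal{F}}^{1,1}_\nu}\le-\sigma\|f\|_{\dot{\mathcal{F}}^{2,1}_\nu}$, $L^2$ maximum principle, uniqueness in $\mathcal{F}^{0,1}$, and a mollified regularization with a limit passage) — so the overall architecture of your proposal is the same one the authors intend.

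Two places in your sketch are imprecise enough that they would prevent you from recovering the stated $\sigma$ exactly. First, the claim $\|\mathcal{D}^n(f)\|_{\dot{\mathcal{F}}^{s,1}}\lesssim\|f\|_{\dot{\mathcal{F}}^{1,1}}^{2n}\|f\|_{\dot{\mathcal{F}}^{s,1}}$ has the wrong power: each application of $\mathcal{D}$ (after the Taylor expansion of $(1+\Delta_\beta f^2)^{-1}$, which by itself accounts for the geometric factor $(1-\|f\|_{\dot{\mathcal{F}}^{1,1}}^2)^{-1}$) costs one extra factor of $\|f\|_{\dot{\mathcal{F}}^{1,1}}$, not two; this is precisely why the denominator of $\sigma$ contains $(1-\|f\|_{\dot{\mathcal{F}}^{1,1}}^2-2A_\mu\|f\|_{\dot{\mathcal{F}}^{1,1}})$, which has a \emph{linear} $A_\mu\|f\|$ contribution from the Neumann (or, in the paper's formulation, from solving the self-referential bound $\|\partial_\alpha\Omega\|\le A_\mu c\,\|\partial_\alpha\Omega\|+2A_\rho\|f\|_{\dot{\mathcal{F}}^{1,1}}$). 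With the $\|f\|^{2n}$ exponent you would instead produce a $(1-A_\mu\|f\|^2)$-type denominator and miss the target expression. Second, the $L^2$ bound is not obtained by any skew-symmetry of the Birkhoff--Rott operator at the interface; the paper proves $\|f\|_{L^2}(t)\le\|f_0\|_{L^2}$ by returning to the bulk formulation, writing $u=\nabla\phi$ with $\phi$ harmonic in each $D^i$, integrating $\phi\Delta\phi$ by parts over the two phases and using continuity of the pressure and of $u\cdot n$ across the interface. On the interface side, what the regularization section actually needs is the analytic $L^2_\nu$ bound of Theorem \ref{L2} (obtained by Fourier estimates and Gronwall), and then the Eulerian argument supplies the strict $L^2$ maximum principle for the limit. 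Beyond these two points your sketch matches the paper: the Neumann inversion versus the paper's closed norm inequality for $\|\partial_\alpha\Omega\|$ are two presentations of the same step, and Aubin--Lions versus the paper's direct Cauchy argument in $L^\infty(0,T;\mathcal{F}^{0,1})$ are interchangeable ways to pass to the limit.
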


As noted in the introductory section, in the 3D setting, when $A_{\mu} = 0$, the constant $k(0)$ matches the size of the initial data in the 2D without viscosity jump proven in \cite[Remark 5.4]{CCGRS16}, and therefore, improves the size of the initial data in the 3D case without viscosity jump given by \cite{CCGRS16}.

In the graph below the 3D constant $k(|A_\mu|)$ is pictured with respect to $|A_\mu|$. The maximum is $k_0\approx 0.362606$ and the minimum is $k(1)\approx0.080604$.  The graph in the Figure arises from estimating the size of initial data, $k(|A_\mu|)$, needed to satisfy the positivity condition \eqref{condition} of the high order rational polynomial given in the proof.

\begin{figure}[h]\centering
	\includegraphics[width=.7\textwidth]{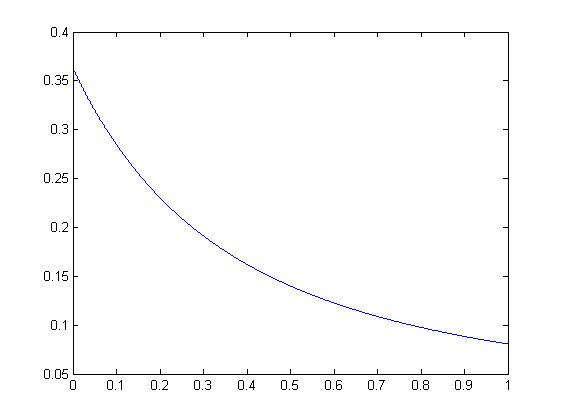}\vspace{-0.4cm}
	\caption{$k(|A_\mu|)$}\label{fig:pic}
\end{figure}

To prove Theorem \ref{Global} and in particular \eqref{globalineq}, we first need to prove apriori estimates on the vorticity and potential jump functions. These estimates on $\|\omega_{i}\|_{\dot{\mathcal{F}}^{s,1}}$ for $s=0,1$ are computed in Section \ref{secw}. The key point of the vorticity estimates is to demonstrate a bound on $\|\omega_{i}\|_{\dot{\mathcal{F}}^{s,1}}$ by a constant multiple of $\|f\|_{\dot{\mathcal{F}}^{s+1,1}}$, as $\omega_{i}(\alpha)$ is of similar regularity to $\nabla f(\alpha)$.

Next, we introduce the following norms with analytic weights:
\bea\label{analyticnorm}
\|f\|_{\mathcal{\dot{F}}^{s,p}_{\nu}}^{p}(t) \eqdef \int |\xi|^{sp}e^{p\nu t |\xi|}|\hat{f}(\xi,t)|^{p} d\xi, \quad \nu>0,\quad s\geq 0,\quad p\geq 1,
\eea
where we also denote $\|f\|_{\mathcal{\dot{F}}^{0,p}_{\nu}}=\|f\|_{\mathcal{F}^{0,p}_{\nu}}$.
In Section \ref{secanalytic}, we will use the vorticity estimates to prove uniform bounds on the analytic weighted quantity
$\|f\|_{\mathcal{\dot{F}}^{1,1}_{\nu}}(t)$:
\begin{thm}[Instant Analyticity]\label{AnalyticTheorem}
	Suppose $f(\alpha,t)$ is a solution to (\ref{decompnonlinear}-\ref{DOmegagraph}) in 3D  with initial data satisfying $\|f_{0}\|_{\mathcal{\dot{F}}^{1,1}} < k(|A_{\mu}|)$ or (\ref{graph2d}-\ref{t2d}) in 2D with initial data satisfying $\|f_{0}\|_{\mathcal{\dot{F}}^{1,1}} < c(|A_{\mu}|)$. Then there exist $\nu=\nu(\|f_{0}\|_{\mathcal{\dot{F}}^{1,1}})>0$ such that the evolution of the quantity $\|f\|_{\mathcal{\dot{F}}^{s,1}_{\nu}}$ satisfies the estimate
	\bea\label{instantanalyticity}
	\|f\|_{\mathcal{\dot{F}}^{1,1}_{\nu}}(t) + \sigma \int_{0}^{t}\|f\|_{\mathcal{\dot{F}}^{2,1}_{\nu}}(\tau)d\tau\leq \|f_{0}\|_{\mathcal{\dot{F}}^{1,1}} 
	\eea
	Furthermore, if $f_0\in L^2$ then
	\bea\label{gainofL2}\|f\|_{\mathcal{F}^{0,2}_{\nu}} \leq \|f_{0}\|_{L^{2}}\exp(R(\|f_{0}\|_{\mathcal{\dot{F}}^{1,1}})),
	\eea
	where $R$ is a positive rational polynomial.
\end{thm}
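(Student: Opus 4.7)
The plan is to reproduce the energy estimates that give Theorem \ref{Global} verbatim, but with the exponential weight $e^{\nu t|\xi|}$ inserted into the Wiener-type norm $\dot{\mathcal{F}}^{s,1}$. The only new ingredient needed is the subadditivity of $|\cdot|$ under convolution, namely $e^{\nu t|\xi|} \leq e^{\nu t|\xi-\eta|}e^{\nu t|\eta|}$, which lets every multilinear/convolution estimate already performed for the unweighted norms transfer directly to the weighted ones after replacing each factor $|\hat{f}(\xi)|$ by $e^{\nu t|\xi|}|\hat{f}(\xi)|$. First, I would differentiate $\|f\|_{\dot{\mathcal{F}}^{1,1}_\nu}$ in time using the Fourier side of \eqref{decompnonlinear} and the standard bound $\partial_t|\hat{f}| \leq \mathrm{Re}(\overline{\hat{f}}\,\partial_t\hat{f}/|\hat{f}|)$. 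The linear term $-A_\rho \Lambda f$ produces $-A_\rho\|f\|_{\dot{\mathcal{F}}^{2,1}_\nu}$ while the time derivative of the weight contributes $+\nu\|f\|_{\dot{\mathcal{F}}^{2,1}_\nu}$, so the total dissipation is $-(A_\rho-\nu)\|f\|_{\dot{\mathcal{F}}^{2,1}_\nu}$.

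Next, I would expand each nonlinear piece $N_1,N_2,N_3$ and the implicit operator $\md$ from \eqref{DOmegagraph} using the Taylor series $(1+x)^{2})^{-3/2}=\sum_n c_n x^n$ applied to $(\Delta_\beta f)^2$, converting all terms to multilinear convolutions of $f$, $\nabla f$, $\Omega$, and $\nabla\Omega$. The implicit identity \eqref{Omegagraph} is inverted through the Neumann series $\Omega=-2A_\rho(I-A_\mu\md)^{-1}f$, whose convergence in $\dot{\mathcal{F}}^{s,1}_\nu$ is proved exactly as in the unweighted case once each Fourier factor is multiplied by $e^{\nu t|\xi|}$. The outcome is a bound of the shape
\begin{equation*}
\frac{d}{dt}\|f\|_{\dot{\mathcal{F}}^{1,1}_\nu} \leq -\bigl(A_\rho-\nu-P(\|f\|_{\dot{\mathcal{F}}^{1,1}_\nu};A_\mu)\bigr)\|f\|_{\dot{\mathcal{F}}^{2,1}_\nu},
\end{equation*}
where $P$ is the same rational-polynomial combination that appears in the condition \eqref{condition} of the proof of Theorem \ref{Global}. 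Since the initial data satisfies $\|f_0\|_{\dot{\mathcal{F}}^{1,1}}<k(|A_\mu|)$ (resp.\ $c(|A_\mu|)$ in 2D), one chooses $\nu>0$ small enough that the bracket is still strictly positive at $t=0$; a standard continuation argument then keeps $\|f\|_{\dot{\mathcal{F}}^{1,1}_\nu}$ non-increasing for all time and yields \eqref{instantanalyticity}.

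For the $L^2$ gain \eqref{gainofL2}, I would multiply the Fourier transform of \eqref{decompnonlinear} by $e^{2\nu t|\xi|}\overline{\hat{f}(\xi,t)}$, integrate, and take real parts, obtaining
\begin{equation*}
\frac{1}{2}\frac{d}{dt}\|f\|_{\mathcal{F}^{0,2}_\nu}^2 + (A_\rho-\nu)\|f\|_{\dot{\mathcal{F}}^{1/2,2}_\nu}^2 = \mathrm{Re}\int e^{2\nu t|\xi|}\overline{\hat{f}(\xi)}\,\widehat{N(f)}(\xi)\,d\xi.
\end{equation*}
Schematically $N(f)$ is quadratic of the form $f\cdot\Lambda f$ plus higher-order terms, so Plancherel together with the convolution triangle inequality and the vorticity/Neumann-series bounds (again from Section \ref{secw}) gives the nonlinear term a clean bound by $C\|f\|_{\dot{\mathcal{F}}^{2,1}_\nu}\|f\|_{\mathcal{F}^{0,2}_\nu}^2$, where the scalar coefficient $C$ is a rational function of $\|f\|_{\dot{\mathcal{F}}^{1,1}_\nu}\leq\|f_0\|_{\dot{\mathcal{F}}^{1,1}}$. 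Gronwall's lemma then produces
\begin{equation*}
\|f\|_{\mathcal{F}^{0,2}_\nu}^2(t) \leq \|f_0\|_{L^2}^2\exp\!\Big(C\!\int_0^t\!\|f\|_{\dot{\mathcal{F}}^{2,1}_\nu}(\tau)d\tau\Big),
\end{equation*}
and the time integral is controlled by $\sigma^{-1}\|f_0\|_{\dot{\mathcal{F}}^{1,1}}$ via \eqref{instantanalyticity}, giving the rational-polynomial expression $R(\|f_0\|_{\dot{\mathcal{F}}^{1,1}})$.

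The main obstacle I anticipate is the bookkeeping in Step 2: because the vorticity $\omega$ is determined implicitly through $\md$ and the Neumann series for $(I-A_\mu\md)^{-1}$, one must verify that inserting the weight $e^{\nu t|\xi|}$ does not degrade the $\dot{\mathcal{F}}^{s,1}$ operator norm of $A_\mu\md$ (which is strictly less than one under the smallness assumption). Concretely, the delicate check is that the Taylor expansion of the geometric factor $(1+(\Delta_\beta f)^2)^{-3/2}$ produces, at each order, convolution integrals whose weighted estimates remain summable with the same quantitative bound as the unweighted ones, so that the positivity condition \eqref{condition} is preserved after the shift $A_\rho\mapsto A_\rho-\nu$. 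Once $\nu$ is chosen small enough in terms of $\|f_0\|_{\dot{\mathcal{F}}^{1,1}}$ relative to the distance to the threshold $k(|A_\mu|)$ (or $c(|A_\mu|)$), the estimate closes and analyticity is instantaneous.
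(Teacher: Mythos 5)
Your overall strategy matches the paper closely, and the $\dot{\mathcal{F}}^{1,1}_\nu$ part (Section~\ref{secanalytic}, Proposition~\ref{normdecreaseprop}) is essentially reproduced: differentiate in time, the weight contributes $+\nu\|f\|_{\dot{\mathcal{F}}^{2,1}_\nu}$, use the weighted triangle inequality $e^{\nu t|\xi|}\leq\prod e^{\nu t|\xi_j-\xi_{j+1}|}$ in each convolution, and reuse the Section~\ref{secw} vorticity bounds with $\dot{\mathcal{F}}^{1,1}$ replaced by $\dot{\mathcal{F}}^{1,1}_\nu$ (Remark~\ref{vorticityremark}). Shrinking $\nu$ to keep the bracket $\sigma$ positive is also what the paper does. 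That half is fine.

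The imprecision is in the $L^2_\nu$ estimate. You claim the nonlinear pairing $\mathrm{Re}\int e^{2\nu t|\xi|}\overline{\hat f}\,\widehat{N}$ admits a ``clean'' bound $C\|f\|_{\dot{\mathcal{F}}^{2,1}_\nu}\|f\|_{L^2_\nu}^2$ so that Gronwall closes directly. That bound is not directly achievable: each nonlinear term carries exactly one total derivative (e.g.\ the quadratic part of $N_1=\tfrac{A_\mu}{2}\Lambda\mathcal{D}(\Omega)$ paired with $\hat f$ gives a trilinear form with a single $|\xi|$), so Cauchy--Schwarz and Young's convolution inequality produce expressions of type $\|f\|_{\dot H^{1/2}_\nu}^2\,\|f\|_{\dot{\mathcal{F}}^{1,1}_\nu}^{k}$ and $\|f\|_{\dot H^{1/2}_\nu}\,\|f\|_{L^2_\nu}\,\|f\|_{\dot{\mathcal{F}}^{3/2,1}_\nu}\,\|f\|_{\dot{\mathcal{F}}^{1,1}_\nu}^{k}$, not a pure $\|f\|_{\dot{\mathcal{F}}^{2,1}_\nu}\|f\|_{L^2_\nu}^2$. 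The paper (Theorem~\ref{L2}) handles this by applying Young's inequality for products with parameters $\epsilon_n=\epsilon/n^3$ to split each cross term: the $\|f\|_{\dot H^{1/2}_\nu}^2$ pieces must be absorbed by the linear dissipation $(\nu-A_\rho)\|f\|_{\dot H^{1/2}_\nu}^2$ (which is only possible because $\|f\|_{\dot{\mathcal{F}}^{1,1}_\nu}$ is small), while the leftover $\frac{1}{\epsilon_n}\|f\|_{\dot{\mathcal{F}}^{3/2,1}_\nu}^2\|f\|_{L^2_\nu}^2$ pieces become the Gronwall factor. Only after the interpolation $\|f\|_{\dot{\mathcal{F}}^{3/2,1}_\nu}^2\leq\|f\|_{\dot{\mathcal{F}}^{1,1}_\nu}\|f\|_{\dot{\mathcal{F}}^{2,1}_\nu}$ does the integrand reduce to the $\|f_0\|_{\dot{\mathcal{F}}^{1,1}}\|f\|_{\dot{\mathcal{F}}^{2,1}_\nu}\|f\|_{L^2_\nu}^2$ form you want, and then \eqref{instantanalyticity} controls $\int_0^t\|f\|_{\dot{\mathcal{F}}^{2,1}_\nu}$. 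You do cite the interpolation at the end, so your conclusion is right, but the absorption step via $\epsilon$-Young (and the summability choice $\epsilon_n\sim n^{-3}$ so the geometric series still converges) is a genuine technical ingredient, not an optional detail: without it the estimate does not close, because the naive Young/Cauchy--Schwarz alternative gives a Gronwall coefficient $\|f\|_{\dot{\mathcal{F}}^{1,1}_\nu}$, which is bounded but not time-integrable.
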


Setting $\nu = 0$, we obtain the estimate \eqref{globalineq}. Following the instant analyticity argument, we present an $L^{2}$ maximum principle for the Muskat problem with viscosity jump in Section \ref{L2max}. Next, in Section \ref{uniquenesssection}, we give an argument for uniqueness of solutions in the space $\mathcal{F}^{0,1}$, noting that $\mathcal{F}^{0,1} \hookrightarrow L^{\infty}$. All of these apriori estimates finally allow us to perform a regularization argument in Section \ref{regularization}. In this argument, we perform an appropriate mollification of the interface evolution equation for $f(\alpha,t)$ and show that the regularized solutions $f^{\varepsilon_{n}}(\alpha,t)$ converge strongly to $f(\alpha,t)$ in $L^{2}(0,T; \dot{\mathcal{F}}^{1,1})$ and satisfy \eqref{globalineq}. Taking the limit $f^{\varepsilon_{n}}(\alpha,t) \longrightarrow f(\alpha,t)$, we establish the global wellposedness result of Theorem \ref{Global}.

In this paper, we also show analytic results in $L^{2}$ spaces in Section \ref{L2section}. Specifically, we prove uniform bounds on an analytic $L^{2}$ norm, as given by \eqref{gainofL2}
as well as
\bea\label{Hsnudecrease}
\frac{d}{dt}\|f\|_{\dot{\mathcal{F}}^{s,2}_{\nu}}^{2}(t) \leq - \sigma \|f\|_{\dot{\mathcal{F}}^{s+1/2,2}_{\nu}}^{2}
\eea
for $1/2\leq s \leq 3/2$. Note that in general, we denote $\dot{\mathcal{F}}^{s,2}_{\nu}$ by $\dot{H}^{s}_{\nu}$ for $s\neq 0$ and $\mathcal{F}^{0,2}_{\nu}$ by $L^{2}_{\nu}$ throughout the paper. We will use this $L^2$ estimate to show the $L^2$ decay and ill-posedness results later in the paper.

Given solutions with initial data as described in Theorem \ref{Global}, in Section \ref{DecaySection} we obtain large-time decay for solutions to the Muskat problem by using estimates similar to \eqref{instantanalyticity}, \eqref{gainofL2} and \eqref{Hsnudecrease}:

\begin{thm}[Sharp Decay Estimates]\label{DecayTheorem}
	Suppose $f(\alpha,t)$ is a solution to (\ref{decompnonlinear}-\ref{DOmegagraph})  with initial data satisfying $\|f_{0}\|_{\mathcal{\dot{F}}^{1,1}} < k(|A_{\mu}|)$ and $\|f_{0}\|_{L^{2}} < \infty$. Then for any $0 \leq s \leq 1$
	$$\|f\|_{\dot{\mathcal{F}}^{s,1}_{\nu}}(t) \leq C_{s}(1+t)^{-s-1+\lambda},$$
	for arbitrarily small $\lambda > 0$ and some nonnegative constant $C_{s}$ depending on the initial profile $f_{0}(\alpha)$ and the exponent $s$. Moreover, for any $T > 0$, there exists a constant $C_{T,s}$ depending on $f_{0}$, $T$ and $s$ such that
	$$\|f\|_{\dot{H}^{s}_{\nu}}(t) \leq C_{T,s} t^{-s},$$
	for $t > T$.
    In 2D, we have the following decay rate for solutions with initial data satisfying $\|f_0\|_{\foneone}<c(|A_\mu|)$ and $\|f_0\|_{L^2}<\infty$:
    \begin{equation*}
    \|f\|_{\dot{\mathcal{F}}^{s,1}_{\nu}}(t) \leq C_{s}(1+t)^{-s-1/2+\lambda}.
    \end{equation*}
	The $H^s_\nu$ decay rates in 2D are the same.

\end{thm}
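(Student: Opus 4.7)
The plan is a Schonbek-type Fourier frequency splitting argument, combining the parabolic dissipation already captured by Theorem \ref{AnalyticTheorem} and by \eqref{Hsnudecrease} with the time-uniform low-frequency control coming from the analytic $L^{2}$ bound \eqref{gainofL2}. High frequencies are absorbed by the dissipation while low frequencies are kept small by the $L^{2}_\nu$ estimate.

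First I would upgrade the analytic energy inequality \eqref{instantanalyticity} from $s=1$ to all intermediate $s\in[0,1]$, obtaining
$$
\frac{d}{dt}\|f\|_{\dot{\mathcal{F}}^{s,1}_\nu}(t) + \sigma_s\|f\|_{\dot{\mathcal{F}}^{s+1,1}_\nu}(t) \leq 0,
$$
with $\sigma_s>0$ depending on $\|f_0\|_{\dot{\mathcal{F}}^{1,1}}$ and $|A_\mu|$. The derivation mimics that of \eqref{instantanalyticity}, now with weight $|\xi|^{s}e^{\nu t|\xi|}$ in place of $|\xi|e^{\nu t|\xi|}$; the vorticity estimates of Section \ref{secw} together with the smallness of $\|f\|_{\dot{\mathcal{F}}^{1,1}_\nu}$ absorb the nonlinearities $N_1, N_2, N_3$ into the linear parabolic term. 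I would then split the dissipation at a scale $R>0$: Cauchy--Schwarz plus \eqref{gainofL2} yield
$$
\|f\|_{\dot{\mathcal{F}}^{s+1,1}_\nu} \geq R\|f\|_{\dot{\mathcal{F}}^{s,1}_\nu} - C R^{s+1+n/2}\|f\|_{L^2_\nu} \geq R\|f\|_{\dot{\mathcal{F}}^{s,1}_\nu} - C(f_0) R^{s+1+n/2},
$$
where $n=2$ in the 3D Muskat setting (interface over $\mathbb{R}^{2}$) and $n=1$ in 2D. Setting $M(t)=\|f\|_{\dot{\mathcal{F}}^{s,1}_\nu}(t)$, the dissipation inequality reads $M'(t) + \sigma_s R M(t) \leq C R^{s+1+n/2}$.

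Choosing $R(t)=A/(1+t)$ with $A$ large enough that $\sigma_s A>s+n/2$, multiplying by the integrating factor $(1+t)^{\sigma_s A}$, and integrating on $[0,t]$ produces
$$
\|f\|_{\dot{\mathcal{F}}^{s,1}_\nu}(t) \leq \|f_0\|_{\dot{\mathcal{F}}^{s,1}}(1+t)^{-\sigma_s A} + C_s(1+t)^{-s-n/2},
$$
which is the claimed rate $(1+t)^{-s-1+\lambda}$ in 3D and $(1+t)^{-s-1/2+\lambda}$ in 2D, the loss $\lambda>0$ accommodating the constants that grow when $A$ is enlarged. For the $\dot{H}^{s}_\nu$ bound on $t>T$, the argument is identical with $L^{2}$ dissipation \eqref{Hsnudecrease} and the analogous Bernstein-type bound $\|f\|_{\dot{H}^{s+1/2}_\nu}^{2} \geq R\bigl(\|f\|_{\dot{H}^{s}_\nu}^{2} - R^{2s}\|f\|_{L^{2}_\nu}^{2}\bigr)$; the instant regularization of Section \ref{L2section} places $f$ into $\dot{H}^{s+1/2}_\nu$ at time $T$, and an ODE argument with $R(t)=A/t$ integrated on $[T,t]$ delivers $t^{-s}$.

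The main obstacle is the fractional dissipation inequality of the first step: at the endpoints $s=0$ and $s=1$ the estimate is essentially in hand from \eqref{gainofL2} and \eqref{instantanalyticity}, but at intermediate $s$ one must redo each nonlinear estimate with the weight $|\xi|^{s}$, control the double layer potential $\mathcal{D}(\Omega)$ through the implicit relation \eqref{Omegagraph}, and verify that the same smallness threshold $k(|A_\mu|)$ (or $c(|A_\mu|)$ in 2D) keeps $\sigma_s$ uniformly positive across $s\in[0,1]$. Once this uniform dissipation is in place, the splitting and integrating-factor ODE produce the sharp algebraic decay rates.
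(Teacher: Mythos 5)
Your proof is correct and follows the same Schonbek-type Fourier-splitting strategy that the paper uses, packaged there as the ``Decay Lemma'' (Lemma~\ref{decaylemma}): both arguments combine the dissipation inequality $\frac{d}{dt}\|f\|_{\dot{\mathcal{F}}^{s,1}_\nu}\leq -\sigma_s\|f\|_{\dot{\mathcal{F}}^{s+1,1}_\nu}$ (which, as you note, requires extending the $s=1$ estimate to $s\in[0,1]$; the paper does this via Remark~\ref{vorticityremark}), a frequency split at $R(t)\sim (1+\delta t)^{-1}$, and an integrating factor. The only genuine difference is how the low-frequency contribution is bounded. The paper factors $|\xi|^{s_2-s_1}$ out of the low-frequency integral and then invokes a time-uniform bound on $\|f\|_{\dot{\mathcal{F}}^{s_1,1}_\nu}$ for $s_1\in(-1,0)$, which it obtains by H\"older from $H^r_\nu$ control (this is why the stated exponent carries a $\lambda$-loss: one must keep $s_1$ strictly above $-1$). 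You instead apply Cauchy--Schwarz directly against the $L^{2}_{\nu}$ bound \eqref{gainofL2}, so the low-frequency term is $CR^{s+1+n/2}\|f\|_{L^2_\nu}$. Since $L^2$ control is strictly stronger at low frequencies than $\dot{\mathcal{F}}^{-1+\lambda,1}$ control (for $R\to 0$, $R^{s+n/2}$ beats $R^{s+1-\lambda}$), this route is a bit cleaner and in fact yields the rate $(1+t)^{-s-n/2}$ with no $\lambda$-loss --- a mild sharpening of the stated estimate. Your $\dot{H}^{s}_\nu$ argument for $t>T$ (Bernstein split against $\|f\|_{L^2_\nu}^2$, using the instant analytic regularization to place $f$ in $\dot{H}^{s}_\nu$ at time $T$) likewise coincides in substance with the paper's $p=2$ application of the Decay Lemma, so the whole plan is sound.
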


\begin{remark}
	We call the decay rates in Theorem \ref{DecayTheorem} sharp for the following reason. If $f_{0}\in \dot{\mathcal{F}}^{1,1} \cap L^{2}$, then it can be seen that $f_{0} \in \dot{\mathcal{F}}^{s',1}$ for $-1<s' \leq 1$ but $f_{0}(\alpha)$ need not be in $\dot{\mathcal{F}}^{-1,1}$. If we consider the linearized contour equation with initial data $\|f_{0}\|_{\dot{\mathcal{F}}^{s',1}}$ for $ -1 <s' < 1$, then for any $s > s'$, we have the equivalence for the linear solutions
	$$\|f_{0}\|_{\dot{\mathcal{F}}^{s',1}} \approx \|t^{s-s'}\|e^{t\Lambda}f_{0}\|_{\dot{\mathcal{F}}^{s,1}}\|_{L^{\infty}_{t}}.$$
	This estimate yields, for example, the optimal rate of $t^{s'-1}$ for decay of $\|f\|_{\dot{\mathcal{F}}^{1,1}}$. Because we at most can guarantee that $\|f_{0}\|_{\dot{\mathcal{F}}^{s',1}} < +\infty$ for $-1 < s' \leq 1$ but not for any lower value of $s$, the decay rates above are sharp. Finally, for $\nu \neq 0$, since $\|f\|_{\dot{\mathcal{F}}^{s,1}} \leq \|f\|_{\dot{\mathcal{F}}^{s,1}_{\nu}}$, the rates are also sharp for the analytic weighted norms.
\end{remark}

In Section \ref{DecaySection}, we additionally note that for $f_{0}$ satisfying the conditions of Theorem \ref{DecayTheorem}, it immediately follows that the solution $f(\alpha,t)$ is in the spaces $\dot{\mathcal{F}}^{s,1}\cap \dot{H}^{s'}$ for any $s > -1$ and $s' \geq 0$. Moreover, due to the decay of the quantity $\|f\|_{\dot{\mathcal{F}}^{1,1}_{\nu}}$, we can show that there exists a constant $k_{s}$ and time $T_{s}$ depending on $s > 1$ and the initial profile $f_{0}$ such that
\bea\label{gainLinfty}
\|f\|_{\mathcal{\dot{F}}^{s,1}_{\nu}}(t) + \sigma_s \int_{T_s}^{t}\|f\|_{\mathcal{\dot{F}}^{s+1,1}_{\nu}}(\tau)d\tau\leq k_{s} 
\eea
for some $\nu > 0$ and for all $t > T_{s}$ for a time $T_{s}$ large enough and depending on $s$ and $f_{0}$. Therefore, we obtain decay rates for $t > T_{s}$:
$$\|f\|_{\mathcal{\dot{F}}^{s,1}_{\nu}}(t) \leq C_{s}t^{-s-1+\lambda}$$
analogously to Theorem \ref{DecayTheorem}. We can draw similar conclusions for the Sobolev norms with analytic weight such as $H^s_\nu$.

Finally, and importantly, we use the $L^{2}_{\nu}$ uniform bound  \eqref{gainofL2} to obtain an ill-posedness argument for the unstable regime of the Muskat problem in Section \ref{illposedsection}:

\begin{thm}[Ill-posedness]\label{IllPTheorem}
	For every $s>0$ and $\epsilon > 0$, there exist a solution $\tilde{f}$ to the unstable regime and $0< \delta < \epsilon$ such that $\|\tilde{f}\|_{H^{s}}(0)<\epsilon$ but $\|\tilde{f}\|_{H^{s}}(\delta) = \infty$.
\end{thm}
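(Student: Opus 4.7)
The plan is a time-reversal argument. The idea is to pick an initial datum $f_{0}$ that is small in $\dot{\mathcal{F}}^{1,1}\cap L^{2}$ yet not in $H^{s}$, run the stable Muskat flow forward from $f_{0}$ so that analyticity (Theorem~\ref{AnalyticTheorem}) smooths it into a small $H^{s}$ profile by some time $\delta$, and then reverse time to obtain an unstable-regime solution whose $H^{s}$ norm blows up precisely at $t=\delta$. For the datum, I would take $\hat f_{0}=\lambda\sum_{k\ge1}a_{k}\chi_{B(\xi_{k},r_{k})}$ in frequency space $\mathbb{R}^{d-1}$; in 3D, the choices $|\xi_{k}|=2^{k}$, $a_{k}=2^{k}$, $r_{k}=2^{-(1+s/2)k}$ give
\[
\|\hat f_{0}\|_{L^{2}}^{2}\sim\lambda^{2}\!\sum 2^{-sk},\qquad \int|\xi||\hat f_{0}|\,d\xi\sim\lambda\!\sum 2^{-sk},\qquad \int|\xi|^{2s}|\hat f_{0}|^{2}\,d\xi\sim\lambda^{2}\!\sum 2^{sk}=\infty,
\]
so the first two sums converge while the last diverges; the scaling $\lambda>0$ can be chosen arbitrarily small. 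An entirely analogous lacunary construction works in 2D.

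Applying Theorems~\ref{Global} and~\ref{AnalyticTheorem} to this $f_{0}$ in the stable regime ($A_{\rho}>0$) yields a global solution $f(\alpha,t)$. Combining the analytic $L^{2}$ estimate \eqref{gainofL2} with the elementary pointwise bound $(1+|\xi|^{2})^{s}\le C_{s}(\nu t)^{-2s}e^{2\nu t|\xi|}$ yields, for every $\delta>0$,
\[
\|f(\cdot,\delta)\|_{H^{s}}\le C_{s}(\nu\delta)^{-s}\|f_{0}\|_{L^{2}}\exp\!\big(R(\|f_{0}\|_{\dot{\mathcal{F}}^{1,1}})\big).
\]
Given $\epsilon>0$, I would first pick $\delta<\epsilon$ and then shrink $\lambda$ so that the right-hand side is less than $\epsilon$. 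Next, set $\tilde f(\alpha,t)\eqdef f(\alpha,\delta-t)$ on $[0,\delta]$. Since $\mathcal{D}(\cdot)$ is linear in its argument, the implicit relation \eqref{Omegagraph} shows that flipping $A_{\rho}\mapsto-A_{\rho}$ with $f$ fixed sends $\Omega\mapsto-\Omega$; each of $N_{1},N_{2},N_{3}$ in \eqref{N1N2N3} is odd in $\Omega$ (directly for $N_{2}$, via $\mathcal{D}(\Omega)$ for $N_{1}$ and $N_{3}$), so $N(f;A_{\mu},-A_{\rho})=-N(f;A_{\mu},A_{\rho})$. Consequently
\[
\partial_{t}\tilde f=A_{\rho}\Lambda\tilde f-N(\tilde f;A_{\mu},A_{\rho})=-(-A_{\rho})\Lambda\tilde f+N(\tilde f;A_{\mu},-A_{\rho}),
\]
which is exactly \eqref{decompnonlinear} with Atwood number $-A_{\rho}<0$, i.e., the unstable Muskat equation. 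By construction $\|\tilde f(\cdot,0)\|_{H^{s}}=\|f(\cdot,\delta)\|_{H^{s}}<\epsilon$ and $\|\tilde f(\cdot,\delta)\|_{H^{s}}=\|f_{0}\|_{H^{s}}=\infty$, as required.

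The only delicate point I foresee is the lacunary construction: one must arrange a single datum that is simultaneously small in both $\dot{\mathcal{F}}^{1,1}$ and $L^{2}$ (both enter the hypotheses of Theorems~\ref{Global} and~\ref{AnalyticTheorem} and appear in the quantitative $H^{s}$ bound above) while having infinite $H^{s}$ norm for the prescribed $s>0$; the Fourier-balls choice above achieves this for any $s>0$. Once past that step, the argument is essentially algebraic: the analytic gain \eqref{gainofL2} supplies the smallness of $\|f(\cdot,\delta)\|_{H^{s}}$, and the $\Omega\mapsto-\Omega$ symmetry of the Muskat nonlinearity supplies the time-reversibility that turns the smoothing stable solution into the desired pathological unstable one.
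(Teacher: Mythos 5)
Your argument is correct and follows essentially the same route as the paper: construct a datum small in $L^{2}\cap\dot{\mathcal{F}}^{1,1}$ but outside $H^{s}$, smooth it via the analytic $L^{2}_{\nu}$ bound \eqref{gainofL2}, and reverse time; the paper's counterexample uses radial annuli rather than your lacunary Fourier balls, but the two constructions are equivalent. You in fact supply more detail than the paper at the time-reversal step, where the paper simply asserts that $f(x,-t+\delta)$ solves the unstable problem while you verify it via the $A_{\rho}\mapsto-A_{\rho}$, $\Omega\mapsto-\Omega$ symmetry of \eqref{Omegagraph}--\eqref{N1N2N3}.
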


This ill-posedness result is very significant because we show instantaneous blow-up of solutions in very low regularity spaces.

We note here that this paper explains the full proof of Theorem \ref{Global} and the other theorems in 3D.  The proof of Theorem \ref{2Dthm} and the other 2D results follow similarly, the 2D results are  actually easier in several places, and for that reason we do not rewrite the 2D proofs in this paper.

\section{A Priori Estimates on $\omega$}\label{secw}

In this section, we will prove the necessary estimates on $\|\omega_{i}\|_{\dot{\mathcal{F}}^{s,1}}$ for $ s = 0,1$ and $i=1,2,3$. These estimates will be used later to prove the bound \eqref{instantanalyticity} on the evolution of a solution in $\|f\|_{\dot{\mathcal{F}}^{1,1}_\nu}$. We first show that $\|\omega_{i}\|_{\mathcal{F}^{0,1}}$ is bounded by quantities depending only on the characteristics of the fluids and $\|f\|_{\dot{\mathcal{F}}^{1,1}}$. Then, using the estimates on $\|\omega_{i}\|_{\mathcal{F}^{0,1}}$, we further show that the quantities $\|\omega_{i}\|_{\dot{\mathcal{F}}^{1,1}}$ for $i=1,2,3$ are linearly bounded by $\|f\|_{\dot{\mathcal{F}}^{2,1}}$ with the linear constant depending on $\|f\|_{\dot{\mathcal{F}}^{1,1}}$.

\begin{prop}\label{f01omega}
	Given the constants $S_1$, $C_1$, $C_2$ depending on $A_\mu$, $\|f\|_{\dot{\mathcal{F}}^{1,1}}$ that are defined by
	\begin{equation}\label{C1C2}
	\begin{aligned}
	S_1=\frac{\yo}{1-\yo^2}, \hspace{0.5cm}C_1=\frac{1-A_\mu S_1}{1-5A_\mu S_1},\hspace{0.5cm}
	C_2=\frac{C_1}{(1-2A_\mu S_1)(1-\yo^2)},
	\end{aligned}
	\end{equation}
	we have the following estimates
	\begin{equation}
	\label{partialOmegaineq1}
	\|\omega_{1}\|_{\mathcal{F}^{0,1}}  = \|\partial_{\alpha_{2}}\Omega\|_{\mathcal{F}^{0,1}}  \leq  2C_1A_\rho  \|f\|_{\dot{\mathcal{F}}^{1,1}} ,
	\end{equation}
	\begin{equation}
	\label{partialOmegaineq2}
	\|\omega_{2}\|_{\mathcal{F}^{0,1}}  = \|\partial_{\alpha_{1}}\Omega\|_{\mathcal{F}^{0,1}}  \leq 2C_1A_\rho  \|f\|_{\dot{\mathcal{F}}^{1,1}},
	\end{equation}
	and
	\begin{equation}
	\begin{aligned}
	\label{om3est}
	\|\omega_{3}\|_{\mathcal{F}^{0,1}} &\leq  12A_\mu A_\rho C_2 \|f\|_{\dot{\mathcal{F}}^{1,1}}^3,\\
	\|\partial_{\alpha_i}\md\|_{\fzerone}&\leq 6 A_\rho C_2 \yo^2,\hspace{0.5cm}i=1,2.
	\end{aligned}
	\end{equation}
	For the potential jump function $\Omega$, we moreover have the estimate
	\begin{equation}
	\label{OmegaF11}
	\|\Omega\|_{\dot{\mathcal{F}}^{1,1}} \leq 2A_{\rho}B_{1}\|f\|_{\foneone},
	\end{equation}
	where
	\begin{equation}\label{B1}
	B_{1} = \frac{1-2A_{\mu}S_{1}}{1-8A_{\mu}S_{1}}.
	\end{equation}
\end{prop}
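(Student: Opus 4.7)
The strategy rests on three analytic ingredients for the $\mathcal{F}^{0,1}$ scale: the algebra property $\|fg\|_{\mathcal{F}^{0,1}}\le \|f\|_{\mathcal{F}^{0,1}}\|g\|_{\mathcal{F}^{0,1}}$, the $\mathcal{F}^{0,1}$-boundedness of the Riesz transforms, and the uniform (in $\beta$) bound $\|\Delta_\beta f(\cdot)\|_{\mathcal{F}^{0,1}}\le \|f\|_{\dot{\mathcal{F}}^{1,1}}$ that follows from $|1-e^{-i\beta\cdot\xi}|\le |\beta||\xi|$ on the Fourier side. Together with the Taylor series $(1+x^2)^{-3/2}=\sum_{n\ge 0}(-1)^n b_n x^{2n}$, every principal value integral appearing in the $BR_j$ of \eqref{br1}--\eqref{br3} and in $\mathcal{D}(\Omega)$ of \eqref{DOmegagraph} can be expanded term by term into a convergent sum of operators of the form $R_{j_1}\cdots R_{j_m}[(\Delta f)^{k}\,\omega_\ell]$ whose $\mathcal{F}^{0,1}$ norm is controlled by $\|f\|_{\dot{\mathcal{F}}^{1,1}}^{k}\,\|\omega_\ell\|_{\mathcal{F}^{0,1}}$. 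Summing the resulting geometric series is what produces the common factor $S_1=\|f\|_{\dot{\mathcal{F}}^{1,1}}/(1-\|f\|_{\dot{\mathcal{F}}^{1,1}}^2)$ that governs every subsequent estimate.

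Applying this machinery to \eqref{br1}--\eqref{br3}, I would first establish
\[
\|BR_j(f,\omega)\|_{\mathcal{F}^{0,1}}\le C(S_1)\,\bigl(\|\omega_1\|_{\mathcal{F}^{0,1}}+\|\omega_2\|_{\mathcal{F}^{0,1}}+\|\omega_3\|_{\mathcal{F}^{0,1}}\bigr),\quad j=1,2,3,
\]
with $C$ a rational function of $S_1$. Inserting this into \eqref{partialalphaOmega} and using the algebra property on the right-hand side gives a closed implicit inequality for $\|\omega_1\|_{\mathcal{F}^{0,1}}+\|\omega_2\|_{\mathcal{F}^{0,1}}$, since $\omega_1=\partial_{\alpha_2}\Omega$ and $\omega_2=-\partial_{\alpha_1}\Omega$ by \eqref{omegai}; the term $\|\omega_3\|_{\mathcal{F}^{0,1}}$ appearing on the right is dominated by $(\|\omega_1\|_{\mathcal{F}^{0,1}}+\|\omega_2\|_{\mathcal{F}^{0,1}})\|f\|_{\dot{\mathcal{F}}^{1,1}}$ via $\omega_3=\partial_{\alpha_2}\Omega\,\partial_{\alpha_1}f-\partial_{\alpha_1}\Omega\,\partial_{\alpha_2}f$. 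Solving the resulting inequality yields the factor $C_1=(1-A_\mu S_1)/(1-5A_\mu S_1)$, valid as long as $1-5A_\mu S_1>0$, producing \eqref{partialOmegaineq1}--\eqref{partialOmegaineq2}.

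For the improved bound on $\omega_3$ in \eqref{om3est}, the key observation is that differentiating \eqref{Omegagraph} gives $\partial_{\alpha_i}\Omega=A_\mu\partial_{\alpha_i}\mathcal{D}(\Omega)-2A_\rho\partial_{\alpha_i}f$; substituting this into $\omega_3=\partial_{\alpha_2}\Omega\,\partial_{\alpha_1}f-\partial_{\alpha_1}\Omega\,\partial_{\alpha_2}f$ causes the symmetric $-2A_\rho\partial_{\alpha_1}f\,\partial_{\alpha_2}f$ terms to cancel and leaves
\[
\omega_3=A_\mu\bigl(\partial_{\alpha_2}\mathcal{D}(\Omega)\,\partial_{\alpha_1}f-\partial_{\alpha_1}\mathcal{D}(\Omega)\,\partial_{\alpha_2}f\bigr),
\]
so the algebra property gives $\|\omega_3\|_{\mathcal{F}^{0,1}}$ in terms of $\|\partial_{\alpha_i}\mathcal{D}(\Omega)\|_{\mathcal{F}^{0,1}}$ and $\|f\|_{\dot{\mathcal{F}}^{1,1}}$, explaining both the extra power of $\|f\|_{\dot{\mathcal{F}}^{1,1}}$ and the $A_\mu$ factor in the claimed estimate. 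The bound on $\|\partial_{\alpha_i}\mathcal{D}(\Omega)\|_{\mathcal{F}^{0,1}}$ then follows from \eqref{partialD} together with the $BR_j$ estimate above and \eqref{partialOmegaineq1}--\eqref{partialOmegaineq2}, producing $C_2$ after one more geometric sum. Finally, \eqref{OmegaF11} is obtained by multiplying \eqref{Omegagraph} by $|\xi|$ on the Fourier side, which yields $\|\Omega\|_{\dot{\mathcal{F}}^{1,1}}\le A_\mu\|\mathcal{D}(\Omega)\|_{\dot{\mathcal{F}}^{1,1}}+2A_\rho\|f\|_{\dot{\mathcal{F}}^{1,1}}$, combined with an analogous series expansion of $\mathcal{D}(\Omega)$ directly in $\dot{\mathcal{F}}^{1,1}$; the resulting implicit inequality inverts to the factor $B_1=(1-2A_\mu S_1)/(1-8A_\mu S_1)$.

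The most delicate part will be the combinatorial bookkeeping of the numerical constants $5$, $2$, $8$ appearing in the denominators of $C_1$, $C_2$, and $B_1$: they are determined by exactly how many copies of the algebra property and the Riesz-transform bound are needed when expanding each p.v.\ integral, together with the interaction between $\partial_{\alpha_i}\mathcal{D}(\Omega)$ and the implicit equation for $\Omega$. The smallness requirement hidden in $\|f\|_{\dot{\mathcal{F}}^{1,1}}<k(|A_\mu|)$ comes precisely from demanding positivity of all four denominators $1-5A_\mu S_1$, $1-2A_\mu S_1$, $1-8A_\mu S_1$, and $1-\|f\|_{\dot{\mathcal{F}}^{1,1}}^2$. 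Sharpening these constants so that they match the statement exactly, rather than merely producing \emph{some} rational function of $S_1$, is the main challenge.
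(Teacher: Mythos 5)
Your high-level roadmap matches the paper's: Taylor-expand $(1+x^2)^{-3/2}$, bound each $BR_j$ in terms of the $\|\omega_i\|_{\mathcal{F}^{0,1}}$ and $S_1$, feed the result into the implicit relations coming from \eqref{Omegagraph} and \eqref{partialD}, use $\omega_3=A_\mu(\partial_{\alpha_2}\mathcal D\,\partial_{\alpha_1}f-\partial_{\alpha_1}\mathcal D\,\partial_{\alpha_2}f)$ to gain an extra $A_\mu\|f\|_{\dot{\mathcal F}^{1,1}}$, and invert the linear systems to produce $C_1$, $C_2$, and $B_1$. The closure of the coupled inequalities for $\|\partial_{\alpha_i}\Omega\|_{\mathcal F^{0,1}}$ and $\|\partial_{\alpha_i}\mathcal D\|_{\mathcal F^{0,1}}$ via $S_2=S_1/(1-A_\mu S_1)$ is exactly what you describe.

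However, the way you propose to control each $BR_j$ term has a genuine gap. You assert that the $\mathcal F^{0,1}$ algebra property together with the uniform-in-$\beta$ bound $\|\Delta_\beta f\|_{\mathcal F^{0,1}}\le\|f\|_{\dot{\mathcal F}^{1,1}}$ and the boundedness of Riesz transforms lets you estimate each Taylor term and sum a geometric series. That does not work directly. After expanding, the $n$-th term of $BR_{11}$ is
\[
\text{p.v.}\int_{\mathbb R^2}\frac{\beta_2}{|\beta|^3}\,(\Delta_\beta f(\alpha))^{2n}\,\omega_3(\alpha-\beta)\,d\beta,
\]
and since the factor $(\Delta_\beta f(\alpha))^{2n}$ depends on the integration variable $\beta$, this is not a product of Riesz transforms with local multiplications. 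If you take $\mathcal F^{0,1}$ norms of the integrand uniformly in $\beta$ and then integrate the kernel $|\beta|^{-2}$, the $\beta$-integral diverges; and even if you somehow controlled it, the Taylor coefficients $a_n=\tfrac{(2n+1)!}{(2^n n!)^2}$ would survive, yielding a $(1-\|f\|_{\dot{\mathcal F}^{1,1}}^2)^{-3/2}$-type factor rather than the $S_1=\|f\|_{\dot{\mathcal F}^{1,1}}/(1-\|f\|_{\dot{\mathcal F}^{1,1}}^2)$ factor that drives all the constants in the statement. The paper's proof of the key Fourier bound
\[
|\widehat{BR_{11}}(\xi)|\le\tfrac12\sum_{n\ge0}\bigl((\ast^{2n}|\cdot||\hat f(\cdot)|)\ast|\widehat{\omega}_3(\cdot)|\bigr)(\xi)
\]
requires the full singular-integral machinery that you omit: symmetrizing the p.v. by averaging $\beta\mapsto-\beta$, passing to polar coordinates, writing $m(\xi,r,u)=iu\cdot\xi\int_0^1 e^{-ir(1-s)u\cdot\xi}\,ds$ so the radial integral becomes a bounded sign integral, and then the sharp angular estimate $\int_{-\pi}^{\pi}|\sin\theta|\prod_{j=1}^{2n}|\sin(\theta+\alpha_j)|\,d\theta\le\int_{-\pi}^{\pi}|\sin\theta|^{2n+1}\,d\theta=4/a_n$, which exactly cancels the Taylor coefficient $a_n$. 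Without this cancellation you would neither make the series converge in the correct regime nor recover the stated constants $1-5A_\mu S_1$, $1-2A_\mu S_1$, $1-8A_\mu S_1$. So the overall architecture is right, but the central analytic lemma cannot be reduced to the algebra property and Riesz boundedness as you claim, and supplying that lemma is the actual content of the proof.
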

\begin{proof}
	First, by formulas \eqref{omegai} and \eqref{Omegagraph} we have that 
	\begin{equation}\label{boundomega12}
	\|\omega_1\|_{\mathcal{F}^{0,1}}=\|\partial_{\alpha_2}\Omega\|_{\mathcal{F}^{0,1}}, \hspace{1cm} \|\omega_2\|_{\fzerone}=\|\partial_{\alpha_1}\Omega\|_{\mathcal{F}^{0,1}},
	\end{equation}
	and 
	\begin{equation}\label{boundomega3}
	\begin{aligned}
	\|\omega_3\|_{\mathcal{F}^{0,1}}&=\|\partial_{\alpha_2}\Omega \partial_{\alpha_1}f-\partial_{\alpha_1}\Omega \partial_{\alpha_2}f\|_{\mathcal{F}^{0,1}}=A_\mu\|\partial_{\alpha_2}\mathcal{D} \partial_{\alpha_1}f-\partial_{\alpha_1}\mathcal{D} \partial_{\alpha_2}f\|_{\mathcal{F}^{0,1}}\\
	&\leq A_\mu\|f\|_{\foneone}\left(\|\partial_{\alpha_1}\mathcal{D}\|_{\fzerone}+\|\partial_{\alpha_2}\mathcal{D}\|_{\fzerone}\right),
	\end{aligned}
	\end{equation}
	so it suffices to bound the quantities $\|\partial_{\alpha_{i}}\Omega\|_{\mathcal{F}^{0,1}}$ and $\|\partial_{\alpha_{i}}\md\|_{\mathcal{F}^{0,1}}$ for $i = 1,2$. 
	Notice that from \eqref{Omegagraph} and \eqref{partialD} we have that
	\begin{equation}\label{Df01}
	\begin{aligned}
	\|\partial_{\alpha_i}\Omega\|_{\fzerone}&\leq A_\mu \|\partial_{\alpha_i}\md\|_{\fzerone}+2A_\rho \|f\|_{\foneone},\\
	\|\partial_{\alpha_1}\md\|_{\fzerone}& \leq 2 \|BR_1\|_{\fzerone}+2 \|BR_3\pone f\|_{\fzerone},\\
	\|\partial_{\alpha_2}\md\|_{\fzerone}& \leq 2 \|BR_2\|_{\fzerone}+2 \|BR_3\ptwo f\|_{\fzerone}.
	\end{aligned}
	\end{equation}
	Thus, we proceed to bound the terms $\|BR_1\|_{\fzerone}$, $\|BR_2\|_{\fzerone}$ and $\|BR_3\|_{\fzerone}$, given by \eqref{br1}, \eqref{br2} and \eqref{br3}. 
	We start with the term $\|BR_{1}\|_{\mathcal{F}^{0,1}}$. 
	We first need to bound the Fourier transform of the Birkhoff-Rott integral terms. For the first term in $BR_{1}$,
	\bea\label{br11}
	BR_{11}(f)(\alpha) =  \frac{-1}{4\pi}\text{p.v.}\int_{\mathbb{R}^{2}}\frac{\frac{\beta_{2}}{|\beta|}\omega_{3}(\alpha-\beta)}{(1+\Delta_{\beta}f(\alpha)^{2})^{\frac{3}{2}}} \frac{d\beta}{|\beta|^{2}},
	\eea
	we first apply a change of variables in $\beta$.
	$$ BR_{11}(f)(\alpha) = \frac{1}{4\pi}\text{p.v.}\int_{\mathbb{R}^{2}}\frac{\frac{\beta_{2}}{|\beta|}\omega_{3}(\alpha+\beta)}{(1+\Delta_{-\beta}f(\alpha)^{2})^{\frac{3}{2}}} \frac{d\beta}{|\beta|^{2}}.$$
	Hence, 
	$$BR_{11}(f)(\alpha) = \frac{-1}{8\pi}\Big(\text{p.v.}\int_{\mathbb{R}^{2}}\frac{\frac{\beta_{2}}{|\beta|}\omega_{3}(\alpha-\beta)}{(1+\Delta_{\beta}f(\alpha)^{2})^{\frac{3}{2}}} \frac{d\beta}{|\beta|^{2}} - \text{p.v.}\int_{\mathbb{R}^{2}}\frac{\frac{\beta_{2}}{|\beta|}\omega_{3}(\alpha+\beta)}{(1+\Delta_{-\beta}f(\alpha)^{2})^{\frac{3}{2}}} \frac{d\beta}{|\beta|^{2}}\Big).$$
	By using the Taylor series expansion for the denominator, given by
	\begin{equation*}
	\frac{1}{(1+x^{2})^{\frac{3}{2}}} = \sum_{n=0}^{\infty}a_n (-1)^{n}x^{2n},\quad\mbox{where}\quad  a_n= \frac{(2n+1)!}{(2^{n}n!)^{2}},
	\end{equation*}
	we obtain that
	\begin{multline*}
	BR_{11}(f)(\alpha) = \frac{-1}{8\pi}\sum_{n\geq 0} (-1)^{n} a_{n}\int_{\mathbb{R}^{2}}\!\Big(\frac{\beta_{2}}{|\beta|}\omega_{3}(\alpha-\beta)\Delta_{\beta}f(\alpha)^{2n}\\ - \frac{\beta_{2}}{|\beta|}\omega_{3}(\alpha+\beta)\Delta_{-\beta}f(\alpha)^{2n} \Big)\frac{d\beta}{|\beta|^{2}}.
	\end{multline*}
	Applying the Fourier transform, the products are transformed to convolutions:
	\begin{multline*}
	\widehat{BR_{11}}(\xi) = \frac{-1}{8\pi}\sum_{n\geq 0} (-1)^{n} a_{n}\int_{\mathbb{R}^{2}}\!\frac{\beta_{2}}{|\beta|}\Big(\hat{\omega_{3}}(\xi)e^{-i\beta\cdot\xi}\ast \big(\ast^{2n}\hat{f}(\xi)m(\xi,\beta)\big)\\ -\hat{\omega_{3}}(\xi)e^{i\beta\cdot\xi}\ast \big(\ast^{2n}\hat{f}(\xi)m(\xi,-\beta)\big) \Big)\frac{d\beta}{|\beta|^{2}},
	\end{multline*}
	where $$ m(\xi,\beta) = \frac{1-e^{-i\beta\dot\xi}}{|\beta|}.$$
	Writing the integral in polar coordinates with $\beta = r u $ and $ u = (\cos(\theta),\sin(\theta))$,
	\begin{multline*}
	\widehat{BR_{11}}(\xi) = \frac{-1}{8\pi}\sum_{n\geq 0} (-1)^{n} a_{n}\int_{-\pi}^{\pi}\int_{0}^{\infty}\!\sin(\theta)\Big(\hat{\omega_{3}}(\xi)e^{-ir u\cdot\xi}\ast \big(\ast^{2n}\hat{f}(\xi)m(\xi,r,u)\big)\\ -\hat{\omega_{3}}(\xi)e^{ir u\cdot\xi}\ast \big(\ast^{2n}\hat{f}(\xi)m(\xi,r,-u)\big) \Big)\frac{dr}{r}d\theta.
	\end{multline*}
	By a change of variables in the radial variable,
	
	\begin{multline*}
	\widehat{BR_{11}}(\xi) = \frac{-1}{8\pi}\sum_{n\geq 0} (-1)^{n} a_{n}\int_{-\pi}^{\pi}\int_{0}^{-\infty}\!\sin(\theta)\Big(\hat{\omega_{3}}(\xi)e^{ir u\cdot\xi}\ast \big(\ast^{2n}\hat{f}(\xi)m(\xi,-r,u)\big)\\ -\hat{\omega_{3}}(\xi)e^{-ir u\cdot\xi}\ast \big(\ast^{2n}\hat{f}(\xi)m(\xi,-r,-u)\big) \Big)\frac{-dr}{-r}d\theta.
	\end{multline*}
	Note that $m(\xi,-r,u) =- m(\xi,r,-u)$, 
	and hence, we obtain
	\begin{multline*}
	\widehat{BR_{11}}(\xi) = \frac{-1}{8\pi}\sum_{n\geq 0} (-1)^{n} a_{n}\int_{-\pi}^{\pi}\int_{-\infty}^{0}\!\sin(\theta)\Big(\hat{\omega_{3}}(\xi)e^{-ir u\cdot\xi}\ast \big(\ast^{2n}\hat{f}(\xi)m(\xi,r,u)\big)\\ -\hat{\omega_{3}}(\xi)e^{ir u\cdot\xi}\ast \big(\ast^{2n}\hat{f}(\xi)m(\xi,r,-u)\big) \Big)\frac{dr}{r}d\theta.
	\end{multline*}
	Thus, adding the upper and lower integrals together we obtain 
	\begin{multline*}
	\widehat{BR_{11}}(\xi) = \frac{-1}{16\pi}\sum_{n\geq 0} (-1)^{n} a_{n}\int_{-\pi}^{\pi}\int_{-\infty}^{\infty}\!\sin(\theta)\Big(\hat{\omega_{3}}(\xi)e^{-ir u\cdot\xi}\ast \big(\ast^{2n}\hat{f}(\xi)m(\xi,r,u)\big)\\ -\hat{\omega_{3}}(\xi)e^{ir u\cdot\xi}\ast \big(\ast^{2n}\hat{f}(\xi)m(\xi,r,-u)\big) \Big)\frac{dr}{r}d\theta.
	\end{multline*}
	Writing out of the convolutions in integral form and using the equality
	$$m(\xi, r, u) = iu\cdot\xi\int_{0}^{1}e^{-ir(1-s)u\cdot\xi} ds,$$ we obtain that
	\begin{multline*}
	\widehat{BR_{11}}(\xi)\\ = \frac{-1}{16\pi}\sum_{n\geq 0}  a_{n}\int_{\mathbb{R}^{2}}\cdots\int_{\mathbb{R}^{2}}\int_{-\pi}^{\pi}\sin(\theta)\hat{\omega_{3}}(\xi-\xi_{1})\prod_{j=1}^{2n-1}(u\cdot (\xi_{j}-\xi_{j+1}))(u\cdot \xi_{2n})\hat{f}(\xi_{j}-\xi_{j+1})\\\int_{0}^{1}\cdots\int_{0}^{1} \int_{-\infty}^{\infty}\Big(e^{-iAr} - e^{iAr} \Big)\frac{dr}{r} ds_{1}\cdots ds_{2n} d\theta d\xi_{1}\cdots d\xi_{2n+1}.
	\end{multline*}
	where $$A = u\cdot (\xi-\xi_{1}) + \sum_{j=1}^{2n-1}(1-s_{j})u\cdot(\xi_{j}-\xi_{j+1}) + u\cdot \xi_{2n}.$$
	Next, notice that
	\begin{multline*}
	\Big|\int_{0}^{1}\cdots\int_{0}^{1} \int_{-\infty}^{\infty}\Big(e^{-iAr} - e^{iAr} \Big)\frac{dr}{r} ds_{1}\cdots ds_{2n}\Big|\\
	\leq \pi\Big|\int_{0}^{1}\cdots\int_{0}^{1} sgn(A)-sgn(-A) ds_{1}\cdots ds_{2n}\Big|
	\leq 2\pi.
	\end{multline*}
	Moreover, if $\xi = (\xi^{(1)},\xi^{(2)})$, then
	$$|u\cdot \xi| = |\cos(\theta)\xi^{(1)}+\sin(\theta)\xi^{(2)}| = |\xi||\sin(\theta+\alpha)|,$$
	where $\alpha$ satisfies $\sin(\alpha) = \xi^{(1)}/|\xi|$, and therefore, $\cos(\alpha) = \xi^{(2)}/|\xi|$. Using these estimates,
	$$
	|\widehat{BR_{11}}|(\xi) \leq \frac{1}{8}\sum_{n\geq 0}  a_{n}\Big( (\ast^{2n}|\cdot||\hat{f}(\cdot)|)\ast |\hat{\omega_{3}}(\cdot)|\Big)(\xi)\int_{-\pi}^{\pi} |\sin(\theta)|\prod_{j=1}^{2n}|\sin(\theta + \alpha_{j})| d\theta$$
	for some angles $\alpha_{j}$. Finally, note that
	$$\int_{-\pi}^{\pi} |\sin(\theta)|\prod_{j=1}^{2n}|\sin(\theta + \alpha_{j})| d\theta \leq \int_{-\pi}^{\pi} |\sin(\theta)|^{2n+1} d\theta = 4/a_{n}.$$
	Summarizing,
	\bea\label{br11xi}
	|\widehat{BR_{11}}|(\xi) \leq \frac{1}{2}\sum_{n\geq 0} \Big( (\ast^{2n}|\cdot||\hat{f}(\cdot)|)\ast |\widehat{\omega}_{3}(\cdot)|\Big)(\xi).
	\eea
	The estimates on $BR_{12}$, $BR_{2}$ and $BR_3$ follow as the one on $BR_{11}$.
	We conclude that
	\begin{equation*}
	\begin{aligned}
	\|BR_1\|_{\fzerone}&\leq \frac12\frac{1}{1-\yo^2}\left(\yo \|\omega_2\|_{\fzerone}+\|\omega_3\|_{\fzerone}\right),\\
	\|BR_2\|_{\fzerone}&\leq \frac12\frac{1}{1-\yo^2}\left(\yo \|\omega_1\|_{\fzerone}+\|\omega_3\|_{\fzerone}\right),\\
	\|BR_3\|_{\fzerone}&\leq \frac12\frac{1}{1-\yo^2}\left(\|\omega_1\|_{\fzerone}+\|\omega_2\|_{\fzerone}\right),\\
	\end{aligned}
	\end{equation*}
	Introducing this bounds into \eqref{Df01} we find that
	\begin{equation*}
	\|\pone \md\|_{\fzerone}\leq \frac{\yo}{1-\yo^2}\left(2\|\omega_2\|_{\fzerone}+\|\omega_1\|_{\fzerone}\right)+\frac{1}{1-\yo^2}\|\omega_3\|_{\fzerone}.
	\end{equation*}
	Substituting the bounds for the vorticity \eqref{boundomega12},\eqref{boundomega3}, it follows that
	\begin{equation*}
	\begin{aligned}
	\|\pone \md\|_{\fzerone}&\leq \frac{\yo}{1-\yo^2}\left(2\|\pone \Omega\|_{\fzerone}+\|\ptwo \Omega\|_{\fzerone}\right)\\
	&\quad+A_\mu\frac{\yo}{1-\yo^2}\left(\|\pone \md\|_{\fzerone}+\|\ptwo \md\|_{\fzerone}\right).
	\end{aligned}
	\end{equation*}
	Analogously, 
	\begin{equation*}
	\begin{aligned}
	\|\ptwo \md\|_{\fzerone}&\leq \frac{\yo}{1-\yo^2}\left(2\|\ptwo \Omega\|_{\fzerone}+\|\pone \Omega\|_{\fzerone}\right)\\
	&\quad+A_\mu\frac{\yo}{1-\yo^2}\left(\|\pone \md\|_{\fzerone}+\|\ptwo \md\|_{\fzerone}\right).
	\end{aligned}
	\end{equation*}
	If we denote 
	\begin{equation}
	\label{S1S2}
	S_1=\frac{\yo}{1-\yo^2},\hspace{1cm}S_2=\frac{S_1}{1-A_\mu S_1},
	\end{equation}
	the above inequalities can be written as
	\begin{equation}
	\begin{aligned}
	\|\pone\md\|_{\fzerone}&\leq S_2 \left( 2\|\pone \Omega\|_{\fzerone} +\|\ptwo \Omega\|_{\fzerone} +A_\mu \|\ptwo \md\|_{\fzerone} \right),\\
	\|\ptwo\md\|_{\fzerone}&\leq S_2 \left( 2\|\ptwo \Omega\|_{\fzerone} +\|\pone \Omega\|_{\fzerone} +A_\mu \|\pone \md\|_{\fzerone} \right).
	\end{aligned}
	\end{equation}
	Therefore, it is not hard to see that
	\begin{equation}
	\label{Df01bound}
	\begin{aligned}
	\|\pone \md\|_{\fzerone}\leq \frac{S_2(2+A_\mu S_2 ) }{1-(A_\mu S_2)^2}\Big( \|\pone \Omega\|_{\fzerone}+\|\ptwo \Omega\|_{\fzerone} \Big),\\
	\|\ptwo \md\|_{\fzerone}\leq \frac{S_2(2+A_\mu S_2 )}{1-(A_\mu S_2)^2}\Big( \|\ptwo \Omega\|_{\fzerone}+\|\pone \Omega\|_{\fzerone} \Big).
	\end{aligned}
	\end{equation}
	By defining the following constants
	\begin{equation*}
	c_1=\frac{S_2}{1-(A_\mu S_2)^2}(2+A_\mu S_2),\hspace{0.7cm}c_2=\frac{S_2}{1-(A_\mu S_2)^2}(1+2A_\mu S_2),
	\end{equation*}
	and recalling \eqref{Df01} and the bounds above we have that
	\begin{equation*}
	\begin{aligned}
	\|\pone \Omega\|_{\fzerone}\leq A_\mu c_1\|\pone \Omega\|_{\fzerone}+A_\mu c_2\|\ptwo \Omega\|_{\fzerone}+2A_\rho\yo,
	\end{aligned}
	\end{equation*}
	\begin{equation*}
	\begin{aligned}
	\|\ptwo \Omega\|_{\fzerone}\leq A_\mu c_1\|\ptwo \Omega\|_{\fzerone}+A_\mu c_2\|\pone \Omega\|_{\fzerone}+2A_\rho\yo.
	\end{aligned}
	\end{equation*}
	Therefore, we can conclude that
	\begin{equation*}
	\|\partial_{\alpha_i}\Omega\|_{\fzerone}\leq 2A_\rho \yo\frac{1}{1-A_\mu(c_1+c_2)}.
	\end{equation*}
	This expression can be simplified further to obtain that
	\begin{equation*}
	\|\partial_{\alpha_i}\Omega\|_{\fzerone}\leq 2A_\rho \yo\frac{1-A_\mu S_1}{1-5A_\mu S_1}.
	\end{equation*}
	Going back to \eqref{Df01bound} we find that
	\begin{equation*}
	\begin{aligned}
	\|\partial_{\alpha_i}\md\|_{\fzerone}&\leq \frac{S_2}{1-(A_\mu S_2)^2}3(1+A_\mu S_2 )2A_\rho \yo\frac{1-A_\mu S_1}{1-5A_\mu S_1}\\
	&=6A_\rho \yo \frac{S_1(1-A_\mu S_1)}{(1-2A_\mu S_1)(1-5A_\mu S_1)}
	\end{aligned}
	\end{equation*}
	This last two bounds combined with the estimates \eqref{boundomega12}, \eqref{boundomega3} conclude the proof of \eqref{partialOmegaineq1}-\eqref{om3est}. Finally, to show \eqref{OmegaF11}, we do the following using \eqref{Df01bound}:
	\begin{align*}
	\|\Omega\|_{\dot{\mathcal{F}}^{1,1}} &\leq A_{\mu}\|\partial_{\alpha_{1}}\mathcal{D}(\Omega)\|_{\mathcal{F}^{0,1}} + A_{\mu}\|\partial_{\alpha_{2}}\mathcal{D}(\Omega)\|_{\mathcal{F}^{0,1}} + 2A_{\rho}\|f\|_{\dot{\mathcal{F}}^{1,1}}\\
	&\leq \frac{6A_{\mu}S_2}{1-A_\mu S_2}\|\Omega\|_{\dot{\mathcal{F}}^{1,1}} + 2A_{\rho}\|f\|_{\dot{\mathcal{F}}^{1,1}}.
	\end{align*}
	Therefore,
	$$\|\Omega\|_{\dot{\mathcal{F}}^{1,1}} \leq 2A_{\rho}\Big(\frac{1-2A_{\mu}S_{1}}{1-8A_{\mu}S_{1}}\Big)\|f\|_{\dot{\mathcal{F}^{1,1}}}.$$
	This concludes the proof.
\end{proof}

\begin{prop}\label{f11vorticity}
	Define the constants $C_3$, $C_4$ and $C_5$ depending on $A_\mu$ and $\|f\|_{\dot{\mathcal{F}}^{1,1}}$,
	\begin{equation}\label{C5}
	\begin{aligned}
	C_3&=\frac{1+\yo^2}{1-\yo^2}\left(1+A_\mu\frac{6\yo(1-A_\mu S_1)}{(1-\yo^2)(1-2A_\mu S_1)(1-5A_\mu S_1)}\right),\\
	C_4&=\frac{1+S_2^2A_\mu^2\left(C_3+C_1+4S_1C_1\yo\right)}{1-3A_\mu S_2(1+A_\mu S_2)},\\
	C_5&=\frac{S_2}{\yo}\frac{3+A_\mu S_2(3+C_3+C_1+4S_1C_1\yo)}{1-3A_\mu S_2(1+A_\mu S_2)},
	\end{aligned}
	\end{equation}
	where $C_1$, $S_1$ and $S_2$ are given by \eqref{C1C2} and \eqref{S1S2}.
	Then, we have the following estimates
	\begin{equation}
	\label{f11omega}
	\begin{aligned}
	\|\omega_1\|_{\foneone}&=\|\ptwo\Omega\|_{\foneone}\leq 2A_\rho C_4 \|f\|_{\ftwoone},\\
	\|\omega_2\|_{\foneone}&=\|\pone\Omega\|_{\foneone}\leq 2A_\rho C_4 \|f\|_{\ftwoone},
	\end{aligned}
	\end{equation}
	and 
	\begin{equation}
	\label{f11omega3}
	\begin{aligned}
	\|\omega_3\|_{\foneone}&\leq 4A_\mu A_\rho \yo^2\|f\|_{\ftwoone}(C_5+3C_2),\\
	\|\partial_{\alpha_i}\md\|_{\foneone}&\leq 2A_\rho C_5\|f\|_{\foneone}\|f\|_{\ftwoone},\hspace{0.5cm}i=1,2.
	\end{aligned}	
	\end{equation}
	Moreover,
	$$\|\Omega\|_{\ftwoone} \leq 2A_{\rho} B_{2}\|f\|_{\mathcal{\dot{F}}^{2,1}},$$
	where
	\begin{equation}\label{B2}
	B_{2} = \frac{1+2S_{2}^{2}A_{\mu}(C_{1}+C_{3}+ 4S_{1}C_{1}\|f\|_{\dot{\mathcal{F}}^{1,1}})}{1-6A_{\mu}S_{2}(1+A_{\mu}S_{2})}.
	\end{equation}
\end{prop}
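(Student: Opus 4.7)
The strategy is to mirror the proof of Proposition \ref{f01omega} term by term, upgrading every $\fzerone$ bound to an $\foneone$ bound via two mechanisms: (i) inserting an extra factor of $|\xi|$ in the Fourier-side Taylor expansion used for the Birkhoff-Rott integrals, and (ii) applying the Wiener-algebra Leibniz rule
$$
\|gh\|_{\foneone}\leq \|g\|_{\foneone}\|h\|_{\fzerone}+\|g\|_{\fzerone}\|h\|_{\foneone},
$$
which follows from $|\xi|\leq |\xi-\eta|+|\eta|$ after writing the Fourier transform of a product as a convolution. By \eqref{omegai} it immediately holds that $\|\omega_1\|_{\foneone}=\|\ptwo\Omega\|_{\foneone}$ and $\|\omega_2\|_{\foneone}=\|\pone\Omega\|_{\foneone}$, while Leibniz applied to $\omega_3=\ptwo\Omega\,\pone f-\pone\Omega\,\ptwo f$ together with \eqref{Omegagraph} reduces the $\omega_3$ estimate to bounds for $\partial_{\alpha_i}\md$ in $\fzerone$ (already in \eqref{om3est}) and in $\foneone$.

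First I would differentiate \eqref{Omegagraph} and use \eqref{partialD} to derive the coupled inequalities
$$
\|\partial_{\alpha_i}\Omega\|_{\foneone}\leq A_\mu \|\partial_{\alpha_i}\md\|_{\foneone}+2A_\rho \|f\|_{\ftwoone},
$$
$$
\|\partial_{\alpha_i}\md\|_{\foneone}\leq 2\|BR_i\|_{\foneone}+2\|BR_3\,\partial_{\alpha_i}f\|_{\foneone},\qquad i=1,2,
$$
with the last term expanded by Leibniz into $\|BR_3\|_{\foneone}\|f\|_{\foneone}+\|BR_3\|_{\fzerone}\|f\|_{\ftwoone}$. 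The central technical step is then to derive $\foneone$ bounds for $BR_1,BR_2,BR_3$ parallel to \eqref{br11xi}. I would repeat the symmetrization/Taylor-expansion argument of Proposition \ref{f01omega} but multiply by $|\xi|$ and distribute it through the iterated convolution using $|\xi|\leq |\xi-\xi_1|+\sum_{j=1}^{2n-1}|\xi_j-\xi_{j+1}|+|\xi_{2n}|$. Each placement of the extra $|\xi|$ contributes either an additional factor of $\|f\|_{\foneone}$ together with an $\|f\|_{\ftwoone}$-type term, or an $\foneone$ norm on $\omega_k$; summing the resulting geometric series produces the factor $(1+\|f\|_{\foneone}^2)/(1-\|f\|_{\foneone}^2)$ that accounts for the form of $C_3$ in \eqref{C5}.

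Inserting these $BR$-estimates into the display for $\partial_{\alpha_i}\md$, and using the $\fzerone$ bounds \eqref{partialOmegaineq1}--\eqref{om3est} of Proposition \ref{f01omega} to absorb all lower-order contributions, produces a closed $2\times 2$ inhomogeneous linear system in the unknowns $\|\pone\md\|_{\foneone}$ and $\|\ptwo\md\|_{\foneone}$ whose inhomogeneity is a constant multiple of $\|f\|_{\ftwoone}$. The matrix is invertible precisely when $1-3A_\mu S_2(1+A_\mu S_2)>0$, which is guaranteed by the size condition on $\|f\|_{\foneone}$, and solving gives the constant $C_5$ in \eqref{C5} and hence \eqref{f11omega3}. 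Feeding this back into the first display of the previous paragraph yields \eqref{f11omega} with constant $C_4$, and the $\omega_3$ estimate follows from Leibniz using the combination of $C_5$ for the $\foneone$ piece and the already-established $C_2$ from \eqref{om3est} for the $\fzerone$ piece. Finally, writing $\|\Omega\|_{\ftwoone}\leq A_\mu(\|\pone\md\|_{\foneone}+\|\ptwo\md\|_{\foneone})+2A_\rho\|f\|_{\ftwoone}$ and substituting the expression for $C_5$ produces the constant $B_2$ in \eqref{B2}.

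The main obstacle is purely combinatorial: one must keep careful track of all contributions generated when the extra $|\xi|$ lands on each of the factors in the iterated convolutions defining the $BR_j$, verify that each geometric series in $\|f\|_{\foneone}$ converges under the stated smallness, and then show that the rational expressions obtained by solving the resulting linear system collapse to exactly $C_3$, $C_4$, $C_5$, and $B_2$. No new idea is required beyond Proposition \ref{f01omega} and the Leibniz rule above; the work is entirely in the bookkeeping of coefficients.
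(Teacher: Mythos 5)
Your proposal is correct and follows essentially the same route as the paper: differentiate \eqref{Omegagraph}, use \eqref{partialD} to reduce to Birkhoff–Rott bounds, distribute the extra $|\xi|$ through the iterated convolutions of \eqref{br11xi}, absorb lower-order terms via the $\fzerone$ bounds of Proposition \ref{f01omega}, and close the resulting $2\times 2$ linear system whose invertibility condition $1-3A_\mu S_2(1+A_\mu S_2)>0$ gives $C_4$, $C_5$, and $B_2$. The only content left unexecuted is the coefficient bookkeeping, which is exactly what the paper's proof spells out.
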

\begin{proof}
	Using the formulas for the vorticity it follows that
	\begin{equation*}
	\|\omega_1\|_{\foneone}=\|\ptwo\Omega\|_{\foneone},\hspace{0.5cm}\|\omega_2\|_{\foneone}=\|\pone \Omega\|_{\foneone},
	\end{equation*}
	\begin{equation*}
	\begin{aligned}
	\|\omega_3\|_{\foneone}&\leq A_\mu \yo (\|\ptwo\md\|_{\foneone}\|\pone\md\|_{\foneone})\\
	&\quad+A_\mu \|f\|_{\ftwoone} (\|\pone\md\|_{\fzerone}+\|\ptwo \md\|_{\fzerone}).
	\end{aligned}
	\end{equation*}
	It suffices then to bound $\|\partial_{\alpha_i}\Omega\|_{\foneone}$ and $\|\partial_{\alpha_i}\md\|_{\foneone}$.  From \eqref{partialalphaOmega} we have that
	\begin{equation}
	\label{Omegaf21}
	\|\partial_{\alpha_{1}}\Omega\|_{\dot{\mathcal{F}}^{1,1}}  \leq 2A_{\rho}\|f\|_{\dot{\mathcal{F}}^{2,1}}+ A_\mu \|\pone\md\|_{\foneone},
	\end{equation}
	\begin{equation*}
	\|\pone\md\|_{\foneone}\leq 2 \|BR_{1}\|_{\foneone} +  2\|BR_{3}\|_{\fzerone} \|f\|_{\ftwoone}+2\|BR_{3}\|_{\foneone} \|f\|_{\foneone}.
	\end{equation*}
	Using an analogous bound to \eqref{br11xi}, it follows that
	\begin{multline*}
	\|BR_{1}\|_{\dot{\mathcal{F}}^{1,1}}  = \int_{\mathbb{R}^{2}} |\xi||\widehat{BR_{1}}(\xi)| d\xi  \leq  \frac{1}{2} \sum_{n\geq 0} \int |\xi| |\widehat{\omega_{3}}(\cdot)|\ast(\ast^{2n}|\cdot||\hat{f}(\cdot)|)(\xi) d\xi\\ +
	\frac12 \sum_{n\geq 0} \int |\xi||\widehat{\omega_{2}}(\cdot)|\ast(\ast^{2n+1}|\cdot||\hat{f}(\cdot)|)(\xi) d\xi.
	\end{multline*}
	By the product rule, we can distribute the multiplier $|\xi|$ to each term in the convolution to obtain
	\begin{multline*}
	\|BR_{1}\|_{\dot{\mathcal{F}}^{1,1}}  \leq  \frac12\sum_{n\geq 1} 2n \int |\widehat{\omega_{3}}(\cdot)|\ast(\ast^{2n-1}|\cdot||\hat{f}(\cdot)|\ast |\cdot|^{2}|\hat{f}(\cdot)|)(\xi) d\xi\\+
	\frac12 \sum_{n\geq 0} (2n+1) \int |\widehat{\omega_{2}}(\cdot)|\ast(\ast^{2n}|\cdot||\hat{f}(\cdot)|\ast |\cdot|^{2}|\hat{f}(\cdot)|)(\xi) d\xi\\ +\frac{1}{2}\sum_{n\geq 0}  \int (|\cdot| |\widehat{\omega_{3}}(\cdot)|)\ast(\ast^{2n}|\cdot||\hat{f}(\cdot)|)(\xi) d\xi  \\+
	\frac12 \sum_{n\geq 0}  \int (|\cdot||\widehat{\omega_{2}}(\cdot)|)\ast(\ast^{2n+1}|\cdot||\hat{f}(\cdot)|)(\xi) d\xi.
	\end{multline*}
	Using Young's inequality, we finally obtain that
	\begin{equation*}
	\begin{aligned}
	\|BR_{1}\|_{\dot{\mathcal{F}}^{1,1}} & \leq  \frac{\yo}{\left(1-\yo^2\right)^2}\|\omega_3\|_{\fzerone}\|f\|_{\ftwoone}+\frac{1+\yo^2}{2\left(1-\yo^2\right)^2}\|\omega_2\|_{\fzerone}\|f\|_{\ftwoone}\\
	&\quad+\frac12\frac{\yo}{1-\yo^2}\|\omega_2\|_{\foneone}+\frac12\frac{1}{1-\yo^2}\|\omega_3\|_{\foneone}.
	\end{aligned}
	\end{equation*}
	Proceeding in a similar way we have that
	\begin{multline*}
	\|BR_3\|_{\foneone}\leq \frac12\frac{1}{1-\yo^2}(\|\omega_1\|_{\foneone}+\|\omega_2\|_{\foneone})\\+\frac{\yo}{\left(1-\yo^2\right)^2}\|f\|_{\ftwoone}(\|\omega_1\|_{\fzerone}+\|\omega_2\|_{\fzerone}).
	\end{multline*}
	From the bounds in Proposition \ref{f01omega} we can write the above estimates as follows
	\begin{equation}
	\label{br1f11}
	\begin{aligned}
	\|BR_{1}\|_{\foneone} & \leq  A_\rho \|f\|_{\ftwoone}\yo \frac{\left(1+\yo^2\right)\left(1+6A_\mu C_2\yo\right)}{\left(1-\yo^2\right)^2}\\
	&\quad+\frac12S_1\|\pone \Omega\|_{\foneone}+A_\mu\frac12S_1\left(\|\pone\md\|_{\foneone}+\|\ptwo\md\|_{\foneone}\right).
	\end{aligned}
	\end{equation}
	\begin{equation}\label{br3f11}
	\begin{aligned}
	\|BR_3\|_{\foneone}\leq 4C_1A_\rho S_1^2 \|f\|_{\ftwoone}+ \frac12\frac{1}{1-\yo^2}(\|\ptwo\Omega\|_{\foneone}+\|\pone\Omega\|_{\foneone}).
	\end{aligned}
	\end{equation}

	Then, using \eqref{br1f11} and \eqref{br3f11} as well as the estimates from Proposition \ref{f01omega}, we obtain
	\begin{equation*}
	\begin{aligned}
	\|\pone \md\|_{\foneone}&\leq 2\|BR_1\|_{\foneone}+2\|BR_3\|_{\fzerone}\|f\|_{\ftwoone}+2\|BR_1\|_{\foneone}\|f\|_{\foneone}\\
	&\leq S_1\|\pone \Omega\|_{\foneone}+ A_\mu S_1 \|\pone \md\|_{\foneone}+A_\mu S_1 \|\ptwo\md\|_{\foneone}\\
	&\quad+2C_3 A_\rho \yo \|f\|_{\ftwoone}+2S_1C_1A_\rho \|f\|_{\ftwoone}+8S_1^2C_1A_\rho \yo\|f\|_{\ftwoone}\\
	&\quad+S_1\|\ptwo\Omega\|_{\foneone}+S_1\|\pone\Omega\|_{\foneone}\\
	&\leq 2S_1\|\pone\Omega\|_{\foneone}+S_1\|\ptwo\Omega\|_{\foneone}+A_\mu S_1 \|\pone \md\|_{\foneone}+A_\mu S_1 \|\ptwo\md\|_{\foneone}\\
	&\quad+ 2S_1 A_\rho\|f\|_{\ftwoone}\left(C_3+C_1+4S_1C_1\yo\right)
	\end{aligned}
	\end{equation*}
	Recalling the definition of $S_1$ and $S_2$ \eqref{S1S2}, from here we can write that
	\begin{equation*}
	\begin{aligned}
	\|\pone \md\|_{\foneone}&\leq 2S_2\|\pone\Omega\|_{\foneone}+S_2\|\ptwo\Omega\|_{\foneone}+A_\mu S_2\|\ptwo\md\|_{\foneone}\\
	&\quad+2A_\rho S_2\|f\|_{\ftwoone}\left(C_3+C_1+4S_1C_1\yo\right),
	\end{aligned}
	\end{equation*}
	and analogously,
	\begin{equation*}
	\begin{aligned}
	\|\ptwo \md\|_{\foneone}&\leq 2S_2\|\ptwo\Omega\|_{\foneone}+S_2\|\pone\Omega\|_{\foneone}+A_\mu S_2\|\pone\md\|_{\foneone}\\
	&\quad+2A_\rho S_2\|f\|_{\ftwoone}\left(C_3+C_1+4S_1C_1\yo\right).
	\end{aligned}
	\end{equation*}
	We conclude that
	\begin{equation*}
	\begin{aligned}
	\|\pone \md\|_{\foneone}&\leq S_2(2+A_\mu S_2)\|\pone \Omega\|_{\foneone}+S_2(1+2A_\mu S_2)\|\ptwo \Omega\|_{\foneone}\\
	&\quad+2S_2^2A_\mu A_\rho \|f\|_{\ftwoone}\left(C_3+C_1+4S_1C_1\yo\right),\\
	\|\ptwo \md\|_{\foneone}&\leq S_2(2+A_\mu S_2)\|\ptwo \Omega\|_{\foneone}+S_2(1+2A_\mu S_2)\|\pone \Omega\|_{\foneone}\\
	&\quad+2S_2^2A_\mu A_\rho \|f\|_{\ftwoone}\left(C_3+C_1+4S_1C_1\yo\right).
	\end{aligned}
	\end{equation*}
	Now, we will introduce these inequalities into \eqref{Omegaf21} to close the estimates. First, we have that
	\begin{equation*}
	\begin{aligned}
	\|\pone\Omega\|_{\foneone}&\leq 2A_\rho\left(1+S_2^2A_\mu^2 \left(C_3+C_1+4S_1C_1\yo\right)\right) \|f\|_{\ftwoone}\\
	&\quad+A_\mu S_2(2+A_\mu S_2)\|\pone \Omega\|_{\foneone}+A_\mu S_2(1+2A_\mu S_2)\|\ptwo \Omega\|_{\foneone},
	\end{aligned}
	\end{equation*}
	which implies that
	\begin{equation*}
	\begin{aligned}
	\|\pone\Omega\|_{\foneone}&\leq \frac{A_\mu S_2(1+2A_\mu S_2)}{1-A_\mu S_2(2+A_\mu S_2)}\|\ptwo\Omega\|_{\foneone}\\
	&\quad+2A_\rho\|f\|_{\ftwoone}\frac{1+S_2^2A_\mu^2\left(C_3+C_1+4S_1C_1\yo\right)}{1-A_\mu S_2(2+A_\mu S_2)}.
	\end{aligned}
	\end{equation*}
	The above inequality combined with the analogous one for $\ptwo\Omega$ yields that
	\begin{equation*}
	\begin{aligned}
	\|\partial_{\alpha_i}\Omega\|_{\foneone}&\leq 2A_\rho\|f\|_{\ftwoone} \frac{1}{1-\frac{A_\mu S_2(1+2A_\mu S_2)}{1-A_\mu S_2(2+A_\mu S_2)}}\frac{1+S_2^2A_\mu^2\left(C_3+C_1+4S_1C_1\yo\right)}{1-A_\mu S_2(2+A_\mu S_2)}\\
	&=2A_\rho\|f\|_{\ftwoone} \frac{1+S_2^2A_\mu^2\left(C_3+C_1+4S_1C_1\yo\right)}{1-3A_\mu S_2(1+A_\mu S_2)}.
	\end{aligned}
	\end{equation*}
	By denoting
	\begin{equation*}
	C_4=\frac{1+S_2^2A_\mu^2\left(C_3+C_1+4S_1C_1\yo\right)}{1-3A_\mu S_2(1+A_\mu S_2)},
	\end{equation*}
	we conclude that
	\begin{equation}
	\|\partial_{\alpha_i}\Omega\|_{\foneone}\leq 2A_\rho C_4\|f\|_{\ftwoone},
	\end{equation}
	and therefore
	\begin{equation*}
	\begin{aligned}
	\|\partial_{\alpha_i}\md\|_{\foneone}&\leq 2A_\rho S_2 \|f\|_{\ftwoone}\Big(3(1+A_\mu S_2)C_4+A_\mu S_2 (C_3+C_1+4S_1C_1\yo)\Big)\\
	&=2A_\rho C_5\|f\|_{\foneone}\|f\|_{\ftwoone},
	\end{aligned}
	\end{equation*}
	where we have denoted
	\begin{equation*}
	C_5=\frac{S_2}{\yo}\frac{3+A_\mu S_2(3+C_3+C_1+4S_1C_1\yo)}{1-3A_\mu S_2(1+A_\mu S_2)}.
	\end{equation*}
	Thus, the estimates for the vorticity are
	\begin{equation*}
	\begin{aligned}
	\|\omega_i\|_{\foneone}&\leq 2A_\rho C_4 \|f\|_{\ftwoone},\hspace{0.5cm}i=1,2,\\
	\|\omega_3\|_{\foneone}&\leq 4A_\mu A_\rho \yo^2\|f\|_{\ftwoone}(C_5+3C_2).
	\end{aligned}
	\end{equation*}
	Finally, we estimate the quantity $\|\Omega\|_{\dot{\mathcal{F}}^{2,1}}$.
	\begin{align*}
	\|\Omega\|_{\dot{\mathcal{F}}^{2,1}} &\leq A_{\mu}\|\partial_{\alpha_{1}}\mathcal{D}(\Omega)\|_{\dot{\mathcal{F}}^{1,1}} + A_{\mu}\|\partial_{\alpha_{2}}\mathcal{D}(\Omega)\|_{\dot{\mathcal{F}}^{1,1}} + 2A_{\rho}\|f\|_{\dot{\mathcal{F}}^{2,1}}\\
	&\leq 6A_{\mu}S_{2}(1+A_{\mu}S_{2}) + 2A_{\rho}(1+2S_{2}^{2}A_{\mu}(C_{1}+C_{3}+ 4S_{1}C_{1}\|f\|_{\dot{\mathcal{F}}^{1,1}}))\|f\|_{\dot{\mathcal{F}}^{2,1}}.
	\end{align*}
	Therefore,
	$$\|\Omega\|_{\ftwoone} \leq 2A_{\rho}\frac{1+2S_{2}^{2}A_{\mu}(C_{1}+C_{3}+ 4S_{1}C_{1}\|f\|_{\dot{\mathcal{F}}^{1,1}})}{1-6A_{\mu}S_{2}(1+A_{\mu}S_{2})}\|f\|_{\dot{\mathcal{F}}^{2,1}}.$$
	This concludes the proof.
\end{proof}

\begin{remark}\label{vorticityremark}
	Because we actually have the triangle inequality
	$$|\xi|^{s} \leq |\xi-\xi_{1}|^{s}+ \sum_{k=1}^{m}|\xi_{j}-\xi_{j+1}|^{s} + |\xi_{m+1}|^{s}$$
	for all $0<s\leq 1$, notice that the same arguments as above can be used to show that
	$$\|\omega_{1}\|_{\dot{\mathcal{F}}^{s,1}_{\nu}} =\|\partial_{\alpha_{2}}\Omega\|_{\dot{\mathcal{F}}^{s,1}_{\nu}} \leq 2 A_\rho C_{4,\nu}\|f\|_{\dot{\mathcal{F}}^{s+1,1}_{\nu}}$$
	and
	$$\|\omega_{3}\|_{\dot{\mathcal{F}}^{s,1}_{\nu}}  \leq 4A_\mu A_\rho \yo^2\|f\|_{\ftwoone}(C_{5,\nu}+3C_{2,\nu})$$
	where the constants $C_{2,\nu}$, $C_{4,\nu}$ and $C_{5,\nu}$ now depend on $\|f\|_{\dot{\mathcal{F}}^{1,1}_{\nu}}$ rather than $\|f\|_{\dot{\mathcal{F}}^{1,1}}$. 
\end{remark}

\section{Instant Analyticity of $f$}\label{secanalytic}

We dedicate this section to proving the norm decrease inequality \eqref{normdecrease} which will be needed to obtain the global existence results of this paper. Note that \eqref{normdecrease} states that the interface function becomes instantly analytic given medium-sized initial data $f_{0} \in \dot{\mathcal{F}}^{1,1}$. Precisely, we show the following:

\begin{prop}\label{normdecreaseprop}
	Assume the initial data $f_0$ satisfies that 
	\begin{equation}\label{condition}
	\sigma\left(\|f_0\|_{\mathcal{\dot{F}}^{1,1}}\right)>0,
	\end{equation}
	where
	\begin{equation*}
	\begin{aligned}
	\sigma\left(\|f_0\|_{\mathcal{\dot{F}}^{1,1}}\right)& = -\nu + A_{\rho}\left(1 - 2 \Big(\frac{2B_{1}+B_{2}-B_{2}\|f_0\|_{\dot{\mathcal{F}}^{1,1}}^{2}}{(1-\|f_0\|_{\dot{\mathcal{F}}^{1,1}}^{2})^{2}}\Big)\|f_0\|_{\dot{\mathcal{F}}^{1,1}}^{2}\right.\\
	&\left.\quad- A_{\mu}\Big(\frac{12C_{2}+2C_{5}-2C_{5}\|f_0\|_{\dot{\mathcal{F}}^{1,1}}^{2}}{(1-\|f_0\|_{\dot{\mathcal{F}}^{1,1}}^{2})^{2}}\Big)\|f_0\|_{\dot{\mathcal{F}}^{1,1}}^{3} - 2A_{\mu}C_{5}\|f_0\|_{\dot{\mathcal{F}}^{1,1}}\right).
	\end{aligned}
	\end{equation*}
	All the constants above are defined precisely in  \eqref{C1C2}, \eqref{B1}, \eqref{C5}, and \eqref{B2}, which are given during the proofs of the previous estimates.  Then 
	\begin{equation*}
	\|f\|_{\dot{\mathcal{F}}^{1,1}_{\nu}}(t)+ \sigma\left(\|f_0\|_{\mathcal{\dot{F}}^{1,1}}\right) \int_0^t \|f\|_{\dot{\mathcal{F}}^{2,1}_{\nu}}(\tau) d\tau \leq \|f_0\|_{\mathcal{\dot{F}}^{1,1}}.
	\end{equation*}
\end{prop}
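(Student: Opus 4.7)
The plan is to derive a differential inequality for $\|f\|_{\dot{\mathcal F}^{1,1}_\nu}(t)$ that captures the parabolic gain from $-A_\rho\Lambda f$ and absorbs the nonlinear terms using the vorticity bounds in Propositions \ref{f01omega}--\ref{f11vorticity} together with their analytic analogues from Remark \ref{vorticityremark}. Taking the Fourier transform of \eqref{decompnonlinear} gives $\partial_t\hat f(\xi)=-A_\rho|\xi|\hat f(\xi)+\widehat{N(f)}(\xi)$; after regularizing $|\hat f|$ by $\sqrt{\varepsilon+|\hat f|^2}$, the identity $\partial_t|\hat f|^2=2\operatorname{Re}(\overline{\hat f}\,\partial_t\hat f)$ and the limit $\varepsilon\to 0^+$ yield the pointwise estimate $\partial_t|\hat f|\le -A_\rho|\xi||\hat f|+|\widehat{N(f)}|$. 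Multiplying by $|\xi|e^{\nu t|\xi|}$ and integrating in $\xi$, the time derivative of the analytic weight contributes exactly $\nu\|f\|_{\dot{\mathcal F}^{2,1}_\nu}$, so I obtain
\begin{equation*}
\frac{d}{dt}\|f\|_{\dot{\mathcal F}^{1,1}_\nu}\le(\nu-A_\rho)\|f\|_{\dot{\mathcal F}^{2,1}_\nu}+\|N(f)\|_{\dot{\mathcal F}^{1,1}_\nu}.
\end{equation*}
Hence the proof reduces to showing $\|N(f)\|_{\dot{\mathcal F}^{1,1}_\nu}\le A_\rho\,G(\|f\|_{\dot{\mathcal F}^{1,1}_\nu})\|f\|_{\dot{\mathcal F}^{2,1}_\nu}$ where $G$ is the rational polynomial appearing inside the parenthesis of $\sigma$.

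For the three nonlinear pieces in \eqref{N1N2N3}, I would argue as follows. For $N_1=\tfrac{A_\mu}{2}\Lambda\mathcal D(\Omega)$, the bound $|\xi|^2\le |\xi|(|\xi_1|+|\xi_2|)$ together with the $\dot{\mathcal F}^{1,1}_\nu$ analogue of \eqref{f11omega3} yields $\|N_1\|_{\dot{\mathcal F}^{1,1}_\nu}\le 2A_\mu A_\rho C_5\|f\|_{\dot{\mathcal F}^{1,1}_\nu}\|f\|_{\dot{\mathcal F}^{2,1}_\nu}$, which reproduces the $-2A_\mu C_5\|f_0\|_{\dot{\mathcal F}^{1,1}}$ term in $\sigma$. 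For $N_2$ I expand $(1+(\Delta_\beta f)^2)^{-3/2}=\sum_{n\ge 0}a_n(-1)^n(\Delta_\beta f)^{2n}$, noting that the $n=0$ term cancels against the subtracted $\beta/|\beta|$, transform each resulting integral to the Fourier side as a convolution of $\hat f$ factors with $\widehat{\nabla\Omega}$, and follow the polar-coordinate reduction from the proof of Proposition \ref{f01omega}: the angular identity $\int_{-\pi}^\pi|\sin\theta|^{2n+1}\,d\theta=4/a_n$ cancels the Taylor coefficients, Young's inequality converts the convolutions into products of Fourier norms, the geometric series in $\|f\|_{\dot{\mathcal F}^{1,1}_\nu}^2<1$ is summed, and the product-rule distribution of the outer $|\xi|$ onto one convolution factor produces the characteristic $(1-\|f\|_{\dot{\mathcal F}^{1,1}_\nu}^2)^{-2}$ denominator. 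Using $\|\Omega\|_{\dot{\mathcal F}^{1,1}_\nu}\le 2A_\rho B_1\|f\|_{\dot{\mathcal F}^{1,1}_\nu}$ and $\|\Omega\|_{\dot{\mathcal F}^{2,1}_\nu}\le 2A_\rho B_2\|f\|_{\dot{\mathcal F}^{2,1}_\nu}$, the contribution takes the form
\begin{equation*}
\|N_2\|_{\dot{\mathcal F}^{1,1}_\nu}\le \frac{2A_\rho(2B_1+B_2-B_2\|f\|_{\dot{\mathcal F}^{1,1}_\nu}^2)}{(1-\|f\|_{\dot{\mathcal F}^{1,1}_\nu}^2)^2}\|f\|_{\dot{\mathcal F}^{1,1}_\nu}^2\|f\|_{\dot{\mathcal F}^{2,1}_\nu}.
\end{equation*}
The term $N_3$ is treated analogously, but the two factors of $\nabla^\perp f$ supply an additional power of $\|f\|_{\dot{\mathcal F}^{1,1}_\nu}$, while the bounds on $\|\partial_{\alpha_i}\mathcal D(\Omega)\|_{\dot{\mathcal F}^{s,1}_\nu}$ for $s=0,1$ (through $C_2$ and $C_5$) supply the $A_\mu$ factor, producing exactly the $A_\mu\|f\|_{\dot{\mathcal F}^{1,1}_\nu}^3$ correction in $\sigma$.

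Summing the three contributions gives $\frac{d}{dt}\|f\|_{\dot{\mathcal F}^{1,1}_\nu}+\sigma(\|f\|_{\dot{\mathcal F}^{1,1}_\nu})\|f\|_{\dot{\mathcal F}^{2,1}_\nu}\le 0$. Since $\sigma(x)$ is decreasing in $x\in[0,1)$ and $\sigma(\|f_0\|_{\dot{\mathcal F}^{1,1}})>0$ by the hypothesis \eqref{condition}, a standard continuity argument propagates $\|f(t)\|_{\dot{\mathcal F}^{1,1}_\nu}\le \|f_0\|_{\dot{\mathcal F}^{1,1}}$ for all $t\ge 0$: on the interval where this holds, $\sigma(\|f(t)\|_{\dot{\mathcal F}^{1,1}_\nu})\ge\sigma(\|f_0\|_{\dot{\mathcal F}^{1,1}})>0$, and integrating the differential inequality yields both $\|f(t)\|_{\dot{\mathcal F}^{1,1}_\nu}\le\|f_0\|_{\dot{\mathcal F}^{1,1}}$ (closing the bootstrap) and the stated integral bound. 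The main technical obstacle is the bookkeeping required to make the polynomial coefficient arising from the $N_2$ and $N_3$ estimates coincide exactly with the expression for $\sigma$ in the statement — in particular, tracking how many powers of $\|f\|_{\dot{\mathcal F}^{1,1}_\nu}$ appear from product-rule distribution of $|\xi|$, from the Taylor expansion, and from the implicit estimates for $\Omega$ — while a secondary technical point is the justification of $\partial_t|\hat f|\le|\partial_t\hat f|$ on the zero set of $\hat f$, handled by the $\sqrt{\varepsilon+|\hat f|^2}$ regularization.
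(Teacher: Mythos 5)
Your proposal follows the same strategy as the paper: differentiate $\|f\|_{\dot{\mathcal F}^{1,1}_\nu}$, obtain the linear gain $(\nu-A_\rho)\|f\|_{\dot{\mathcal F}^{2,1}_\nu}$ from the analytic weight and the transport term, and then bound $\|N_1\|_{\dot{\mathcal F}^{1,1}_\nu}$, $\|N_2\|_{\dot{\mathcal F}^{1,1}_\nu}$, $\|N_3\|_{\dot{\mathcal F}^{1,1}_\nu}$ by $A_\rho\|f\|_{\dot{\mathcal F}^{2,1}_\nu}$ times rational expressions in $\|f\|_{\dot{\mathcal F}^{1,1}_\nu}$ coming from the Taylor expansion, the polar-coordinate reduction of the kernel, Young's inequality, and the $\Omega$/$\mathcal D(\Omega)$ estimates of Propositions \ref{f01omega}--\ref{f11vorticity}. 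Your coefficient bookkeeping is also correct: the $2B_1$ and $B_2$ pieces match the paper's intermediate sums $2A_\rho B_1\sum 2n x^{2n}$ and $2A_\rho B_2\sum x^{2n}$, with $(1-x^2)^{-2}$ generated by the combinatorial factor $2n$ from the product rule (not merely from distributing one $|\xi|$); and the $12C_2$, $2C_5$ and $2A_\mu C_5$ coefficients reproduce the contributions from $N_3$ and $N_1$. Your two embellishments — the $\sqrt{\varepsilon+|\hat f|^2}$ regularization to justify $\partial_t|\hat f|\le|\partial_t\hat f|$ on the zero set of $\hat f$, and the explicit continuity/bootstrap argument to replace $\sigma(\|f(t)\|_{\dot{\mathcal F}^{1,1}_\nu})$ by $\sigma(\|f_0\|_{\dot{\mathcal F}^{1,1}})$ using monotonicity of $\sigma$ — are both needed and both left tacit in the paper, so they tighten the argument rather than diverge from it. In short, this is the paper's proof with slightly more care on two technical points; no gaps.
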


\begin{proof}
	We will use the evolution equation \eqref{decompnonlinear} and \eqref{N1N2N3}. Differentiating the quantity $\|f\|_{\dot{\mathcal{F}}^{1,1}_{\nu}}$, we obtain
	\begin{align*}
	\frac{d}{dt}\|f\|_{\dot{\mathcal{F}}^{1,1}_{\nu}} (t) &= \frac{d}{dt} \Bigg(\int |\xi| e^{\nu t |\xi|} |\hat{f}(\xi)| d\xi \Bigg)\\
	&\leq \nu \int |\xi|^{2} e^{t \nu |\xi|} |\hat{f}(\xi)| d\xi + \int |\xi| e^{\nu t|\xi|} \frac{1}{2} \Big(\frac{\hat{f}_{t}\overline{\hat{f}}+\hat{f}\overline{\hat{f}_{t}}}{|\hat{f}(\xi)|}\Big) d\xi\\
	&\leq  (\nu-A_{\rho}) \int |\xi|^{2} e^{ \nu t|\xi|} |\hat{f}(\xi)| d\xi + \int |\xi| e^{\nu t|\xi|} |\widehat{N(f)}(\xi)| d\xi.
	\end{align*}
	Hence, using the decomposition \eqref{N1N2N3}, we can use the Fourier arguments as earlier, such as \eqref{br11xi}, to pointwise bound the nonlinear term $$|\widehat{N(f)}(\xi)| \leq |\widehat{N_{1}(f)}(\xi)| + |\widehat{N_{2}(f)}(\xi)| + |\widehat{N_{3}(f)}(\xi)|$$ in frequency space. The latter two terms are bounded by
	\bea\label{N2est}
	|\widehat{N_{2}(f)}(\xi)| \leq \sum_{n\geq 1}\Big((\ast^{2n}|\cdot||\hat{f}(\cdot)|)\ast |\cdot||\hat{\Omega}(\xi)|\Big)(\xi)
	\eea
	and
	\bea\label{N3est}
	|\widehat{N_{3}(f)}(\xi)| \leq \frac{A_{\mu}}{2}\sum_{n\geq 1}\Big((\ast^{2n}|\cdot||\hat{f}(\cdot)|)\ast |\cdot||\widehat{\mathcal{D}(\Omega)}(\xi)|\Big)(\xi).
	\eea
	The estimate on $\widehat{N_{1}(f)}(\xi)$ is done in Section \ref{secw}:
	$$\|N_{1}\|_{\dot{\mathcal{F}}^{1,1}_{\nu}} = \frac{A_{\mu}}{2}\|\mathcal{D}(\Omega)\|_{\dot{\mathcal{F}}^{2,1}_{\nu}} \leq 2A_{\rho}A_{\mu}C_{5}\|f\|_{\dot{\mathcal{F}}^{1,1}_{\nu}}\|f\|_{\dot{\mathcal{F}}^{2,1}_{\nu}}.$$
	For the other two nonlinear terms, using the triangle inequality
	$$|\xi|\leq |\xi-\xi_{1}| + |\xi_{1}-\xi_{2}| + \cdots + |\xi_{2n}|,$$ we obtain that
	$$e^{\nu t|\xi|} \leq e^{\nu t|\xi-\xi_{1}|} e^{\nu t|\xi_{1}-\xi_{2}|} \cdots e^{\nu t|\xi_{2n}|}  $$
	and therefore

	\begin{multline*}
	\int |\xi| e^{\nu t|\xi|} |\widehat{N_{2}(f)}(\xi)| d\xi \leq \sum_{n\geq 1} \int |\xi| e^{\nu t|\xi|}\Big((\ast^{2n}|\cdot||\hat{f}(\cdot)|)\ast |\cdot||\hat{\Omega}(\xi)|\Big)(\xi) d\xi\\
	\leq \sum_{n\geq 1}2n \int \Big((\ast^{2n-1}|\cdot|e^{\nu t|\cdot|}|\hat{f}(\cdot)|)\ast |\cdot|e^{\nu t|\cdot|}|\hat{\Omega}(\xi)| \ast |\cdot|^{2}e^{\nu t|\cdot|}|\hat{f}(\xi)|\Big)(\xi) d\xi \\+ \sum_{n\geq 1} \int \Big((\ast^{2n}|\cdot|e^{\nu t|\cdot|}|\hat{f}(\cdot)|)\ast |\cdot|^{2}e^{\nu t|\cdot|}|\hat{\Omega}(\xi)|\Big)(\xi) d\xi\\
	\leq \sum_{n\geq 1}2n\|f\|_{\dot{\mathcal{F}}^{1,1}_\nu}^{2n-1}\|\Omega\|_{\dot{\mathcal{F}}^{1,1}_\nu}\|f\|_{\dot{\mathcal{F}}^{2,1}_\nu} + \sum_{n\geq 1} \|f\|_{\dot{\mathcal{F}}^{1,1}_\nu}^{2n}\|\Omega\|_{\dot{\mathcal{F}}^{2,1}_\nu}\\
	\leq 2A_\rho B_{1}\sum_{n\geq 1}2n\|f\|_{\dot{\mathcal{F}}^{1,1}_\nu}^{2n}\|f\|_{\dot{\mathcal{F}}^{2,1}_\nu} + 2A_\rho B_{2}\sum_{n\geq 1} \|f\|_{\dot{\mathcal{F}}^{1,1}_\nu}^{2n}\|f\|_{\dot{\mathcal{F}}^{2,1}_\nu}
	\end{multline*}
	Similarly,
	\begin{multline*}\int\! |\xi| e^{\nu t|\xi|} |\widehat{N_{3}(f)}(\xi)| d\xi\!\\ \leq\! \frac{A_{\mu}}{2}\!\Big(\!\sum_{n\geq 1}2n\|f\|_{\dot{\mathcal{F}}^{1,1}_\nu}^{2n-1}\!\|\mathcal{D}(\Omega)\|_{\dot{\mathcal{F}}^{1,1}_\nu}\!\|f\|_{\dot{\mathcal{F}}^{2,1}_\nu} \!+\! \sum_{n\geq 1} \|f\|_{\dot{\mathcal{F}}^{1,1}_\nu}^{2n}\!\|\mathcal{D}(\Omega)\|_{\dot{\mathcal{F}}^{2,1}_\nu}\!\Big)\!
	\end{multline*}
	for the $N_{3}$ nonlinear term. Plugging in the estimates \eqref{om3est} and \eqref{f11omega3} for $\mathcal{D}(\Omega)$, we obtain
	$$
	\int\! |\xi| e^{\nu t|\xi|} |\widehat{N_{3}(f)}(\xi)| d\xi\! \leq\!A_{\mu}A_{\rho}\Big(\!12C_{2}\!\sum_{n\geq 1}n\|f\|_{\dot{\mathcal{F}}^{1,1}_\nu}^{2n+1}\|f\|_{\dot{\mathcal{F}}^{2,1}_\nu} \!+\! 2C_{5}\!\sum_{n\geq 1} \|f\|_{\dot{\mathcal{F}}^{1,1}_\nu}^{2n+1}\|f\|_{\dot{\mathcal{F}}^{2,1}_\nu}\!\Big).
	$$
	By collecting the previous estimates, we obtain that
	\begin{equation}
	\label{normdecrease}
	\frac{d}{dt}\|f\|_{\dot{\mathcal{F}}^{1,1}_{\nu}} (t) \leq -\sigma\|f\|_{\dot{\mathcal{F}}^{2,1}_{\nu}},
	\end{equation}
	where	
	\begin{multline}\label{Cseriesform}
	\sigma = -\nu + A_{\rho} - 2A_{\rho}A_{\mu}C_{5}\|f\|_{\dot{\mathcal{F}}^{1,1}_{\nu}} - 2A_\rho B_{1} \sum_{n\geq 1}2n\|f\|_{\dot{\mathcal{F}}^{1,1}_{\nu}}^{2n} - 2A_\rho B_{2}\sum_{n\geq 1} \|f\|_{\dot{\mathcal{F}}^{1,1}_{\nu}}^{2n}\\ - A_{\mu}A_{\rho}\Big(\!12C_{2}\!\sum_{n\geq 1}n\|f\|_{\dot{\mathcal{F}}^{1,1}_{\nu}}^{2n+1}+\! 2C_{5}\!\sum_{n\geq 1} \|f\|_{\dot{\mathcal{F}}^{1,1}_{\nu}}^{2n+1}\Big).
	\end{multline}
	Writing the sums in a definite form,
	\begin{multline}\label{C}
	\sigma = -\nu + A_{\rho} - 2A_{\rho}A_{\mu}C_{5}\|f\|_{\dot{\mathcal{F}}^{1,1}_{\nu}} - 2A_\rho \Big(\frac{2B_{1}+B_{2}-B_{2}\|f\|_{\dot{\mathcal{F}}^{1,1}_{\nu}}^{2}}{(1-\|f\|_{\dot{\mathcal{F}}^{1,1}_{\nu}}^{2})^{2}}\Big)\|f\|_{\dot{\mathcal{F}}^{1,1}_{\nu}}^{2}\\- A_{\mu}A_{\rho}\Big(\frac{12C_{2}+2C_{5}-2C_{5}\|f\|_{\dot{\mathcal{F}}^{1,1}_{\nu}}^{2}}{(1-\|f\|_{\dot{\mathcal{F}}^{1,1}_{\nu}}^{2})^{2}}\Big)\|f\|_{\dot{\mathcal{F}}^{1,1}_{\nu}}^{3}.
	\end{multline}
	This completes the proof.
\end{proof}

\begin{remark}
We would also now like to comment on our estimate in the case of no viscosity jump, which is the regime considered in \cite{CCGRS16}. Setting $A_{\mu} = 0$, we obtain from \eqref{Cseriesform} that
$$\sigma =  A_{\rho}\left(1 -2\sum_{n\geq 1} (2n+1)\|f_0\|_{\mathcal{\dot{F}}^{1,1}}^{2n}\right).$$
Hence, $\sigma$ is a positive constant for $\|f_0\|_{\mathcal{\dot{F}}^{1,1}}$ satisfying
$$2\sum_{n\geq 1} (2n+1)\|f_0\|_{\mathcal{\dot{F}}^{1,1}}^{2n} < 1.$$
This is the condition for the 2D case in \cite{CCGRS16}. However, here we show that this condition is also sufficient in the 3D case, thereby improving the previous results.
\end{remark}

\section{$L^2$ maximum principle}\label{L2max}
For completeness, we present the proof of a $L^2$ maximum principle in this section for Muskat solutions in the viscosity jump regime. Given that the viscosities and densities of both fluids are constant on each domain \eqref{patchsolution}, from Darcy's law \eqref{Darcy} one obtains that the flow is irrotational away from the free boundary:
\begin{equation*}
\textup{curl } u(x,t)=0, \hspace{1cm} x\in D^1(t)\cup D^2(t).
\end{equation*}
Thus we find that the velocity comes from a potential $\phi$
\begin{equation}\label{potential}
u=\nabla \phi,
\end{equation}
and since the flow is incompressible we obtain that
$$\Delta \phi=0.$$
Now, integration by parts shows that
\begin{equation*}
0=\mu^i \int_{D^i}  \phi\Delta \phi \hspace{0.05cm} dx=-\mu^i \int_{D^i} \nabla \phi\cdot\nabla\phi\hspace{0.05cm} dx+\mu^i \int_{\partial D^i}\nabla\phi \cdot n\phi \hspace{0.05cm} d\sigma,
\end{equation*}
so using \eqref{potential} it reads as 
\begin{equation*}
-\mu^i\int_{D^i}|u|^2\hspace{0.05cm} dx+\int_{\partial D^i}u\cdot n \mu^i \phi \hspace{0.05cm}d\sigma =0.
\end{equation*}
Recalling that the normal velocity is continuous across the boundary due to the incompressibility condition, by adding the balance of both domains we can write
\begin{equation}
-\int_{\mathbb{R}^3}\mu |u|^2 \hspace{0.05cm}dx+\int_{\partial D} u\cdot n(\mu^2\phi^2-\mu^1\phi^1)\hspace{0.05cm}d\sigma=0.
\end{equation} 
Here $\phi^i$ is the potential in $D^i$.
Introducing \eqref{potential} in \eqref{Darcy} we find that
\begin{equation*}
\mu^i \phi^i=-p-\rho^i x_3.
\end{equation*}
From this and the continuity of the pressure along the boundary we obtain that
\begin{equation}\label{aux}
-\int_{\mathbb{R}^3} \mu |u|^2\hspace{0.05cm}dx=(\rho_2-\rho_1)\int_{\partial D}u\cdot n x_3\hspace{0.05cm} d\sigma.
\end{equation}
If the boundary is described as a graph
$$\partial D(t)=\{(\al,f(\al,t))\in \mathbb{R}^3:\al\in \mathbb{R}^3\},$$
since it moves with the flow one has that
\begin{equation*}
\begin{aligned}
f_t(\alpha)&=u(\alpha,f(\alpha))\cdot (-\partial_{\alpha_1}f(\alpha),-\partial_{\alpha_2}f(\alpha),1))\\
&=u(\alpha,f(\alpha))\cdot n(\alpha) \sqrt{1+(\partial_{\alpha_1}f(\alpha))^2+(\partial_{\alpha_2}f(\alpha))^2}.
\end{aligned}
\end{equation*}
Going back to \eqref{aux} we find that
\begin{equation*}
(\rho_2-\rho_1)\int_{\mathbb{R}^2}f_t(\al)f(\al)\, d\al+\int_{\mathbb{R}^3} \mu |u|^2\hspace{0.05cm}dx=0,
\end{equation*}
so by integration in time we finally obtain the $L^2$ maximum principle
\begin{equation*}
(\rho^2-\rho^1)\|f\|_{L^2}^2(t)+2\int_{\mathbb{R}^3} \mu |u|^2\hspace{0.05cm}dx=(\rho_2-\rho_1)\|f_0\|_{L^2}^2.
\end{equation*}

\section{Uniqueness}\label{uniquenesssection}

\begin{prop}\label{uniqueness}
	Consider two solutions $f$ and $g$ to the Muskat problem with initial data $f_{0},g_0\in L^{2} \cap \mathcal{\dot{F}}^{1,1}$ that satisfy the condition \eqref{condition}. Then, $$\frac{d}{dt}\|f-g\|_{\mathcal{F}^{0,1}} \leq -C \|f-g\|_{\mathcal{\dot{F}}^{1,1}},$$ and moreover, $\|f-g\|_{L^{\infty}} = 0.$
\end{prop}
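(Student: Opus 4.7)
The plan is to derive a differential inequality for $\|h\|_{\fzerone}$ where $h = f - g$, mirroring the proof of Proposition~\ref{normdecreaseprop} but one derivative lower. From the evolution equation \eqref{decompnonlinear}, $h$ satisfies
$$h_t = -A_\rho \Lambda h + [N(f) - N(g)],$$
so differentiating under the integral (using $\widehat{\Lambda h} = |\xi|\widehat{h}$) yields
$$\frac{d}{dt}\|h\|_{\fzerone} \leq -A_\rho \|h\|_{\foneone} + \|N(f) - N(g)\|_{\fzerone}.$$
The whole game is to bound the nonlinear difference by a constant strictly smaller than $A_\rho$ times $\|h\|_{\foneone}$, so that the parabolic term dominates.

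To estimate each piece $N_i(f) - N_i(g)$ I would mirror the Fourier machinery of Section~\ref{secanalytic}: expand the denominators $(1+\Delta_\beta\cdot)^{-3/2}$ as a power series, pass to Fourier space to turn products into convolutions, and exploit the cancellations in odd integrals in $\beta$ as in the argument leading to \eqref{br11xi}. The key algebraic step is to write each nonlinearity as a telescoping difference so that exactly one factor carries $h$; for instance
$$\Delta_\beta f\, \nabla f - \Delta_\beta g\, \nabla g = \Delta_\beta h\, \nabla f + \Delta_\beta g\, \nabla h,$$
and a polynomial $(\Delta_\beta f)^{2n} - (\Delta_\beta g)^{2n}$ splits into $2n$ summands, each carrying exactly one $\Delta_\beta h$. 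Each summand yields a convolution containing $|\xi||\widehat{h}|$ together with factors $|\xi||\widehat{f}|$, $|\xi||\widehat{g}|$, and one derivative of the potential jump, so Young's inequality produces an estimate of the form
$$\|N(f) - N(g)\|_{\fzerone} \leq K\big(\|f\|_{\foneone}, \|g\|_{\foneone}, A_\mu\big) \|h\|_{\foneone},$$
with $K$ an explicit rational-polynomial analogue of the $\sigma$ appearing in \eqref{C}.

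The main obstacle is the difference $\Omega_f - \Omega_g$, because $\Omega$ is defined implicitly through \eqref{Omegagraph} and the double-layer operator $\md$ itself depends on the interface. I would write (using self-explanatory subscripts)
$$\md_f(\Omega_f) - \md_g(\Omega_g) = \md_f(\Omega_f - \Omega_g) + \big(\md_f - \md_g\big)(\Omega_g),$$
which via \eqref{Omegagraph} gives the fixed-point identity
$$(I - A_\mu \md_f)(\Omega_f - \Omega_g) = -2A_\rho\, h + A_\mu (\md_f - \md_g)(\Omega_g).$$
The operator $(I - A_\mu \md_f)$ is invertible on $\fzerone$ (and on $\foneone$) by exactly the Neumann-series bookkeeping already done in Propositions~\ref{f01omega} and \ref{f11vorticity}, yielding bounds $\|\Omega_f - \Omega_g\|_{\fzerone} + \|\Omega_f - \Omega_g\|_{\foneone} \lesssim \|h\|_{\fzerone} + \|h\|_{\foneone}$ with constants controlled by $\|f\|_{\foneone}$, $\|g\|_{\foneone}$ and $|A_\mu|$. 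Keeping track of these constants so that the resulting $K$ remains strictly below $A_\rho$ under the smallness hypothesis \eqref{condition} is the genuine bookkeeping difficulty, though structurally identical to Section~\ref{secanalytic}.

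Combining everything, under \eqref{condition} one obtains
$$\frac{d}{dt}\|h\|_{\fzerone} \leq -C\, \|h\|_{\foneone}$$
for some $C > 0$, which in particular shows $\|h\|_{\fzerone}$ is non-increasing in time. Specializing to the uniqueness setting $f_0 = g_0$, the initial value $\|h\|_{\fzerone}(0) = 0$ forces $\|h\|_{\fzerone}(t) = 0$ for all $t \geq 0$, and the continuous embedding $\fzerone \hookrightarrow L^\infty$ noted at the start of this section then yields $\|f-g\|_{L^\infty} = 0$, completing the proof.
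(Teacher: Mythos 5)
Your proposal is correct and follows essentially the same route as the paper: set up the differential inequality for $\|f-g\|_{\mathcal{F}^{0,1}}$, telescope the polynomial factors in the Taylor-expanded kernels so that exactly one factor carries the difference, invert $(I-A_\mu\mathcal{D}_f)$ on the implicit equation for $\Omega_f-\Omega_g$ via the same Neumann-series bookkeeping as Propositions~\ref{f01omega} and~\ref{f11vorticity}, and observe that the resulting constant is the same $\sigma$ from Proposition~\ref{normdecreaseprop} under the substitutions $\|\partial_{\alpha_i}\Omega\|_{\dot{\mathcal{F}}^{1,1}}\leftrightarrow\|\partial_{\alpha_i}\Omega(f)-\partial_{\alpha_i}\Omega(g)\|_{\mathcal{F}^{0,1}}$, $\|f\|_{\dot{\mathcal{F}}^{2,1}}\leftrightarrow\|f-g\|_{\dot{\mathcal{F}}^{1,1}}$. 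The final step via $\mathcal{F}^{0,1}\hookrightarrow L^\infty$ is also the same.
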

\begin{proof}
	Using \eqref{decompnonlinear}, we can write, as before
	\begin{align*}
	\frac{d}{dt}\|f-g\|_{\mathcal{F}^{0,1}_{\nu}} &= \int_{\mathbb{R}^{2}}  \frac{1}{2}\frac{\overline{\widehat{(f-g)}}(\xi)\partial_{t}(\widehat{f-g})(\xi) + \widehat{(f-g)}(\xi)\partial_{t}(\overline{\widehat{f-g}})(\xi)}{|\widehat{(f-g)}(\xi)|}\\
	&\leq (\nu-A_{\rho})\|f-g\|_{\mathcal{\dot{F}}^{1,1}} + \sum_{i=1}^{3} \int |\widehat{N_{i}(f-g)}(\xi)|d\xi
	\end{align*}
	where $N_{i}$ are the nonlinear terms given by \eqref{N1N2N3}. For example,
	$$\int |\widehat{N_{1}(f-g)}(\xi)|d\xi = \frac{A_\mu}{2} \int_{\mathbb{R}^{2}} |\widehat{\Lambda D(\Omega(f))}(\xi) - \widehat{\Lambda D(\Omega(g))}(\xi)| d\xi,$$ where $\Omega(f)$ is the term $\Omega$ in the case of the solution $f$ and similarly for $\Omega(g)$. We define the terms $N_{2}$ and $N_{3}$ later. As earlier in the paper, we use the decomposition
	\eqref{riesz}, where $\partial_{\alpha_{i}}\mathcal{D}(\Omega)$ is given by \eqref{partialD}. Hence, we can write for $i=1$
	\begin{multline*}
	\int_{\mathbb{R}^{2}} |\widehat{\partial_{\alpha_{1}} D(\Omega(f))}(\xi) - \widehat{\partial_{\alpha_{1}} D(\Omega(g))}(\xi)| d\xi \\ \leq \int_{\mathbb{R}^{2}} |\widehat{BR_{1}(f)}(\xi) - \widehat{BR_{1}(g)}(\xi)| d\xi + \int_{\mathbb{R}^{2}} |\widehat{BR_{3}(f)\partial_{\alpha_{1}}f}(\xi) - \widehat{BR_{3}(g)\partial_{\alpha_{1}}g}(\xi)| d\xi.
	\end{multline*}
	First, we consider the $BR_{1} = BR_{11} + BR_{12}$ term. Using the Taylor expansion, we can write $BR_{11}(f)$ as 
	\begin{equation*}
	\begin{aligned}
	BR_{11}(f) &= -\frac{1}{8\pi}\sum_{n \geq 0} \text{p.v.} \int_{\mathbb{R}^{2}}  \Big(\omega_{3}(f)(\alpha-\beta)  (-1)^{n} a_{n} (\Delta_{\beta}f(\alpha))^{2n}  \\
	&\quad-  \omega_{3}(f)(\alpha+\beta) (-1)^{n} a_{n} (\Delta_{-\beta}f(\alpha))^{2n}\Big) \frac{\beta_{2}d\beta}{|\beta|^{3}} \\ 
	&\eqdef BR_{11}^{+}(f) - BR_{11}^{-}(f).
	\end{aligned}
	\end{equation*}
	Next, we get that the integrand of the n-th term in $BR_{11}(f) - BR_{11}(g)$ is given by
	\begin{multline}\label{addsubunique}
	BR_{11}^{+}(f) - BR_{11}^{+}(g)  = (-1)^{n} a_{n}(p_{1}p_{2}^{2n} - q_{1}q_{2}^{2n}) \\ = (-1)^{n} a_{n} (p_{1}p_{2}^{2n} - q_{1}p_{2}^{2n} + q_{1}p_{2}^{2n} - q_{1}q_{2}p_{2}^{2n-1} + q_{1}q_{2}p_{2}^{2n-1} - \ldots + q_{1}q_{2}^{2n-1}p_{2} -  q_{1}q_{2}^{2n})\\
	=(-1)^{n} a_{n} \Big( (p_{1}- q_{1})p_{2}^{2n} + q_{1}p_{2}^{2n-1}(p_{2}-q_{2}) + q_{1}q_{2}p_{2}^{2n-2}(p_{2}-q_{2}) +\\ \ldots +  q_{1}q_{2}^{2n-1}(p_{2}-q_{2})\Big)
	\end{multline}
	where $p_{1} = \omega_{3}(f)(\alpha-\beta)$, $p_{2} = \Delta_{\beta}(f)(\alpha)$, $q_{1} = \omega_{3}(g)(\alpha-\beta)$ and $q_{2} = \Delta_{\beta}(g)(\alpha)$.
	We do the same for $BR_{11}^{-}$ by defining $p_{-1} = \omega_{3}(f)(\alpha+\beta)$, $p_{-2} = \Delta_{-\beta}(f)(\alpha)$, $q_{-1} = \omega_{3}(g)(\alpha+\beta)$ and $q_{-2} = \Delta_{-\beta}(g)(\alpha)$. Next, using the Fourier arguments to bound $BR_{11}$ as in Section \ref{secw}, we can obtain that
	\begin{multline}\label{uniquebr11}
	|\widehat{BR_{11}(f)}(\xi)-\widehat{BR_{11}(g)}(\xi)|\\ \leq \frac{1}{2} \sum_{n\geq 0} |\widehat{\omega_{3}(f)-\omega_{3}(g)}|\ast (\ast^{2n}|\cdot||\hat{f}|) + |\widehat{\omega_{3}(g)}|\ast (\ast^{2n-1}|\cdot||\hat{f}|)\ast |\cdot||\widehat{f-g}(\cdot)| \\  + \ldots + |\widehat{\omega_{3}(g)}|\ast (\ast^{2n-1}|\cdot||\hat{g}|)\ast |\cdot||\widehat{f-g}(\cdot)|.
	\end{multline}
	Hence, applying Young's inequality,
	\begin{multline}\label{br11unique}
	\int_{\mathbb{R}^{2}}|\widehat{BR_{11}(f)}(\xi)-\widehat{BR_{11}(g)}(\xi)| d\xi \\ \leq \frac{1}{2} \sum_{n\geq 0} \|\omega_{3}(f)-\omega_{3}(g)\|_{\mathcal{F}^{0,1}}\|f\|_{\mathcal{\dot{F}}^{1,1}}^{2n} +  \|\omega_{3}(g)\|_{\mathcal{F}^{0,1}}\|f\|_{\mathcal{\dot{F}}^{1,1}}^{2n-1}\|f-g\|_{\mathcal{\dot{F}}^{1,1}}\\ + \ldots +  \|\omega_{3}(g)\|_{\mathcal{F}^{0,1}}\|g\|_{\mathcal{\dot{F}}^{1,1}}^{2n-1}\|f-g\|_{\mathcal{\dot{F}}^{1,1}}.
	\end{multline}
	Next,
	\begin{multline*}
	\omega_{3}(f) -\omega_{3}(g)\\ = \partial_{\alpha_{2}}D(\Omega(f))\partial_{\alpha_{1}}f - \partial_{\alpha_{2}}D(\Omega(g))\partial_{\alpha_{1}}g - \partial_{\alpha_{1}}D(\Omega(f))\partial_{\alpha_{2}}f + \partial_{\alpha_{1}}D(\Omega(g))\partial_{\alpha_{2}}g
	\\=\partial_{\alpha_{2}}(D(\Omega(f))-D(\Omega(g)))\partial_{\alpha_{1}}f + \partial_{\alpha_{2}}D(\Omega(g))(\partial_{\alpha_{1}}(f-g))\\- \partial_{\alpha_{1}}(D(\Omega(f))-D(\Omega(g)))\partial_{\alpha_{2}}f - \partial_{\alpha_{1}}D(\Omega(g))(\partial_{\alpha_{2}}(f-g)).
	\end{multline*}
	Hence,
	\begin{multline}\label{omega3unique}
	\|\omega_{3}(f) -\omega_{3}(g)\|_{\mathcal{F}^{0,1}}\\ \leq \|\partial_{\alpha_{1}}(D(\Omega(f))-D(\Omega(g)))\|_{\mathcal{F}^{0,1}}\|f\|_{\mathcal{\dot{F}}^{1,1}} + \|\partial_{\alpha_{2}}(D(\Omega(f)-\Omega(g)))\|_{\mathcal{F}^{0,1}}\|f\|_{\mathcal{\dot{F}}^{1,1}}\\ + 
	\|\partial_{\alpha_{1}}D(\Omega(g))\|_{\mathcal{F}^{0,1}}\|f-g\|_{\mathcal{\dot{F}}^{1,1}} + \|\partial_{\alpha_{2}}D(\Omega(g))\|_{\mathcal{F}^{0,1}}\|f-g\|_{\mathcal{\dot{F}}^{1,1}}.
	\end{multline}
	Furthermore, for $BR_{12}$ we similarly obtain
	\begin{multline*}
	\|BR_{12}(f)-BR_{12}\|_{\mathcal{F}^{0,1}} \\\leq \frac{1}{2}\sum_{n\geq 0}  \|\partial_{\alpha_{1}}\Omega(g)-\partial_{\alpha_{1}}\Omega(g)\|_{\dot{\mathcal{F}}^{0,1}}\|f\|_{\mathcal{\dot{F}}^{1,1}}^{2n+1} +  \|\partial_{\alpha_{1}}\Omega(g)\|_{\mathcal{F}^{0,1}}\|f\|_{\mathcal{\dot{F}}^{1,1}}^{2n}\|f-g\|_{\mathcal{\dot{F}}^{1,1}}\\ + \ldots +  \|\partial_{\alpha_{1}}\Omega(g)\|_{\mathcal{F}^{0,1}}\|g\|_{\mathcal{\dot{F}}^{1,1}}^{2n}\|f-g\|_{\dot{\mathcal{F}}^{1,1}}.
	\end{multline*}
	Next, for the $BR_{3}$ integral term
	\begin{multline*}
	\int_{\mathbb{R}^{2}} |\widehat{BR_{3}(f)\partial_{\alpha_{1}}f}(\xi) - \widehat{BR_{3}(g)\partial_{\alpha_{1}}g}(\xi)| d\xi\\ \leq \|BR_{3}(f)-BR_{3}(g)\|_{\mathcal{F}^{0,1}}\|f\|_{\mathcal{\dot{F}}^{1,1}} + \|BR_{3}(g)\|_{\dot{\mathcal{F}^{0,1}}}\|f-g\|_{\dot{\mathcal{F}}^{1,1}}.
	\end{multline*}
	Next,
	\begin{multline*}
	\|BR_{3}(f)-BR_{3}(g)\|_{\mathcal{F}^{0,1}} \\ \leq \frac{1}{2}\sum_{i=1,2}\sum_{n\geq 0}  \|\partial_{\alpha_{i}}\Omega(f)-\partial_{\alpha_{i}}\Omega(g)\|_{\mathcal{F}^{0,1}}\|f\|_{\mathcal{\dot{F}}^{1,1}}^{2n} +  \|\partial_{\alpha_{i}}\Omega(g)\|_{\mathcal{F}^{0,1}}\|f\|_{\mathcal{\dot{F}}^{1,1}}^{2n-1}\|f-g\|_{\mathcal{\dot{F}}^{1,1}}\\ + \ldots +  \|\partial_{\alpha_{i}}\Omega(g)\|_{\mathcal{F}^{0,1}}\|g\|_{\mathcal{\dot{F}}^{1,1}}^{2n-1}\|f-g\|_{\dot{\mathcal{F}}^{1,1}}.
	\end{multline*}
	The key point to note here is that these estimates are precisely those used to prove the vorticity estimates in the norm $\dot{\mathcal{F}}^{1,1}$ in Proposition \ref{f11vorticity} if we replace the quantities
	\begin{equation}\label{swap}
	\begin{aligned}
	\|\partial_{\alpha_{i}}\Omega(f)\|_{\dot{\mathcal{F}}^{1,1}} \text{   or    } \|\partial_{\alpha_{i}}\Omega(g)\|_{\dot{\mathcal{F}}^{1,1}}&\leftrightarrow \|\partial_{\alpha_{i}}\Omega(g)-\partial_{\alpha_{i}}\Omega(g)\|_{\mathcal{F}^{0,1}}\\
	\|f\|_{\dot{\mathcal{F}}^{2,1}} \text{   or    } \|g\|_{\dot{\mathcal{F}}^{2,1}}&\leftrightarrow \|f-g\|_{\dot{\mathcal{F}}^{1,1}},
	\end{aligned}
	\end{equation}
	and notice by counting terms that the computation of \eqref{addsubunique} creates the same effect on the estimates as the effect created by the triangle inequality (or product rule) in the case of estimates of Proposition \ref{f11vorticity}. Therefore, continuing to compute the estimates for uniqueness as above and comparing with the estimates of Section \ref{secw} and \ref{secanalytic} by using the substitutions \eqref{swap}, we obtain the analogous estimate, for example:
	$$\|\partial_{\alpha_{i}}\Omega(f)-\partial_{\alpha_{i}}\Omega(g)\|_{\mathcal{F}^{0,1}} \leq 2A_{\rho}C_{4}\|f-g\|_{\dot{\mathcal{F}}^{1,1}}.$$
	These vorticity estimates and performing similar computations on the nonlinear terms $N_{i}$, we can see that
	$$\frac{d}{dt}\|f-g\|_{\mathcal{F}^{0,1}} \leq -\sigma \|f-g\|_{\dot{\mathcal{F}}^{1,1}}$$
	where $\sigma$ is the same positive constant as in Proposition \ref{normdecreaseprop}. It can be seen by the swap of terms described above in \eqref{swap}.\end{proof}

\section{Regularization}\label{regularization}

In this section, we describe the regularization of the system together with the limit process to get bona-fide and not just a priori estimates for the Muskat problem. We denote the heat kernel $\zeta_\varepsilon$ as an approximation to the identity where $\varepsilon$ plays the role of time in such a way that $\zeta_\varepsilon$ converges to the identity as $\varepsilon\to 0^+$. We consider the following regularization of the system
\begin{equation}\label{Epsiloncontourequation}
\partial_tf^\varepsilon=-\frac{A_\rho}2\Lambda(\zeta_\varepsilon*\zeta_\varepsilon*f^\varepsilon)+\zeta_\varepsilon*(N(\zeta_\varepsilon*\zeta_\varepsilon*f^\varepsilon,\Omega^\varepsilon)),\quad f^\varepsilon(x,0)=(\zeta_\varepsilon*f_0)(x).
\end{equation}
where $N(\cdot,\cdot)$ is given by \eqref{decompnonlinear} and \eqref{N1N2N3}, and $\Omega^\varepsilon$ by
\begin{equation}\label{EpsilonOmegagraph}
\Omega^\varepsilon(\alpha,t)=A_\mu \mathcal{D}^\varepsilon(\Omega^\varepsilon)(\alpha,t)
-2A_\rho\zeta_\varepsilon*\zeta_\varepsilon* f^\varepsilon(\alpha,t).
\end{equation}
The operator $\mathcal{D}^\varepsilon(\Omega^\varepsilon)$ is written as follows
\begin{equation}\label{EpsilonDOmegagraph}
\mathcal{D}^\varepsilon(\Omega^\varepsilon)(\alpha)\! =\! \frac{1}{2\pi}\!\! \int_{\mathbb{R}^{2}}\!\!\! \frac{\frac{\beta}{|\beta|}\!\cdot\!\nabla_{\alpha}(\zeta_\varepsilon\!*\!\zeta_\varepsilon\!*\! f^\varepsilon)(\alpha\!-\!\beta)\!-\!\Delta_{\beta}(\zeta_\varepsilon\!*\zeta_\varepsilon\!*\!f^ \varepsilon)(\alpha)}{(1+(\Delta_{\beta}(\zeta_\varepsilon\!*\!\zeta_\varepsilon\!*\!f^\varepsilon)(\alpha))^{2})^{3/2}}\frac{\Omega^\varepsilon(\alpha\!-\!\beta)}{|\beta|^{2}}d\beta.
\end{equation}
Integration by parts also provides the identities
\begin{align}
\begin{split}
\partial_{\alpha_i}\mathcal{D}^\varepsilon(\Omega^\varepsilon)&=-\frac{1}{2\pi}\! \int_{\mathbb{R}^{2}}\!\! \frac{\Delta_{\beta}(\partial_{\alpha_i}(\zeta_\varepsilon\!*\zeta_\varepsilon\!*\!f^ \varepsilon))(\alpha)}{(1+(\Delta_{\beta}(\zeta_\varepsilon\!*\!\zeta_\varepsilon\!*\!f^\varepsilon)(\alpha))^{2})^{3/2}}\frac{\beta\cdot\nabla_{\alpha}\Omega^\varepsilon(\alpha\!-\!\beta)}{|\beta|^{2}}d\beta \\
+\frac{1}{2\pi}\! &\int_{\mathbb{R}^{2}}\!\! \frac{\frac{\beta}{|\beta|}\cdot\nabla_{\alpha}(\zeta_\varepsilon\!*\!\zeta_\varepsilon\!*\! f^\varepsilon)(\alpha\!-\!\beta)\!-\!\Delta_{\beta}(\zeta_\varepsilon\!*\zeta_\varepsilon\!*\!
	f^\varepsilon)(\alpha)}{(1+(\Delta_{\beta}(\zeta_\varepsilon\!*\!\zeta_\varepsilon\!*\!f^\varepsilon)(\alpha))^{2})^{3/2}}\frac{\partial_{\alpha_i}\Omega^\varepsilon(\alpha\!-\!\beta)}{|\beta|^{2}}d\beta.
\end{split}
\end{align}

Then it is easy to estimate $\Omega^\epsilon$ as in Section \ref{secw} in terms of $\zeta_\varepsilon\!*\!\zeta_\varepsilon\!*\!f^\varepsilon$ with the condition $\|\zeta_\varepsilon*\zeta_\varepsilon*f^\varepsilon\|_{\dot{\mathcal{F}}^{1,1}}(t)<1$. These estimates provide a local existence result using the classical Picard theorem on the Banach space $C([0,T_\varepsilon];H^4)$. We find the abstract evolution system given by $\partial_tf^\varepsilon=G(f^\varepsilon)$ where $G$ is Lipschitz on the open set $\{g(x)\in H^4:\|g\|_{\dot{\mathcal{F}}^{1,1}}<1\}$. We remember that $f^\varepsilon(x,0)\in H^4$ due to $f_0\in L^2$.   The next step is to reproduce estimate \eqref{normdecrease} for $s=1$. As the convolutions are taken with the heat kernel, it is easy to prove analyticity for $f^\varepsilon$ so that for $\nu$ small enough we find that $\|f^\epsilon\|_{\dot{\mathcal{F}}^{1,1}_\nu}$ bounded. Even more, we know that $\|f^\epsilon\|_{\dot{\mathcal{F}}^{1,1}_\nu}(t)<k(|A_\mu|)$, as continuity in time provides that this quantity is close in size to 
$\|f^\epsilon\|_{\dot{\mathcal{F}}^{1,1}_\nu}(0)=\|f^\epsilon\|_{\dot{\mathcal{F}}^{1,1}}(0)\leq \|f_0\|_{\dot{\mathcal{F}}^{1,1}}<k(|A_\mu|)$ if $T_\varepsilon>0$ is small enough. Therefore, in checking its evolution as in Section \ref{secanalytic} we find that
$$
\frac{d}{dt}\|f^\epsilon\|_{\dot{\mathcal{F}}^{1,1}_\nu}
(t)\leq -C\|\zeta_\varepsilon\!*\zeta_\varepsilon\!*\!
f^\varepsilon\|_{\dot{\mathcal{F}}^{2,1}_\nu},
$$
so that integration in time provides
\begin{equation}\label{epsineqfun}
\|f^\varepsilon\|_{\dot{\mathcal{F}}^{1,1}_\nu}
(t)+C\int_0^t\|\zeta_\varepsilon\!*\zeta_\varepsilon\!*\!
f^\varepsilon\|_{\dot{\mathcal{F}}^{2,1}_\nu}(\tau)d\tau\leq \|f_0\|_{\dot{\mathcal{F}}^{1,1}}.
\end{equation}
Next we repeat the computations in Section \ref{L2section} for the regularized system. It is possible to find that

$$
\|f^\varepsilon\|_{L^{2}_{\nu}}(t)\leq \|f_{0}\|_{L^{2}}\exp\Big(R(\|f_0\|_{\dot{\mathcal{F}}^{1,1}})\Big).$$
Energy estimates provide 
$$
\frac{d}{dt}\|f^\varepsilon\|^2_{H^4}\leq P(\|\zeta_\varepsilon\!*\!
f^\varepsilon\|^2_{H^4})
$$
where $P$ is a polynomial function. Then, using that $$\|\zeta_\varepsilon\!*\!
f^\varepsilon\|_{H^4}\leq C(\varepsilon)\|f^\varepsilon\|_{L^2}\leq C(\varepsilon)\|f_{0}\|_{L^{2}}\exp\Big(C(\|f_0\|_{\dot{\mathcal{F}}^{1,1}})\Big)
$$
we are able to extend the solutions in $C([0,T];H^4)$ for any $T>0$.

Next, we find a candidate for a solution by taking the limit $\varepsilon\to 0^+$ after proving that $f^{\epsilon}$ is Cauchy $L^\infty(0,T;\mathcal{F}^{0,1})$. From now on, we consider $\varepsilon\geq \varepsilon'>0$. Then, as in Section \ref{uniquenesssection}, we are able to find that
\begin{align*}
\|f^\varepsilon-f^{\varepsilon'}\|_{\mathcal{F}^{0,1}}(t)\leq & \|\zeta_\varepsilon*f_0-\zeta_{\varepsilon'}*f_0\|_{\mathcal{F}^{0,1}}+I_1(t)+I_2(t)
\end{align*}
where
$$
I_1(t)=\int_0^t\frac{A_\rho}2\|\Lambda(\zeta_\varepsilon*\zeta_\varepsilon*f^{\varepsilon'}-\zeta_{\varepsilon'}*\zeta_{\varepsilon'}*f^{\varepsilon'})\|_{\mathcal{F}^{0,1}}(\tau)d\tau87,
$$
and
$$
I_2(t)=\int_0^t\|\zeta_\varepsilon*N(\zeta_\varepsilon*\zeta_\varepsilon*f^{\varepsilon'},\Omega^{\varepsilon'})-\zeta_{\varepsilon'}*N(\zeta_{\varepsilon'}*\zeta_{\varepsilon'}*f^{\varepsilon'},\Omega^{\varepsilon'})\|_{\mathcal{F}^{0,1}}(s)ds.
$$
As before, in order to get the inequality above, we use the decay from the dissipation term to absorb the bounds for $\zeta_\varepsilon*N(\zeta_\varepsilon*\zeta_\varepsilon*f^{\varepsilon},\Omega^{\varepsilon})-\zeta_{\varepsilon}*N(\zeta_{\varepsilon}*\zeta_{\varepsilon}*f^{\varepsilon'},\Omega^{\varepsilon'})$. 
Then, using the mean value theorem in the heat kernel on the Fourier side, it is possible to get
\begin{equation}\label{geteps}
\|\zeta_\varepsilon*f_0-\zeta_{\varepsilon'}*f_0\|_{\mathcal{F}^{0,1}}\leq C\|f_0\|_{\dot{\mathcal{F}}^{1,1}}\varepsilon^{1/2}.
\end{equation}

Similarly 
$$
I_1(t)\leq C \int_0^t\|\zeta_{\varepsilon'}*\zeta_{\varepsilon'}*f^{\varepsilon'}\|_{\mathcal{F}^{2,1}}(s)ds \,\varepsilon^{1/2} \leq C\|f_0\|_{\dot{\mathcal{F}}^{1,1}}\varepsilon^{1/2}.
$$
A further splitting in the mollifiers, together with the inequality
$$
\|\zeta_{\varepsilon}*\zeta_{\varepsilon}*f^{\varepsilon'}\|_{\mathcal{F}^{s,1}}(s)\leq 
\|\zeta_{\varepsilon'}*\zeta_{\varepsilon'}*f^{\varepsilon'}\|_{\mathcal{F}^{s,1}}(s), \quad s\geq 0,
$$ 
allows us to find for the nonlinear term, as before, the following bound
$$
I_2(t)\leq C(\|f_0\|_{\dot{\mathcal{F}}^{1,1}}) \int_0^t\|\zeta_{\varepsilon'}*\zeta_{\varepsilon'}*f^{\varepsilon'}\|_{\mathcal{F}^{2,1}}(s)ds \,\varepsilon^{1/2} \leq C(\|f_0\|_{\dot{\mathcal{F}}^{1,1}})\varepsilon^{1/2}.
$$
It yields finally
\begin{equation}\label{cauchy}
\|f^\varepsilon-f^{\varepsilon'}\|_{\mathcal{F}^{0,1}}(t)\leq C(\|f_0\|_{\dot{\mathcal{F}}^{1,1}})\varepsilon^{1/2},
\end{equation}
so that we are done finding a limit $f\in L^\infty(0,T;\mathcal{F}^{0,1})$. The interpolation inequality 
$$
\|g\|_{\dot{\mathcal{F}}^{1,1}}^2\leq \|g\|_{\mathcal{F}^{0,1}}\|g\|_{\dot{\mathcal{F}}^{2,1}}
$$
provides
\begin{align}
\begin{split}\label{solutionconverges}
\int_0^t\|\zeta_{\varepsilon}*\zeta_{\varepsilon}*f^{\varepsilon}-&\zeta_{\varepsilon'}*\zeta_{\varepsilon'}*f^{\varepsilon'}\|_{\dot{\mathcal{F}}^{1,1}}^2(s)ds\leq \\
&\int_0^tA(s)(\|\zeta_{\varepsilon}*\zeta_{\varepsilon}*f^{\varepsilon}\|_{\dot{\mathcal{F}}^{2,1}}(s)+\|\zeta_{\varepsilon'}*\zeta_{\varepsilon'}*f^{\varepsilon'}\|_{\dot{\mathcal{F}}^{2,1}}(s))ds,
\end{split}
\end{align}
where
$$
A(s)=\|\zeta_{\varepsilon}*\zeta_{\varepsilon}*(f^{\varepsilon}-f^{\varepsilon'})\|_{\mathcal{F}^{0,1}}(s)+\|\zeta_{\varepsilon}*\zeta_{\varepsilon}*f^{\varepsilon'}-\zeta_{\varepsilon'}*\zeta_{\varepsilon'}*f^{\varepsilon'}\|_{\mathcal{F}^{0,1}}(s).
$$
The first term in $A(s)$ is controlled by \eqref{cauchy} and for the second term we apply a similar approach as in \eqref{geteps} to find   
$$
A(s)\leq C(\|f_0\|_{\dot{\mathcal{F}}^{1,1}})\varepsilon^{1/2}.
$$
Using \eqref{epsineqfun} in \eqref{solutionconverges} we find finally
\begin{align}
\begin{split}\label{solutionconvergesStrong}
\int_0^t\|\zeta_{\varepsilon}*\zeta_{\varepsilon}*f^{\varepsilon}-\zeta_{\varepsilon'}*\zeta_{\varepsilon'}*f^{\varepsilon'}\|_{\dot{\mathcal{F}}^{1,1}}^2(s)ds\leq C(\|f_0\|_{\dot{\mathcal{F}}^{1,1}})\varepsilon^{1/2},
\end{split}
\end{align}
which provides strong convergence of $\zeta_{\varepsilon}*\zeta_{\varepsilon}*f^{\varepsilon}$ to $f$ in $L^2(0,T;\dot{\mathcal{F}}^{1,1})$.

Next we can extract a subsequence $f^{\varepsilon_n}$ in such a way that 
$$
(\widehat{f}^{\varepsilon_n}(\xi,t),\exp{(-8\pi^2\varepsilon_n|\xi|^2)}\widehat{f}^{\varepsilon_n}(\xi,t))\to (\widehat{f}(\xi,t),\widehat{f}(\xi,t))
$$
pointwise for almost every $(\xi,t)\in\mathbb{R}^2\times[0,T]$. Therefore for $t\in [0,T]\smallsetminus Z$ with measure $|Z|=0$ it is possible to find the same pointwise for almost every $\xi\in\mathbb{R}^2$. Fatou's lemma allows us to conclude that for $t\in [0,T]\smallsetminus Z$
and
$$
M(t)=\|f\|_{\dot{\mathcal{F}}^{1,1}_\nu}
(t)+C\int_0^t\|f\|_{\dot{\mathcal{F}}^{2,1}_\nu}(s)
$$
it is possible to obtain
\begin{align*}
\begin{split}
M(t)\leq\liminf_{n\to\infty}\Big(\|f^{\varepsilon_n}\|_{\dot{\mathcal{F}}^{1,1}_\nu}
(t)+C\int_0^t\|\zeta_{\varepsilon_n}\!*\zeta_{\varepsilon_n}\!*\!
f^{\varepsilon_n}\|_{\dot{\mathcal{F}}^{2,1}_\nu}(s)ds\Big)\leq \|f_0\|_{\dot{\mathcal{F}}^{1,1}},
\end{split}
\end{align*}

The strong convergence of $\zeta_{\varepsilon}*\zeta_{\varepsilon}*f^{\varepsilon}$ to $f$ in $L^2(0,T;\dot{\mathcal{F}}^{1,1})$ together with the regularity found for $f$ allow us to take the limit in equations (\ref{Epsiloncontourequation},\ref{EpsilonOmegagraph},\ref{EpsilonDOmegagraph}) to find $f$ as a solution to the original Muskat equations (\ref{decompnonlinear}-\ref{DOmegagraph}). Now we use the approach in Section \ref{L2max} to get the $L^2$ maximum principle for $f$.

\section{Gain of $L^{2}$ Derivatives with Analytic Weight}\label{L2section}

In this section, we first show gain of $L^{2}$ regularity. In particular, we prove uniform bounds in $L^{2}_{\nu}=\mathcal{F}^{0,2}_{\nu}$, which will be used to show decay of analytic $L^{2}$ norms, and more prominently, the ill-posedness argument of Section \ref{illposedsection}:

\begin{thm}\label{L2}
	Suppose $f_{0}\in L^{2}\cap \dot{\mathcal{F}}^{1,1}$ and $\|f_{0}\|_{\dot{\mathcal{F}}^{1,1}}$ satisfying the condition \eqref{condition}. Then, $f(t) \in L^{2}_{\nu}$ instantly for all $t>0$. Moreover
	$$\|f\|_{L^{2}_{\nu}}^2(t) \leq \|f_{0}\|_{L^{2}}^2\exp(R(\|f_{0}\|_{\dot{\mathcal{F}}^{1,1}})),$$
	with $R$ a rational function. In particular, this implies that $f(t)\in H^{s}$ instantly for all $t>0$.
\end{thm}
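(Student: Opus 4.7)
The plan is to differentiate $\|f\|_{L^2_\nu}^2$ in time using the evolution equation \eqref{decompnonlinear}, extract the parabolic dissipation, and then derive a nonlinear estimate of the form $C\|f\|_{\dot{\mathcal{F}}^{2,1}_\nu}\|f\|_{L^2_\nu}^2$ that closes via Gronwall using the time-integrability of $\|f\|_{\dot{\mathcal{F}}^{2,1}_\nu}$ provided by Proposition~\ref{normdecreaseprop}. Writing $\hat{f}_t = -A_\rho|\xi|\hat f + \widehat{N(f)}$ and choosing $0 < \nu < A_\rho$, a direct differentiation in Fourier gives
\begin{equation*}
	\tfrac{1}{2}\tfrac{d}{dt}\|f\|_{L^2_\nu}^2 \leq -(A_\rho - \nu)\|f\|_{\dot{H}^{1/2}_\nu}^2 + \int_{\mathbb{R}^2} e^{2\nu t|\xi|}|\widehat{N(f)}(\xi)||\hat f(\xi)|\,d\xi,
\end{equation*}
and the dissipation term may be discarded.

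For the nonlinear pairing I use the pointwise Fourier series bounds \eqref{N2est}, \eqref{N3est}, together with the analogous bound for $\widehat{N_1} = \tfrac{A_\mu}{2}|\xi|\widehat{\mathcal{D}(\Omega)}$ built from the Birkhoff--Rott expansions of Section~\ref{secw}. First I distribute one factor of $e^{\nu t|\xi|}$ into the convolution kernels via the triangle inequality $|\xi|\leq\sum_j|\xi_j-\xi_{j+1}|$, converting convolution factors to their analytic-weighted versions, and use the vorticity estimates in Propositions~\ref{f01omega} and \ref{f11vorticity} (extended to analytic norms as in Remark~\ref{vorticityremark}) to reduce all $\Omega$-quantities to $f$-quantities. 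The $n$-th term in the resulting series has $2n+1$ one-derivative convolution factors paired against the outer $e^{\nu t|\cdot|}|\hat f|$. I then apply subadditivity $|\xi_1+\cdots+\xi_{2n+1}|\leq\sum|\xi_j|$ once more to transfer one additional derivative onto a single convolution factor. Placing that factor in $L^1$ (contributing $\|f\|_{\dot{\mathcal{F}}^{2,1}_\nu}$), the remaining $2n-1$ one-derivative factors in $L^1$ (contributing $\|f\|_{\dot{\mathcal{F}}^{1,1}_\nu}^{2n-1}$), and two copies of $e^{\nu t|\cdot|}|\hat f|$ in $L^2$ via the bound $\int|\hat f(\eta)||\hat f(\zeta+\eta)|d\eta\leq\|f\|_{L^2_\nu}^2$, produces
\begin{equation*}
	\int_{\mathbb{R}^2} e^{2\nu t|\xi|}|\widehat{N(f)}(\xi)||\hat f(\xi)|\,d\xi \leq R_1\!\left(\|f\|_{\dot{\mathcal{F}}^{1,1}_\nu}\right)\|f\|_{\dot{\mathcal{F}}^{2,1}_\nu}\|f\|_{L^2_\nu}^2,
\end{equation*}
where $R_1$ is a rational function, regular on $[0,k(|A_\mu|))$, arising as a convergent geometric series in $\|f\|_{\dot{\mathcal{F}}^{1,1}_\nu}$.

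Since \eqref{instantanalyticity} yields $\|f\|_{\dot{\mathcal{F}}^{1,1}_\nu}(t)\leq\|f_0\|_{\dot{\mathcal{F}}^{1,1}}<k(|A_\mu|)$, the prefactor satisfies $R_1(\|f\|_{\dot{\mathcal{F}}^{1,1}_\nu}(t))\leq R_1(\|f_0\|_{\dot{\mathcal{F}}^{1,1}})$ uniformly in $t$. Gronwall's inequality and the $L^1_t$ bound $\int_0^t\|f\|_{\dot{\mathcal{F}}^{2,1}_\nu}(\tau)d\tau\leq\|f_0\|_{\dot{\mathcal{F}}^{1,1}}/\sigma$ from \eqref{instantanalyticity} then give
\begin{equation*}
	\|f\|_{L^2_\nu}^2(t) \leq \|f_0\|_{L^2}^2\exp\!\left(2R_1(\|f_0\|_{\dot{\mathcal{F}}^{1,1}})\|f_0\|_{\dot{\mathcal{F}}^{1,1}}/\sigma\right) = \|f_0\|_{L^2}^2\exp\!\left(R(\|f_0\|_{\dot{\mathcal{F}}^{1,1}})\right),
\end{equation*}
with $R$ rational, as claimed. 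The instantaneous $H^s$ regularity for any $s\geq 0$ then follows from the elementary pointwise estimate $|\xi|^{2s}e^{-2\nu t|\xi|}\leq(s/(e\nu t))^{2s}$, which yields $\|f\|_{\dot{H}^s}(t)\leq C_s(\nu t)^{-s}\|f\|_{L^2_\nu}(t)$ for every $t>0$.

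The main obstacle I anticipate is producing the nonlinear estimate precisely in the form $C\|f\|_{\dot{\mathcal{F}}^{2,1}_\nu}\|f\|_{L^2_\nu}^2$. A direct application of Young's inequality with a single $L^2$ slot produces only a bound of the shape $\|\Omega\|_{\dot{H}^1_\nu}\|f\|_{L^2_\nu}\lesssim\|f\|_{\dot{H}^1_\nu}\|f\|_{L^2_\nu}$, and neither $\|f\|_{\dot{H}^1_\nu}$ nor any obvious interpolant of it is integrable in time against the quantities controlled by Proposition~\ref{normdecreaseprop}. The fix is the second application of the subadditivity of $|\xi|$ inside the convolution, which simultaneously concentrates one extra derivative onto one factor and frees a second $L^2_\nu$ slot for $\hat f$, thereby replacing the ill-suited $\dot{H}^1_\nu\cdot L^2_\nu$ bound by the desired $\dot{\mathcal{F}}^{2,1}_\nu\cdot(L^2_\nu)^2$ structure and closing the Gronwall argument.
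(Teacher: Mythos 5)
Your overall strategy---differentiate $\|f\|_{L^2_\nu}^2$, bound the nonlinear pairing by something times $\|f\|_{L^2_\nu}^2$, and close with Gronwall using the time-integrability from Proposition~\ref{normdecreaseprop}---matches the paper's, but your specific nonlinear estimate has a genuine gap, and it is precisely the point you flag as the ``main obstacle'' yourself.

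When you expand $\int e^{2\nu t|\xi|}|\widehat{N(f)}(\xi)||\hat f(\xi)|\,d\xi$ into the multilinear pairing with $\sum_j\xi_j=\xi$ and apply subadditivity to one of the interior frequencies $|\eta_k|$, you get $|\eta_k|\leq |\xi|+\sum_{j\neq k}|\eta_j|$. The terms where the derivative lands on one of the \emph{other} interior frequencies give what you want, $\|f\|_{L^2_\nu}^2\,\|f\|_{\dot{\mathcal{F}}^{2,1}_\nu}$ (after converting $\Omega$ to $f$ via Section~\ref{secw}). But the term where the derivative lands on the \emph{outer} frequency $\xi$ produces $|\xi|e^{\nu t|\xi|}|\hat f(\xi)|$ as the outer factor, and the best you can do with the remaining factors is $\|f\|_{\dot H^1_\nu}\|\Omega\|_{L^2_\nu}\|f\|_{\dot{\mathcal{F}}^{1,1}_\nu}^{2n}$, i.e.\ $\|f\|_{\dot H^1_\nu}\|f\|_{L^2_\nu}$ up to polynomial prefactors. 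This is not $\lesssim \|f\|_{\dot{\mathcal{F}}^{2,1}_\nu}\|f\|_{L^2_\nu}^2$ (the interpolation $\|f\|_{\dot H^1_\nu}^2\leq\|f\|_{L^2_\nu}\|f\|_{\dot H^2_\nu}$ gives $\dot H^2_\nu$, not $\dot{\mathcal{F}}^{2,1}_\nu$, and there is no Hölder bound $\|f\|_{\dot H^1_\nu}^2 \lesssim \|f\|_{L^2_\nu}\|f\|_{\dot{\mathcal{F}}^{2,1}_\nu}$). Worse, $\|f\|_{\dot H^1_\nu}\|f\|_{L^2_\nu}\geq\|f\|_{\dot H^{1/2}_\nu}^2$ by Cauchy--Schwarz, so this term \emph{cannot} even be absorbed by the dissipation you discarded. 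So your Gronwall argument does not close.

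The paper handles this obstruction by never allowing a full derivative to land on the outer factor: it first flips the pairing so that $|\xi||\hat\Omega(\xi)|e^{\nu t|\xi|}$ is outer, then splits $|\xi|=|\xi|^{1/2}\cdot|\xi|^{1/2}$, keeping $|\xi|^{1/2}$ on $\hat\Omega$ (so the outer factor lives in $\dot H^{1/2}_\nu$, not $\dot H^1_\nu$) and distributing the remaining $|\xi|^{1/2}$ by subadditivity. This produces only terms proportional to $\|\Omega\|_{\dot H^{1/2}_\nu}\|f\|_{\dot H^{1/2}_\nu}$ (absorbed by the dissipation after choosing $\nu$ small and $\epsilon$ small in the weighted Young inequality) and $\|\Omega\|_{\dot H^{1/2}_\nu}\|f\|_{L^2_\nu}\|f\|_{\dot{\mathcal{F}}^{3/2,1}_\nu}$ (converted by Young's product inequality to $\|f\|_{\dot H^{1/2}_\nu}^2$-type terms, again absorbed, plus $\|f\|_{L^2_\nu}^2\|f\|_{\dot{\mathcal{F}}^{3/2,1}_\nu}^2$, which is closed by Gronwall via the interpolation $\|f\|_{\dot{\mathcal{F}}^{3/2,1}_\nu}^2\leq \|f\|_{\dot{\mathcal{F}}^{1,1}_\nu}\|f\|_{\dot{\mathcal{F}}^{2,1}_\nu}$ together with \eqref{instantanalyticity}). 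So the dissipation term is essential and cannot be discarded, and the integrable quantity is $\|f\|_{\dot{\mathcal{F}}^{3/2,1}_\nu}^2$ rather than $\|f\|_{\dot{\mathcal{F}}^{2,1}_\nu}$. The Gronwall step and the final $H^s$ smoothing via $|\xi|^{2s}e^{-2\nu t|\xi|}\lesssim (\nu t)^{-2s}$ in your write-up are fine once the correct intermediate estimate is in place.
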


\begin{proof}
	Differentiating the 
	\begin{multline*}
	\frac{1}{2}\frac{d}{dt}\|f\|_{L^{2}_{\nu}}^{2}(t)  = (\nu-A_{\rho})\|f\|_{\dot{H}^{1/2}_{\nu}}^{2} + \frac{A_{\mu}}{2}\int |\xi|e^{2\nu t |\xi|}|\widehat{\mathcal{D}(\Omega)}(\xi)||\hat{f}(\xi)|d\xi  \\+ \int e^{2\nu t |\xi|}|\widehat{N_{2}}(\xi)||\hat{f}(\xi)|d\xi + \int e^{2\nu t |\xi|}|\widehat{N_{3}}(\xi)||\hat{f}(\xi)|d\xi.
	\end{multline*}
	We now bound the nonlinear terms. For example,
	$$\int |\xi|e^{2\nu t |\xi|}|\widehat{\mathcal{D}(\Omega)}(\xi)||\hat{f}(\xi)|d\xi \leq \|f\|_{\dot{H}^{1/2}_{\nu}}\|\mathcal{D}(\Omega)\|_{\dot{H}^{1/2}_{\nu}},$$
	and using the bounds on $\hat{N}_i$ \eqref{N2est} in \eqref{N3est} followed by the product rule we obtain that
	\begin{multline*}
	\int e^{2\nu t |\xi|}|\widehat{N}_{2}(\xi)||\hat{f}(\xi)|d\xi \leq \sum_{n\geq 1} \int e^{2\nu t |\xi|}|\hat{f}(\xi)| \left(|\cdot||\hat{\Omega}(\cdot)|\ast(\ast^{2n}|\cdot||\hat{f}(\cdot)|)\right)(\xi) d\xi\\
	\leq \sum_{n\geq 1} \int e^{\nu t |\xi|}|\hat{f}(\xi)|( |\cdot||\hat{\Omega}(\cdot)|e^{\nu t |\cdot|})\ast(\ast^{2n}|\cdot|e^{\nu t |\cdot|}|\hat{f}(\cdot)|)(\xi)d\xi\\
	\leq \sum_{n\geq 1} \int |\xi||\hat{\Omega}(\xi)|e^{\nu t |\xi|}\cdot (e^{\nu t |\cdot|}|\hat{f}(\cdot)|) \ast(\ast^{2n}|\cdot|e^{\nu t |\cdot|}|\hat{f}(\cdot)|)d\xi\\
	\leq \sum_{n\geq 1} 2n  \int |\xi|^{\frac{1}{2}}|\widehat{\Omega}(\xi)|e^{\nu t |\xi|}\cdot(e^{\nu t |\cdot|}|\hat{f}(\cdot)|)\ast (|\cdot|^{\frac{3}{2}}e^{\nu t |\cdot|}|\hat{f}(\cdot)|) \ast(\ast^{2n-1}|\cdot|e^{\nu t |\cdot|}|\hat{f}(\cdot)|)d\xi \\
	+ \sum_{n\geq 1}\int |\xi|^{\frac{1}{2}}|\widehat{\Omega}(\xi)|e^{\nu t |\xi|}\cdot (|\cdot|^{\frac{1}{2}}e^{\nu t |\cdot|}|\hat{f}(\cdot)|) \ast(\ast^{2n}|\cdot|e^{\nu t |\cdot|}|\hat{f}(\cdot)|)d\xi\\
	\leq  \sum_{n\geq 1} 2n \|\Omega\|_{\dot{H}^{1/2}_{\nu}} \|f\|_{L^{2}_{\nu}}\|f\|_{\dot{\mathcal{F}}^{3/2,1}_{\nu}}\|f\|_{\dot{\mathcal{F}}^{1,1}_{\nu}}^{2n-1} + \|\Omega\|_{\dot{H}^{1/2}_{\nu}} \|f\|_{\dot{H}^{1/2}_{\nu}}\|f\|_{\dot{\mathcal{F}}^{1,1}_{\nu}}^{2n}\\
	\leq  \sum_{n\geq 1} 2n\frac{\epsilon_{n}}{2}\|\Omega\|_{\dot{H}^{1/2}_{\nu}}^{2} +2n \frac{1}{2\epsilon_{n}}\|f\|_{L^{2}_{\nu}}^{2}\|f\|_{\dot{\mathcal{F}}^{3/2,1}_{\nu}}^{2}\|f\|_{\dot{\mathcal{F}}^{1,1}_{\nu}}^{4n-2} \\+ \|\Omega\|_{\dot{H}^{1/2}_{\nu}} \|f\|_{\dot{H}^{1/2}_{\nu}}\|f\|_{\dot{\mathcal{F}}^{1,1}_{\nu}}^{2n},
	\end{multline*}
	where the last line is obtained using Young's inequality for products. We set $\epsilon_{n} = \epsilon/n^{3}$ for some small constant $\epsilon > 0$ that we can pick.
	We can bound the other terms of $N_{2}$ and $N_{3}$ similarly.
	It remains to bound $\|\mathcal{D}(\Omega)\|_{\dot{H}^{1/2}_{\nu}}$ and $\|\Omega\|_{\dot{H}^{1/2}_{\nu}}$. First,
	$$ \|\Omega\|_{\dot{H}^{1/2}_{\nu}} \leq A_{\mu} \|\mathcal{D}(\Omega)\|_{\dot{H}^{1/2}_{\nu}} + 2A_{\rho}\|f\|_{\dot{H}^{1/2}_{\nu}}.$$
	Hence, we need to bound $\|\mathcal{D}(\Omega)\|_{\dot{H}^{1/2}_{\nu}} $ appropriately:
	\begin{align*}
	\|\mathcal{D}(\Omega)\|_{\dot{H}^{1/2}_{\nu}} &\leq 2\sum_{n\geq 0}  \||\xi|^{\frac{1}{2}}e^{\nu t |\xi|} (\ast^{2n+1}|\cdot||\hat{f}(\cdot)|)\ast |\widehat{\Omega}(\cdot)|\|_{L^{2}_{\nu}}\\
	&\leq 2\sum_{n\geq 0}  \|f\|_{\dot{\mathcal{F}}^{1,1}_{\nu}}^{2n+1}\|\Omega\|_{\dot{H}^{1/2}_{\nu}} + 2(2n+1)\|f\|_{\dot{\mathcal{F}}^{3/2,1}_{\nu}}\|f\|_{\dot{\mathcal{F}}^{1,1}_{\nu}}^{2n}\|\Omega\|_{L^{2}_{\nu}}.
	\end{align*}
	Using this estimate for $\mathcal{D}(\Omega)$,
	\begin{multline*}\|\Omega\|_{\dot{H}^{1/2}_{\nu}} \leq(1-2A_{\mu} \sum_{n\geq 0}  \|f\|_{\dot{\mathcal{F}}^{1,1}_{\nu}}^{2n+1})^{-1}\\\cdot \Big(2A_{\mu} \sum_{n\geq 0} (2n+1)\|f\|_{\dot{\mathcal{F}}^{3/2,1}_{\nu}}\|f\|_{\dot{\mathcal{F}}^{1,1}_{\nu}}^{2n}\|\Omega\|_{L^{2}_{\nu}} + 2A_{\rho}\|f\|_{\dot{H}^{1/2}_{\nu}}\Big).
	\end{multline*}
	For $\|f\|_{\dot{\mathcal{F}}^{1,1}_{\nu}}$ of our medium size, the inverted term on the right hand side above is a bounded constant. Also,
	\begin{multline*}
	\|\mathcal{D}(\Omega)\|_{\dot{H}^{1/2}_{\nu}} \leq \sum_{n\geq 0}  \|f\|_{\dot{\mathcal{F}}^{1,1}_{\nu}}^{2n+1}(1-2A_{\mu} \sum_{n\geq 0}  \|f\|_{\dot{\mathcal{F}}^{1,1}_{\nu}}^{2n+1})^{-1}\\\cdot \Big(2A_{\mu} \sum_{n\geq 0} (2n+1)\|f\|_{\dot{\mathcal{F}}^{3/2,1}_{\nu}}\|f\|_{\dot{\mathcal{F}}^{1,1}_{\nu}}^{2n}\|\Omega\|_{L^{2}_{\nu}} + 2A_{\rho}\|f\|_{\dot{H}^{1/2}_{\nu}}\Big)\\ + (2n+1)\|f\|_{\dot{\mathcal{F}}^{3/2,1}_{\nu}}\|f\|_{\dot{\mathcal{F}}^{1,1}_{\nu}}^{2n}\|\Omega\|_{L^{2}_{\nu}}\\
	\leq C(\|f\|_{\dot{\mathcal{F}}^{1,1}}) \Big(\|f\|_{\dot{\mathcal{F}}^{3/2,1}_{\nu}}\|\Omega\|_{L^{2}_{\nu}} + \|f\|_{\dot{H}^{1/2}_{\nu}}\Big).
	\end{multline*}
	Now, it can be seen that $\|\Omega\|_{L^{2}_{\nu}} \leq \tilde{C}(\|f\|_{\dot{\mathcal{F}}^{1,1}_{\nu}})\|f\|_{L^{2}_{\nu}}$ where $\tilde{C}(\|f\|_{\dot{\mathcal{F}}^{1,1}}) \rightarrow 0$ as $\|f\|_{\dot{\mathcal{F}}^{1,1}_{\nu}}\rightarrow 0$.
	Thus, summarizing, we can pick $\epsilon > 0$ small enough in the Young's inequality step in the bounds of the integral terms of $N_{2}$ and the other nonlinear terms,
	\begin{multline*}
	\frac{1}{2}\frac{d}{dt}\|f\|_{L^{2}_{\nu}}^{2}(t)  \leq  \Big(\nu-A_{\rho} + c(\epsilon,\|f\|_{\dot{\mathcal{F}}^{1,1}_{\nu}})\Big)\|f\|_{\dot{H}^{1/2}_{\nu}}^{2} + \frac{1}{2\epsilon}\tilde{c}(\|f\|_{\dot{\mathcal{F}}^{1,1}_{\nu}})\|f\|_{\dot{\mathcal{F}}^{3/2,1}_{\nu}}^{2}\|f\|_{L^{2}_{\nu}}^{2},
	\end{multline*}
	where $c(\epsilon,\|f\|_{\dot{\mathcal{F}}^{1,1}}) \rightarrow 0$ as $\|f\|_{\dot{\mathcal{F}}^{1,1}_{\nu}}\rightarrow 0$ or as $\epsilon\rightarrow 0$ and $\tilde{c}(\|f\|_{\dot{\mathcal{F}}^{1,1}}) \rightarrow 0$ as $\|f\|_{\dot{\mathcal{F}}^{1,1}_{\nu}}\rightarrow 0$. Hence, picking $\epsilon$ sufficiently small, but not $0$, the first term on the right hand side is negative. By Gronwall's inequality, we obtain
	$$\|f\|_{L^{2}_{\nu}}^2(t) \leq \|f_{0}\|_{L^{2}}^2\exp\Big(\frac{1}{2\epsilon}\tilde{c}(\|f\|_{\dot{\mathcal{F}}^{1,1}_{\nu}}(t))\int_{0}^{t}\|f\|_{\dot{\mathcal{F}}^{3/2,1}_{\nu}}^{2}(\tau) d\tau\Big).$$
	Finally, the exponential term on the right hand side is uniformly bounded because by interpolation
	$$\int_{0}^{t}\!\|f\|_{\dot{\mathcal{F}}^{3/2,1}_{\nu}}^{2}(\tau) d\tau \leq \int_{0}^{t}\!\|f\|_{\dot{\mathcal{F}}^{1,1}_{\nu}}(\tau) \|f\|_{\dot{\mathcal{F}}^{2,1}_{\nu}}(\tau) d\tau \leq  \|f_0\|_{\dot{\mathcal{F}}^{1,1}}\int_{0}^{t} \!\|f\|_{\dot{\mathcal{F}}^{2,1}_{\nu}}(\tau) d\tau
	\leq \|f_{0}\|_{\dot{\mathcal{F}}^{1,1}}^{2}.$$
This completes the proof.
\end{proof}
Next, recall the notation
$$\|f\|_{\dot{H}^{s}_{\nu}}=\|f\|_{\dot{\mathcal{F}}^{s,2}_{\nu}} =  \int |\xi|^{2s} e^{2|\xi| t \nu} |\hat{f}(\xi)|^{2} d\xi.$$
We will use the following inequality on the time derivative of the $H^{s}_{\nu}$ norm when performing decay estimates in $L^{2}$ spaces:
\begin{prop}\label{Hsdecreaseprop}
	Let $1/2\leq s\leq 3/2$ and assume $f_{0}\in \dot{\mathcal{F}}^{1,1}\cap L^{2}$ satisfying \eqref{condition}. Then,
	\begin{equation}\label{Hsdecrease}
	\frac{d}{dt} \|f\|_{\dot{H}^{s}_{\nu}} \leq -C \|f\|_{\dot{H}^{s+1/2}_{\nu}}.
	\end{equation}
\end{prop}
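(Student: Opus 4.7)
The plan is to differentiate the squared norm $\|f\|_{\dot H^s_\nu}^2$ in time, use the evolution equation \eqref{decompnonlinear} to replace $f_t$, and then control the resulting nonlinear contributions by a fraction of the dissipation produced by the linear $-A_\rho\Lambda f$ term. More precisely, I compute
\begin{equation*}
\frac{1}{2}\frac{d}{dt}\|f\|_{\dot H^s_\nu}^2 = (\nu - A_\rho)\|f\|_{\dot H^{s+1/2}_\nu}^2 + \mathrm{Re}\int |\xi|^{2s}e^{2\nu t|\xi|}\,\widehat{N(f)}(\xi)\,\overline{\hat f(\xi)}\,d\xi,
\end{equation*}
and apply Cauchy--Schwarz to the nonlinear piece to bound it by $\|f\|_{\dot H^{s+1/2}_\nu}\|N(f)\|_{\dot H^{s-1/2}_\nu}$. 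The goal is then to show
\begin{equation*}
\|N(f)\|_{\dot H^{s-1/2}_\nu} \leq \varepsilon(\|f_0\|_{\dot{\mathcal F}^{1,1}})\|f\|_{\dot H^{s+1/2}_\nu},
\end{equation*}
with $\varepsilon$ small enough that, together with the choice of $\nu$, the first term dominates and gives a negative constant times $\|f\|_{\dot H^{s+1/2}_\nu}^2$. Dividing by $\|f\|_{\dot H^s_\nu}$ yields the stated inequality.

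To bound the three pieces of $N(f)$ in $\dot H^{s-1/2}_\nu$, I use the pointwise frequency estimates \eqref{N2est}, \eqref{N3est}, and the analogous one for $N_1 = \tfrac{A_\mu}{2}\Lambda\mathcal{D}(\Omega)$. After inserting the analytic weight via $e^{\nu t|\xi|}\leq \prod_j e^{\nu t|\xi_j-\xi_{j+1}|}$, I distribute the multiplier $|\xi|^{s-1/2}$ across a convolution of $(2n+1)$ factors. Here the restriction $1/2\leq s\leq 3/2$ is crucial: for $0\leq s-1/2\leq 1$ one has the subadditive inequality
\begin{equation*}
|\xi|^{s-1/2} \leq |\xi-\xi_1|^{s-1/2} + \sum_{j}|\xi_j-\xi_{j+1}|^{s-1/2} + |\xi_{2n+1}|^{s-1/2},
\end{equation*}
so by Young's inequality each resulting term produces a factor $\|f\|_{\dot{\mathcal F}^{1,1}_\nu}^{2n}$ (which sums geometrically thanks to the smallness \eqref{condition}) multiplied by one top-order factor of the form $\|f\|_{\dot{\mathcal F}^{s+1/2,2}_\nu}=\|f\|_{\dot H^{s+1/2}_\nu}$ and one factor of $\|\Omega\|$ or $\|\mathcal{D}(\Omega)\|$ in an $L^2$-based analytic norm.

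To close the estimate one needs $\|\Omega\|_{\dot H^{s-1/2}_\nu}$ and $\|\mathcal{D}(\Omega)\|_{\dot H^{s+1/2}_\nu}$. These follow by repeating the $(I - A_\mu \mathcal{D})$ inversion argument of Section \ref{secw} in the analytic $L^2$ setting, exactly as was done for $s=1/2$ at the end of the proof of Theorem \ref{L2}: writing $\|\Omega\|_{\dot H^{s-1/2}_\nu}\leq A_\mu\|\mathcal{D}(\Omega)\|_{\dot H^{s-1/2}_\nu} + 2A_\rho\|f\|_{\dot H^{s-1/2}_\nu}$ and bounding $\mathcal{D}(\Omega)$ by expanding its kernel via the Taylor series for $(1+x^2)^{-3/2}$, yields $\|\Omega\|_{\dot H^{s-1/2}_\nu}\lesssim \|f\|_{\dot{\mathcal F}^{1,1}_\nu}\|f\|_{L^2_\nu} + \|f\|_{\dot H^{s-1/2}_\nu}$, and similarly at one derivative higher. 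Using the uniform $L^2_\nu$ control from Theorem \ref{L2} and the apriori bound $\|f\|_{\dot{\mathcal F}^{1,1}_\nu}<k(|A_\mu|)$ from Proposition \ref{normdecreaseprop}, all lower-order factors are absorbed into the small constant $\varepsilon$.

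The main obstacle is bookkeeping: each of $N_1,N_2,N_3$ is a series of multilinear convolutions, and when distributing $|\xi|^{s-1/2}$ one must carefully track where the top-order derivative lands so that it produces precisely $\|f\|_{\dot H^{s+1/2}_\nu}$ and no higher norm. The range $1/2\leq s\leq 3/2$ is sharp in this approach: below $1/2$ the subadditivity of $|\xi|^{s-1/2}$ fails, and above $3/2$ the top derivative lands at order $s+1/2>2$, where $\|\Omega\|$ is no longer controlled by $\|f\|_{\dot{\mathcal F}^{2,1}}$ from the estimates of Section \ref{secw}. Choosing $\nu$ small depending on the initial smallness then gives the strict dissipation claimed.
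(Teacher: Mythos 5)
Your overall strategy matches the paper: differentiate $\|f\|_{\dot H^s_\nu}^2$, isolate the dissipation $(\nu-A_\rho)\|f\|_{\dot H^{s+1/2}_\nu}^2$, apply Cauchy--Schwarz to the nonlinear piece so it appears as $\|f\|_{\dot H^{s+1/2}_\nu}\cdot\|N(f)\|_{\dot H^{s-1/2}_\nu}$, distribute the analytic weight multiplicatively, and use the subadditivity of $|\xi|^{s-1/2}$ for $0\leq s-1/2\leq 1$ together with Young's inequality. The identification of the range $1/2\leq s\leq 3/2$ as what makes this subadditivity available is also correct. However, there is a genuine gap in the closure step for $\Omega$ and $\mathcal{D}(\Omega)$, and it is exactly where the paper's proof spends its effort.

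You write that the $(I-A_\mu\mathcal{D})$ inversion ``exactly as was done for $s=1/2$ at the end of the proof of Theorem \ref{L2}'' yields $\|\Omega\|_{\dot H^{s-1/2}_\nu}\lesssim \|f\|_{\dot{\mathcal F}^{1,1}_\nu}\|f\|_{L^2_\nu}+\|f\|_{\dot H^{s-1/2}_\nu}$. This bound cannot close the estimate for Proposition \ref{Hsdecreaseprop}. After Cauchy--Schwarz and Young's inequality, the factor of $\Omega$ (or $\mathcal D(\Omega)$) that arises is multiplied by exactly one copy of $\|f\|_{\dot H^{s+1/2}_\nu}$, so every term must ultimately be bounded by $\varepsilon\|f\|_{\dot H^{s+1/2}_\nu}^2$. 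Your intermediate bound produces instead the terms $\|f\|_{\dot H^{s+1/2}_\nu}\cdot\|f\|_{\dot{\mathcal F}^{1,1}_\nu}\|f\|_{L^2_\nu}$ (a bounded constant times a single power of the dissipation norm, not quadratic) and $\|f\|_{\dot H^{s+1/2}_\nu}\cdot\|f\|_{\dot H^{s-1/2}_\nu}$ (a cross term with a lower-order norm that is not controlled by $\|f\|_{\dot H^{s+1/2}_\nu}^2$ on $\mathbb{R}^2$). Neither is absorbable, and the argument would only produce a Gr\"onwall-type bound as in Theorem \ref{L2}, not the strict decrease $\frac{d}{dt}\|f\|_{\dot H^s_\nu}\leq -C\|f\|_{\dot H^{s+1/2}_\nu}$. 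That Gr\"onwall structure is appropriate at $s=0$ precisely because $\|\omega_i\|_{\dot H^{-1/2}_\nu}$ is out of reach; it is the wrong template for $s\geq 1/2$.

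The paper instead routes through the differentiated quantities. Using $\Lambda = R_1\partial_{\alpha_1}+R_2\partial_{\alpha_2}$ and $\partial_{\alpha_i}\mathcal D(\Omega) = 2BR\cdot\partial_{\alpha_i}X$, everything is expressed in terms of the Birkhoff--Rott pieces and the $\omega_i$'s. The decisive estimates, proved by rerunning the Section \ref{secw} inversion in $\dot H^{s-1/2}_\nu$ exactly as in Remark \ref{vorticityremark}, are the \emph{linear, homogeneous} bounds
\begin{equation*}
\|\omega_i\|_{\dot H^{s-1/2}_\nu} \leq 2A_\rho C_{4,\nu}\|f\|_{\dot H^{s+1/2}_\nu}\quad (i=1,2),
\qquad
\|\omega_3\|_{\dot H^{s-1/2}_\nu}\leq 4A_\mu A_\rho\|f\|_{\dot{\mathcal F}^{1,1}}^2(C_{5,\nu}+3C_{2,\nu})\|f\|_{\dot H^{s+1/2}_\nu},
\end{equation*}
with constants that are small when $\|f\|_{\dot{\mathcal F}^{1,1}_\nu}$ is small. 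When the weight $|\xi|^{s-1/2}$ lands on the $\Omega$-factor in \eqref{N2est}, the result is $\|\Omega\|_{\dot H^{s+1/2}_\nu}\approx\|\omega_1\|_{\dot H^{s-1/2}_\nu}+\|\omega_2\|_{\dot H^{s-1/2}_\nu}\lesssim\|f\|_{\dot H^{s+1/2}_\nu}$, with no leftover lower-order norm. This is the estimate you need to state and prove; the $\|\Omega\|_{L^2_\nu}$ or $\|f\|_{\dot H^{s-1/2}_\nu}$ terms of Theorem \ref{L2} must not appear. Once you have these homogeneous vorticity bounds, the rest of your outline goes through unchanged.
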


\begin{proof}
	Differentiating the quantity $\|f\|_{\dot{H}^{s}_{\nu}}$ and integrating by parts we obtain
	\begin{align*}
	\frac{1}{2}\frac{d}{dt} \|f\|_{\dot{H}^{s}_{\nu}}^{2} = \nu \|f\|_{\dot{H}^{s+1/2}_{\nu}}^{2} - A_{\rho} \|f\|_{\dot{H}^{s+1/2}_{\nu}}^{2} + K_{1} + K_{2} + K_{3},
	\end{align*}
	where the terms $K_i$ corresponds to the nonlinear terms $N_i$ in \eqref{decompnonlinear}. Then we have that
	$$ K_{1} \leq \frac{A_\mu}{2}\int |\xi|^{2s}e^{2 \nu t |\xi|} |\hat{f}(\xi)| |\Lambda \mathcal{D}(\Omega)(\xi)| d\xi.$$
	Using the identity $\Lambda = R_{1}\partial_{\alpha_{1}} + R_{2}\partial_{\alpha_{2}}$, it suffices to prove the following bounds on $\partial_{\alpha_{i}}\mathcal{D}(\Omega)$:
	\begin{align*}
	\int |\xi|^{2s} e^{2 \nu t |\xi|}|\hat{f}(\xi)| |R_{1}\partial_{\alpha_{1}}\mathcal{D}(\Omega)(\xi)| d\xi &\leq \int |\xi|^{2s} |\hat{f}(\xi)| |\partial_{\alpha_{1}}\mathcal{D}(\Omega)(\xi)| d\xi \\
	&\leq \|f\|_{\dot{H}^{s+1/2}_{\nu}}\|\partial_{\alpha_{1}}\mathcal{D}(\Omega)\|_{\dot{H}^{s-1/2}_{\nu}}.
	\end{align*}
	Hence, it suffices to appropriately bound $\|\partial_{\alpha_{1}}\mathcal{D}(\Omega)\|_{\dot{H}^{s-1/2}_{\nu}}$. Using \eqref{partialD} we have that
	\begin{multline*}
	\|\partial_{\alpha_{1}}\mathcal{D}(\Omega)\|_{\dot{H}^{s-1/2}_{\nu}} \leq 2\|BR_{1}\|_{\dot{H}^{s-1/2}_{\nu}} + 2\|BR_{3}\partial_{\alpha_{1}}f\|_{\dot{H}^{s-1/2}_{\nu}}\\
	\leq 2(\|BR_{1}\|_{\dot{H}^{s-1/2}_{\nu}} + \|BR_{3}\|_{\dot{H}^{s-1/2}_{\nu}}\|f\|_{\dot{\mathcal{F}}^{1,1}_{\nu}}\\ + \|f\|_{\dot{H}^{s+1/2}_{\nu}}\|BR_{3}\|_{\mathcal{F}^{0,1}_{\nu}}).
	\end{multline*}
	Similarly to previous estimates in Section \ref{secw}, we use the triangle inequality and Young's inequality to obtain that
	\begin{multline*}
	\|BR_{11}\|_{\dot{H}^{s-1/2}_{\nu}} \leq \frac{1}{2} \Bigg( \sum_{n\geq 0} \Big\||\xi|^{s-1/2}e^{\nu t |\xi|}\Big(|\widehat{\omega_{3}}(\cdot)|\ast (\ast^{2n} |\cdot||\hat{f}(\cdot)| )\Big)(\xi)\Big\|_{L^{2}_{\xi}} \Bigg) \\
	\leq \frac{1}{2} \Bigg( \sum_{n\geq 1} 2n  \Big\|(e^{\nu t |\cdot|}|\widehat{\omega_{3}}(\cdot)|)\ast (\ast^{2n-1} |\cdot|e^{\nu t |\cdot|}|\hat{f}(\cdot)|) \ast |\cdot|^{s+1/2}e^{\nu t |\cdot|}|\hat{f}(\cdot)| \Big\|_{L^{2}} \Bigg) \\+\frac{1}{2} \Bigg( \sum_{n\geq 0}  \Big\|(|\cdot|^{s-1/2}|e^{\nu t |\cdot|}\widehat{\omega_{3}}(\cdot)|)\ast (\ast^{2n} |\cdot|e^{\nu t |\cdot|}|\hat{f}(\cdot)|) \Big\|_{L^{2}} \Bigg)  \\
	\leq \frac{1}{2} \sum_{n\geq 1} 2n  \|\omega_{3}\|_{\mathcal{F}^{0,1}_{\nu}}\|f\|_{\dot{\mathcal{F}}^{1,1}_{\nu}}^{2n-1}\|f\|_{\dot{H}^{s+1/2}_{\nu}} +  \frac{1}{2} \sum_{n\geq 0}  \|\omega_{3}\|_{\dot{H}^{s-1/2}_{\nu}}\|f\|_{\dot{\mathcal{F}}^{1,1}_{\nu}}^{2n},
	\end{multline*}
	and
	\begin{multline*}
	\|BR_{12}\|_{\dot{H}^{s-1/2}_{\nu}} \leq \frac{1}{2} \Bigg( \sum_{n\geq 0} \Big\||\xi|^{s-1/2}e^{\nu t |\xi|}\Big(|\widehat{\omega_{2}}(\cdot)|\ast (\ast^{2n+1} |\cdot||\hat{f}(\cdot)| )\Big)(\xi)\Big\|_{L^{2}_{\xi}} \Bigg) \\
	\leq \frac{1}{2} \Bigg( \sum_{n\geq 0} (2n+1)  \Big\|(e^{\nu t |\cdot|}|\widehat{\omega_{2}}(\cdot)|)\ast (\ast^{2n} |\cdot|e^{\nu t |\cdot|}|\hat{f}(\cdot)|) \ast |\cdot|^{s+1/2}e^{\nu t |\cdot|}|\hat{f}(\cdot)| \Big\|_{L^{2}} \Bigg) \\+\frac{1}{2} \Bigg( \sum_{n\geq 0} \Big\|(|\cdot|^{s-1/2}|e^{\nu t |\cdot|}\widehat{\omega_{2}}(\cdot)|)\ast (\ast^{2n+1} |\cdot|e^{\nu t |\cdot|}|\hat{f}(\cdot)|) \Big\|_{L^{2}} \Bigg)  \\
	\leq \frac{1}{2} \sum_{n\geq 0} (2n+1)  \|\omega_{2}\|_{\mathcal{F}^{0,1}_{\nu}}\|f\|_{\dot{\mathcal{F}}^{1,1}_{\nu}}^{2n}\|f\|_{\dot{H}^{s+1/2}_{\nu}} +  \frac{1}{2} \sum_{n\geq 0} \|\omega_{2}\|_{\dot{H}^{s-1/2}_{\nu}}\|f\|_{\dot{\mathcal{F}}^{1,1}_{\nu}}^{2n+1},
	\end{multline*}
	and
	\begin{multline*}
	\|BR_{3}\|_{\dot{H}^{s-1/2}_{\nu}} \leq \frac{1}{2} \Bigg( \sum_{n\geq 0}  \Big\||\xi|^{s-1/2}e^{\nu t |\xi|}\Big((|\widehat{\omega_{1}}(\cdot)|+|\widehat{\omega_{2}}(\cdot)|)\ast (\ast^{2n} |\cdot||\hat{f}(\cdot)| )\Big)(\xi)\Big\|_{L^{2}_{\xi}} \Bigg) \\
	\leq \frac{1}{2} \Bigg( \sum_{n\geq 1} 2n  \Big\|(e^{\nu t |\cdot|}|\widehat{\omega_{1}}(\cdot)|+e^{\nu t |\cdot|}|\widehat{\omega_{2}}(\cdot)|)\ast (\ast^{2n-1} |\cdot|e^{\nu t |\cdot|}|\hat{f}(\cdot)|) \ast |\cdot|^{s+1/2}e^{\nu t |\cdot|}|\hat{f}(\cdot)| \Big\|_{L^{2}} \\+ \sum_{n\geq 0}  \Big\|(|\cdot|^{s-1/2}|e^{\nu t |\cdot|}\widehat{\omega_{1}}(\cdot)|+|\cdot|^{s-1/2}|e^{\nu t |\cdot|}\widehat{\omega_{2}}(\cdot)|)\ast (\ast^{2n} |\cdot|e^{\nu t |\cdot|}|\hat{f}(\cdot)|) \Big\|_{L^{2}} \Bigg)  \\
	\leq \frac{1}{2} \sum_{n\geq 1} 2n  (\|\omega_{1}\|_{\mathcal{F}^{0,1}_{\nu}}+\|\omega_{2}\|_{\mathcal{F}^{0,1}_{\nu}})\|f\|_{\dot{\mathcal{F}}^{1,1}_{\nu}}^{2n-1}\|f\|_{\dot{H}^{s+1/2}_{\nu}} \\+  \frac{1}{2} \sum_{n\geq 0} (\|\omega_{1}\|_{\dot{H}^{s-1/2}_{\nu}}+\|\omega_{2}\|_{\dot{H}^{s-1/2}_{\nu}})\|f\|_{\dot{\mathcal{F}}^{1,1}_{\nu}}^{2n}.
	\end{multline*}
	Hence, we now have to prove estimates on $\|\omega_{i}\|_{\dot{H}^{s-1/2}_{\nu}}$ for $1/2\leq s \leq 3/2$. This follows similar patterns:
	\begin{equation*}
	\|\partial_{\alpha_{1}}\Omega\|_{\dot{H}^{s-1/2}_{\nu}} \leq  2A_{\rho}\|f\|_{\dot{H}^{s+1/2}_{\nu}} + 2A_{\mu}\|BR_{1}\|_{\dot{H}^{s-1/2}_{\nu}} + 2A_{\mu}\|BR_{3}\partial_{\alpha_{1}}f\|_{\dot{H}^{s-1/2}_{\nu}}.
	\end{equation*}
	Notice that using the triangle inequality as above on $|\xi|^{s-1/2}$, since $0\leq s-1/2\leq 1$, we obtain analogously to the steps in Section \ref{secw} that
	\begin{equation*}
	\|\partial_{\alpha_{i}}\Omega\|_{\dot{H}^{s-1/2}_{\nu}} \leq 2A_\rho C_{4,\nu}\|f\|_{\dot{H}^{s+1/2}_{\nu}}.
	\end{equation*}
	Moreover, for $i=1,2$
	\begin{equation*}
	\|\omega_{i}\|_{\dot{H}^{s-1/2}_{\nu}} \leq 2A_\rho C_{4,\nu}\|f\|_{\dot{H}^{s+1/2}_{\nu}},
	\end{equation*}
	and
	\begin{equation*}
	\|\omega_{3}\|_{\dot{H}^{s-1/2}_{\nu}} \leq 4A_\mu A_\rho \|f\|_{\dot{\mathcal{F}}^{1,1}}^2(C_{5,\nu}  + 3C_{2,\nu}) \|f\|_{\dot{H}^{s+1/2}_{\nu}}.
	\end{equation*}
	Now we can follow the steps in Proposition \ref{normdecreaseprop}. Plugging in the estimates above and performing similar estimates for $K_{2}$ and $K_{3}$, we obtain for $1/2\leq s \leq 3/2$
	\begin{equation}
	\frac{d}{dt} \|f\|_{\dot{H}^{s}_{\nu}} \leq -C \|f\|_{\dot{H}^{s+1/2}_{\nu}},
	\end{equation}
	for a positive constant $C$ depending on $f_{0}$ and $\nu$.
\end{proof}

\section{Large-Time Decay of Analytic Norms}\label{DecaySection}
In this section, we begin by proving the Decay Lemma we will use to show large time decay of solutions to the Muskat problem:
\begin{lemma}[Decay Lemma]\label{decaylemma}
	Suppose $\|g\|_{\dot{\mathcal{F}}^{s_{1},p}_{\nu}}^{p} (t) \leq C_{0}$ and
	\bea\label{decreaseineq}
	\frac{d}{d{t}}\|g\|_{\dot{\mathcal{F}}^{s_{2},p}_{\nu}}^{p}(t) \leq -C\|g\|_{\dot{\mathcal{F}}^{s_{2}+1/p,p}_{\nu}}^{p} (t)
	\eea
	such that $s_{1} \leq s_{2}$ and $p\in [1,\infty)$.
	Then
	$$\|g\|_{\dot{\mathcal{F}}^{s_{2},p}_{\nu}}^{p}(t) \lesssim (1+t)^{(s_{1}-s_{2})p} .$$
\end{lemma}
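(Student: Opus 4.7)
The plan is to run a Nash-type argument: a frequency splitting produces an interpolation inequality relating the three norms appearing in the statement, and plugging this into the dissipation \eqref{decreaseineq} yields a scalar ODE whose explicit solution gives the polynomial decay. Set $q:=(s_{2}-s_{1})p$. When $q=0$ the result is immediate from \eqref{decreaseineq}, so I assume $q>0$ throughout.

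For any $K>0$ I split the defining integral into $\{|\xi|\le K\}$ and $\{|\xi|>K\}$. On low frequencies $|\xi|^{s_{2}p}\le K^{q}|\xi|^{s_{1}p}$ (using $s_{2}\ge s_{1}$), which combined with $\|g\|_{\dot{\mathcal{F}}^{s_{1},p}_{\nu}}^{p}\le C_{0}$ bounds that piece by $K^{q}C_{0}$. On high frequencies $|\xi|^{s_{2}p}\le K^{-1}|\xi|^{s_{2}p+1}$, producing the bound $K^{-1}\|g\|_{\dot{\mathcal{F}}^{s_{2}+1/p,p}_{\nu}}^{p}$. Minimizing the sum over $K>0$ (the optimum is attained at $K=(\|g\|_{\dot{\mathcal{F}}^{s_{2}+1/p,p}_{\nu}}^{p}/(qC_{0}))^{1/(q+1)}$) yields the Nash-type interpolation
\begin{equation*}
\|g\|_{\dot{\mathcal{F}}^{s_{2},p}_{\nu}}^{p}(t)^{1+1/q}\le A(q)\,C_{0}^{1/q}\,\|g\|_{\dot{\mathcal{F}}^{s_{2}+1/p,p}_{\nu}}^{p}(t),
\end{equation*}
for an explicit constant $A(q)$.

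Writing $y(t):=\|g\|_{\dot{\mathcal{F}}^{s_{2},p}_{\nu}}^{p}(t)$ and inserting the interpolation into \eqref{decreaseineq} gives the ordinary differential inequality $y'(t)\le -c\,y(t)^{1+1/q}$ with $c=c(C,C_{0},q)>0$. Setting $w:=y^{-1/q}$ converts this into the linear inequality $w'(t)\ge c/q$, which integrates to $w(t)\ge w(0)+(c/q)t$, i.e.
\begin{equation*}
y(t)\le\bigl(y(0)^{-1/q}+(c/q)t\bigr)^{-q}.
\end{equation*}
If $y(0)<\infty$, then $y(0)^{-1/q}+(c/q)t\gtrsim 1+t$ and the desired bound $y(t)\lesssim(1+t)^{-q}=(1+t)^{(s_{1}-s_{2})p}$ follows.

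The only genuine obstacle is ensuring $y(0)$ is finite, as the hypothesis only controls the lower-regularity norm. In every application of this lemma in the paper (see Theorem \ref{AnalyticTheorem} and Proposition \ref{Hsdecreaseprop}) a uniform upper bound on $y(0)$ is already available; absent that, one may restart the argument from an arbitrary $t_{0}>0$ at which $y(t_{0})<\infty$ (by the dissipation \eqref{decreaseineq}, $y$ is nonincreasing, so finiteness propagates once it holds), absorbing the short-time regime into the implicit constant in $\lesssim$.
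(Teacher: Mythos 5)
Your proof is correct, and it takes a genuinely different route from the paper's. The paper uses a time--dependent frequency cutoff $|\xi| \lessgtr (1+\delta t)^{-1}$ directly inside the differential inequality, which produces a linear ODE of the form $y' + C(1+\delta t)^{-1}y \leq CC_0(1+\delta t)^{-q-1}$; this is then solved with the integrating factor $(1+\delta t)^{\sigma}$ for a suitably chosen $\sigma > q$. You instead optimize a time--independent cutoff $K$ to extract a Nash--type interpolation inequality $y^{1+1/q} \lesssim C_0^{1/q}\,z$, which reduces \eqref{decreaseineq} to the autonomous Bernoulli inequality $y' \leq -cy^{1+1/q}$ with explicit solution $y(t) \leq (y(0)^{-1/q}+(c/q)t)^{-q}$. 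Your version has the advantage of being self-contained and producing the decay constant explicitly in terms of $y(0)$, $C$, $C_0$, $q$; the paper's version avoids the optimization step and fits more naturally into a Gronwall/integrating-factor template. Both approaches need $y$ finite at the starting time --- you flag this correctly and your fix (restart at any $t_0$ where $y(t_0)<\infty$, using that $y$ is nonincreasing) matches how the lemma is actually invoked in the paper (e.g.\ starting from $T_s$ in Theorem \ref{fs1decay}). One small remark: the common analytic weight $e^{p\nu t|\xi|}$ appears in all three norms with the same exponent and is therefore harmless in your frequency splitting, but it is worth saying explicitly since the notation might otherwise suggest the norms are weighted differently.
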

\begin{proof}
	Consider $r > 0$. Then
	\begin{align*}
	\|g\|_{\dot{\mathcal{F}}^{r,p}_{\nu}}^{p} &= \int e^{\nu t p|\xi|} |\xi|^{rp}|\hat{g}(\xi)|^{p} d\xi \\
	&\geq \int_{|\xi|> (1+\delta t)^{s}} e^{\nu t p|\xi|} |\xi|^{rp}|\hat{g}(\xi)|^{p} d\xi \\
	&\geq (1+\delta t)^{s} \int_{|\xi|> (1+\delta t)^{s}} e^{\nu t p|\xi|} |\xi|^{(r-1/p)p}|\hat{g}(\xi)|^{p} d\xi\\
	&= (1+\delta t)^{s}\Big(\|g\|_{\dot{\mathcal{F}}^{r-1/p,p}_{\nu}}^p-\int_{|\xi| \leq (1+\delta t)^{s}} e^{\nu t p|\xi|} |\xi|^{(r-1/p)p}|\hat{g}(\xi)|^{p} d\xi\Big).
	\end{align*}
	We can use \eqref{decreaseineq} and the above argument with $r = s_{2}+1/p$ to obtain that
	\begin{multline*}
	\frac{d}{d{t}}\|g\|_{\dot{\mathcal{F}}^{s_{2},p}_{\nu}}^{p} + C(1+\delta t)^{s}\|g\|_{\dot{\mathcal{F}}^{s_{2},p}_{\nu}}^{p}
	\leq -C\|g\|_{\dot{\mathcal{F}}^{s_{2}+1/p,p}_{\nu}}^{p} + C(1+\delta t)^{s}\|g\|_{\dot{\mathcal{F}}^{s_{2},p}_{\nu}}^{p}\\
	\leq C(1+\delta t)^{s}\Big(\int_{|\xi| \leq (1+\delta t)^{s}} e^{\nu t p|\xi|} |\xi|^{s_{2}p}|\hat{g}(\xi)|^{p} d\xi\Big)\\
	\leq C(1+\delta t)^{s (s_{2}-s_{1}) p}(1+\delta t)^{s}\Big(\int_{|\xi| \leq (1+\delta t)^{s}} e^{\nu t p|\xi|} |\xi|^{s_{1}p}|\hat{g}(\xi)|^{p} d\xi\Big)\\
	\leq C(1+\delta t)^{s (s_{2}-s_{1}) p}(1+\delta t)^{s}\|g\|_{\dot{\mathcal{F}}^{s_{1},p}_{\nu}}^{p}\\
	\leq CC_{0}(1+\delta t)^{s (s_{2}-s_{1}) p}(1+\delta t)^{s}.
	\end{multline*}
	Now, let $\sigma > (s_{2}-s_{1})p$ and choose $\delta$ such that $\delta\sigma=C$, $s=-1$. Then
	\begin{align*}
	\frac{d}{d{t}}((1+\delta t)^{\sigma}\|g\|_{\dot{\mathcal{F}}^{s_{2},p}_{\nu}}^{p}) &= (1+\delta t)^{\sigma} \frac{d}{dt}\|g\|_{\dot{\mathcal{F}}^{s_{2},p}_{\nu}}^{p}+ \sigma\delta \|g\|_{\dot{\mathcal{F}}^{s_{2},p}_{\nu}}^{p}(1+\delta t)^{\sigma-1}\\
	&=  (1+\delta t)^{\sigma}\frac{d}{dt}\|g\|_{\dot{\mathcal{F}}^{s_{2},p}_{\nu}}^{p} + C \|g\|_{\dot{\mathcal{F}}^{s_{2},p}_{\nu}}(1+\delta t)^{\sigma-1}\\
	&=  (1+\delta t)^{\sigma}(\frac{d}{dt}\|g\|_{\dot{\mathcal{F}}^{s_{2},p}_{\nu}}^{p} + C \|g\|_{\dot{\mathcal{F}}^{s_{2},p}_{\nu}}^{p}(1+\delta t)^{-1})\\
	&\leq CC_{0}(1+\delta t)^{\sigma -(s_{2}-s_{1})p-1}.
	\end{align*}
	Integrating in time we obtain that
	$$ (1+\delta t)^{\sigma}\|g\|_{\dot{\mathcal{F}}^{s_{2},p}_{\nu}}^{p} \leq \frac{\tilde{C}}{\delta t} (1+ (1+\delta)^{\sigma-(s_2-s_1)p})$$
	for some constant $\tilde{C}$. Dividing both sides of the inequality by $(1+\delta t)^{\sigma}$ we obtain our results.
\end{proof}
We can now use this lemma to prove large-time decay rates for the analytic norms. By Holder's inequality, for $s> -d/2$ and $r > s+ d/2$
\begin{align*}
\|f\|_{\dot{\mathcal{F}}^{s,1}_{\nu}} &\leq \|f\|_{H^{r}_{\nu}}\Big\|\frac{|\xi|^{s}}{(1+|\xi|^{2})^{r/2}}\Big\|_{L^{1}}\\
&\lesssim \|f\|_{H^{r}_{\nu}}
\end{align*}
Hence, by the estimate \eqref{Hsdecrease}, we obtain for $-1<s'<0$:
\bea\label{fs1negative}
\|f\|_{\dot{\mathcal{F}}^{s',1}_{\nu}}(t) \leq C_{s}
\eea
for a fixed constant $C_{s}$.
By the Decay Lemma, this implies that
\bea\label{initialdecay}
\|f\|_{\dot{\mathcal{F}}^{s,1}_{\nu}} \lesssim (1+t)^{-1-s+\lambda}
\eea
for $0\leq s \leq 1$ and arbitrarily small $\lambda > 0$. This proves Theorem \ref{DecayTheorem}.
We can further demonstrate decay of other analytic norms. For example, the quantities $\|f\|_{\dot{\mathcal{F}}^{s,1}_{\nu}}$ for $s > 1$ also decay in time. First, note that
$$\|f\|_{\dot{\mathcal{F}}^{s,1}}(t) \leq \|e^{-\nu t |\xi|}|\xi|^{s-1}\|_{L^{\infty}_{\xi}}\|f\|_{\mathcal{F}^{0,1}_{\nu}} \leq \|e^{-\nu t |\xi|}|\xi|^{s-1}\|_{L^{\infty}_{\xi}}k(A_{\mu}) < \infty$$
for any $t > 0$. Moreover, using the weighted triangle inequality
$$|\xi|^{s} \leq (n+1)^{s}(|\xi-\xi_{1}|^{s} + |\xi_{1}-\xi_{2}|^{s}+ \cdots + |\xi_{n}|^{s}),$$ it can be seen that
$$\frac{d}{dt}\|f\|_{\dot{\mathcal{F}}^{s,1}_{\nu}}(t) \leq -C(s)\|f\|_{\dot{\mathcal{F}}^{s+1,1}_{\nu}}(t)$$
for a positive constant $C(s)$ depending on $s$ and $\|f\|_{\dot{\mathcal{F}}^{1,1}_{\nu}}$ when $\|f\|_{\dot{\mathcal{F}}^{1,1}_{\nu}}$ is sufficiently small. However, by \eqref{initialdecay} for $s=1$, the quantity $\|f\|_{\dot{\mathcal{F}}^{1,1}_{\nu}}(t)$ does indeed decay to a sufficiently small quantity when $t > T_{s}$ for some $T_{s} > 0$ depending on $s$ and the initial data $f_{0}$. Hence, since, as noted earlier, $\|f\|_{\dot{\mathcal{F}}^{s,1}_{\nu}}(T_{s}) < \infty$, we can apply the Decay Lemma to obtain
\begin{thm}\label{fs1decay}
	Let $s\geq 1$. Then, $\|f\|_{\dot{\mathcal{F}}^{s,1}}(t) < \infty$ for all $t > 0$ and we have the decay
	$$\|f\|_{\dot{\mathcal{F}}^{s,1}_{\nu}} \leq C_{1,s}t^{-s-1}$$
	for $t > T_{s}$ and $C_{1,s}$ depending on $s$ and the initial data $f_{0}$.
\end{thm}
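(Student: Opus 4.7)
The plan is to combine three ingredients already in hand: the uniform $\mathcal{F}^{0,1}_{\nu}$ bound from Theorem \ref{AnalyticTheorem}, a norm-decrease inequality at level $s$, and a single application of the Decay Lemma \ref{decaylemma} once $\|f\|_{\dot{\mathcal{F}}^{1,1}_{\nu}}$ has shrunk below a threshold depending on $s$. For the finiteness claim, the elementary estimate already recorded just above the theorem,
$$
\|f\|_{\dot{\mathcal{F}}^{s,1}}(t) \leq \bigl\|e^{-\nu t|\xi|}|\xi|^{s-1}\bigr\|_{L^{\infty}_{\xi}}\,\|f\|_{\mathcal{F}^{0,1}_{\nu}}(t) \leq C(s,\nu)\,t^{-(s-1)}\,k(|A_\mu|),
$$
together with \eqref{instantanalyticity}, immediately yields $\|f\|_{\dot{\mathcal{F}}^{s,1}}(t)<\infty$ for every $t>0$.

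Next I would redo the computation of Proposition \ref{normdecreaseprop} with $\|f\|_{\dot{\mathcal{F}}^{s,1}_{\nu}}$ replacing $\|f\|_{\dot{\mathcal{F}}^{1,1}_{\nu}}$, using the weighted triangle inequality
$$
|\xi|^{s}\leq (n+1)^{s}\bigl(|\xi-\xi_{1}|^{s}+|\xi_{1}-\xi_{2}|^{s}+\cdots+|\xi_{n}|^{s}\bigr),\qquad s\geq 1,
$$
to distribute the factor $|\xi|^{s}$ among the convolution chains appearing in the Fourier bounds \eqref{N2est}--\eqref{N3est} for $N(f)$. Together with the higher-order analogues of the vorticity estimates indicated in Remark \ref{vorticityremark}, this produces
$$
\frac{d}{dt}\|f\|_{\dot{\mathcal{F}}^{s,1}_{\nu}}(t) \leq \Bigl(\nu - A_{\rho} + P_{s}\bigl(\|f\|_{\dot{\mathcal{F}}^{1,1}_{\nu}}(t)\bigr)\Bigr)\|f\|_{\dot{\mathcal{F}}^{s+1,1}_{\nu}}(t),
$$
where $P_{s}$ is a rational function with $P_{s}(0)=0$. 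Hence there exists $\delta_{s}>0$ such that whenever $\|f\|_{\dot{\mathcal{F}}^{1,1}_{\nu}}(t)\leq \delta_{s}$ the right-hand side is bounded by $-C(s)\|f\|_{\dot{\mathcal{F}}^{s+1,1}_{\nu}}(t)$ with $C(s)>0$.

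Finally, since \eqref{initialdecay} gives $\|f\|_{\dot{\mathcal{F}}^{1,1}_{\nu}}(t)\lesssim(1+t)^{-2+\lambda}$, there is $T_{s}>0$ such that $\|f\|_{\dot{\mathcal{F}}^{1,1}_{\nu}}(t)\leq\delta_{s}$ for all $t\geq T_{s}$; for such $t$ the norm-decrease inequality above is valid, and by the first step $\|f\|_{\dot{\mathcal{F}}^{s,1}_{\nu}}(T_{s})<\infty$. Translating time by $T_{s}$ and invoking Lemma \ref{decaylemma} with $p=1$, $s_{2}=s$, and $s_{1}$ taken close to $-1$ (where the uniform bound $\|f\|_{\dot{\mathcal{F}}^{s_{1},1}_{\nu}}(t)\lesssim 1$ follows from \eqref{fs1negative} propagated in time and the first step) delivers the decay $\|f\|_{\dot{\mathcal{F}}^{s,1}_{\nu}}(t)\leq C_{1,s}\,t^{-s-1}$ for $t>T_{s}$.

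The main technical point I expect is the combinatorics in the norm-decrease step: carrying $|\xi|^{s}$ through the multi-convolution expansion of $N(f)$ introduces prefactors of size $(n+1)^{s}$ which must be summed against the geometric factors $\|f\|_{\dot{\mathcal{F}}^{1,1}_{\nu}}^{2n}$, and similarly a weighted version of the vorticity estimates at regularity $s-1/2$ must be obtained; this is the reason the threshold $\delta_{s}$ shrinks with $s$ and why the waiting time $T_{s}$ is required before the Decay Lemma can be triggered.
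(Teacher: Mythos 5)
Your proposal reproduces the paper's argument essentially verbatim: the same $L^\infty$ multiplier bound $\|e^{-\nu t|\xi|}|\xi|^{s-1}\|_{L^\infty}$ against the uniformly bounded analytic norm for finiteness, the same weighted triangle inequality $|\xi|^s\le (n+1)^s(\cdots)$ to obtain the level-$s$ norm-decrease once $\|f\|_{\dot{\mathcal{F}}^{1,1}_\nu}$ has decayed below an $s$-dependent threshold via \eqref{initialdecay}, and then the Decay Lemma with $s_1$ near $-1$. The only additions are slightly more explicit bookkeeping (naming the rational function $P_s$ with $P_s(0)=0$ and flagging the $(n+1)^s$ summability issue), which match the paper's implicit reasoning, so there is nothing further to reconcile.
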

We can similarly see by the uniform bounds of Theorem \ref{L2}, that $\|f\|_{\dot{H}^{s}}(t) < \infty$ for all $s\geq0$ and $t>0$. Hence, by similar arguments to above and Proposition \ref{Hsdecreaseprop}, the Decay Lemma \ref{decaylemma} with $p=2$,
\begin{thm}\label{L2decay}
	Suppose $s\geq 1/2$. Then $\|f\|_{\dot{H}^{s}} < \infty$ for all $t> 0$ and we have the decay
	$$\|f\|_{\dot{H}^{s}_{\nu}}^{2} \leq C_{2,s}t^{-2s}$$
	for $t > T_{s}$ and $C_{2,s}$ depending on $s$ and the initial data $f_{0}$.
\end{thm}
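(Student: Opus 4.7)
The argument will parallel that of Theorem \ref{fs1decay}. The plan is to first establish pointwise-in-time finiteness of $\|f\|_{\dot{H}^{s}}(t)$ for any $t>0$, then secure the differential inequality $\frac{d}{dt}\|f\|_{\dot{H}^{s}_{\nu}}^{2}\le -C(s)\|f\|_{\dot{H}^{s+1/2}_{\nu}}^{2}$ for all $s\ge 1/2$, and finally invoke the Decay Lemma \ref{decaylemma} together with the uniform $L^{2}_{\nu}$ bound from Theorem \ref{L2}.

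For the finiteness, writing $|\xi|^{2s}=|\xi|^{2s}e^{-2\nu t|\xi|}\cdot e^{2\nu t|\xi|}$ and using $\bigl\|\,|\xi|^{2s}e^{-2\nu t|\xi|}\,\bigr\|_{L^{\infty}_{\xi}}\le C(s,\nu)\,t^{-2s}$, one obtains
$$\|f\|_{\dot{H}^{s}}^{2}(t)\le C(s,\nu)\,t^{-2s}\,\|f\|_{L^{2}_{\nu}}^{2}(t),$$
which is finite for every $t>0$ by Theorem \ref{L2}. Reducing the weight parameter slightly (so that Theorem \ref{L2} still applies with a marginally larger parameter) yields the same finiteness for $\|f\|_{\dot{H}^{s}_{\nu}}(t)$, in particular at the starting time $t=T_{s}$ introduced below.

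For the differential inequality, Proposition \ref{Hsdecreaseprop} provides it directly when $s\in[1/2,3/2]$. For $s>3/2$, I would revisit the nonlinear estimates in the proof of Proposition \ref{Hsdecreaseprop} and replace the product-rule step on $|\xi|^{s-1/2}$ by the weighted triangle inequality
$$|\xi|^{s-1/2}\le (n+1)^{s-1/2}\bigl(|\xi-\xi_{1}|^{s-1/2}+|\xi_{1}-\xi_{2}|^{s-1/2}+\cdots+|\xi_{n}|^{s-1/2}\bigr),$$
distributing the weight across the convolution factors that arise in the Birkhoff-Rott and double-layer bounds of Section \ref{secw} in the style of Remark \ref{vorticityremark}. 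Placing the weight on a single Fourier factor always produces either $\|f\|_{\dot{H}^{s+1/2}_{\nu}}$ or $\|\Omega\|_{\dot{H}^{s+1/2}_{\nu}}$ (the latter absorbed into $\|f\|_{\dot{H}^{s+1/2}_{\nu}}$ as in the proof of Proposition \ref{f11vorticity}), while the remaining factors contribute powers of $\|f\|_{\dot{\mathcal{F}}^{1,1}_{\nu}}$. By Theorem \ref{DecayTheorem}, the quantity $\|f\|_{\dot{\mathcal{F}}^{1,1}_{\nu}}(t)$ decays to zero, so there exists $T_{s}=T_{s}(f_{0},s)>0$ past which the nonlinear contributions are dominated by the dissipative term $-A_{\rho}\|f\|_{\dot{H}^{s+1/2}_{\nu}}^{2}$, yielding the desired differential inequality on $[T_{s},\infty)$.

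With the initial bound at $t=T_{s}$ and the differential inequality in hand, apply Lemma \ref{decaylemma} with $p=2$, $s_{1}=0$, $s_{2}=s$, using the uniform bound $\|f\|_{L^{2}_{\nu}}^{2}(t)\le C_{0}$ from Theorem \ref{L2} on $[T_{s},\infty)$ (time-translated so the lemma applies from $T_{s}$). This yields
$$\|f\|_{\dot{H}^{s}_{\nu}}^{2}(t)\lesssim (1+t)^{-2s}\le C_{2,s}\,t^{-2s},\qquad t>T_{s}.$$
The main technical obstacle is the extension of Proposition \ref{Hsdecreaseprop} to arbitrary $s>3/2$: the vorticity estimates in $\dot{H}^{s-1/2}_{\nu}$ must be re-derived along the lines of Remark \ref{vorticityremark}, and one must verify that the weighted triangle inequality delivers $|\xi|^{s-1/2}$ onto a single high-frequency factor without producing uncontrolled combinatorial constants in the series. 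This is mechanical but combinatorially involved, exactly mirroring the extension already carried out in Theorem \ref{fs1decay}.
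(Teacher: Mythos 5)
Your proposal is correct and mirrors the paper's argument: establish finiteness via the uniform $L^{2}_{\nu}$ bound of Theorem~\ref{L2}, extend Proposition~\ref{Hsdecreaseprop} to $s>3/2$ with the weighted triangle inequality exactly as the paper does for $\dot{\mathcal{F}}^{s,1}_{\nu}$ in the lead-up to Theorem~\ref{fs1decay}, and then apply the Decay Lemma with $p=2$, $s_{1}=0$, $s_{2}=s$. One small observation worth recording: your first display already delivers the theorem outright, since applying the same splitting with a marginally larger weight $\nu'>\nu$ (still admissible because condition~\eqref{condition} is an open condition) gives $\|f\|_{\dot{H}^{s}_{\nu}}^{2}(t)\le C(s,\nu,\nu')\,t^{-2s}\|f\|_{L^{2}_{\nu'}}^{2}(t)\lesssim t^{-2s}$ for every $t>0$ and every $s\ge 0$ directly from Theorem~\ref{L2}, rendering the Decay Lemma and the extension of Proposition~\ref{Hsdecreaseprop} beyond $s=3/2$ unnecessary for this particular statement.
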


\section{Ill-posedness}\label{illposedsection}

In this section we show that the Muskat problem in the unstable case $\rho_{1} > \rho_{2}$ is ill-posed for any Sobolev space $H^s$ with $s>0$.
First we notice that our initial data $f\in L^2\cap \dot{\mathcal{F}}^{1,1}$ need not be in $H^s$.
\begin{lemma} There exists a function $f\in L^2\cap \dot{\mathcal{F}}^{1,1}$ with $$\|f_{0}\|_{L^{2}} < \infty,  \quad \|f_{0}\|_{\dot{\mathcal{F}}^{1,1}} < k_{\mu}$$ 
for a constant $k_{\mu}$ of medium size such that $f\notin H^s$ for any $s>0$.
\end{lemma}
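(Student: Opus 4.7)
The plan is to construct $f$ explicitly by specifying its Fourier transform as a sum of disjoint narrow bumps sitting at dyadic frequencies. The point is that the norms $L^2$ and $\dot{\mathcal F}^{1,1}$ only see, respectively, an $L^2$ and an $L^1$ integrability of $\hat f$ (weighted by $|\xi|$ in the second case), while $H^s$ requires $L^2$ integrability with the heavier weight $|\xi|^s$. Making the bumps very narrow and very tall decouples these requirements at arbitrarily high frequency, since a thin bump has small $L^2$ and $L^1$ mass regardless of its height.

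Concretely, working in dimension $d\in\{1,2\}$ (which covers both the 2D and 3D Muskat problems in the paper), I would place $\hat f$ as follows. Fix a direction $e_1\in\mathbb{R}^d$ and let
\begin{equation*}
R_n := 2^n,\qquad r_n := 2^{-\alpha n},\qquad a_n := \frac{1}{n^{2}\,r_n^{d/2}} = \frac{2^{d\alpha n/2}}{n^{2}},
\end{equation*}
with any fixed $\alpha>2/d$, and set $B_n := B(R_n e_1,\,r_n)$; by the choice of $r_n$ these balls are pairwise disjoint and stay away from the origin. Define
\begin{equation*}
\hat f(\xi) \;:=\; \varepsilon\sum_{n=1}^{\infty} a_n \,\mathbf{1}_{B_n}(\xi),
\end{equation*}
where $\varepsilon>0$ is a small scaling parameter chosen at the end. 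Since $\hat f\in L^1\cap L^2$ (by the computations below), $f$ is a bona fide continuous $L^2$ function.

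The key verifications are straightforward arithmetic. Using $|B_n|\sim r_n^d$ and $|\xi|\sim R_n$ on $B_n$:
\begin{align*}
\|f\|_{L^2}^{2} &\sim \varepsilon^{2}\sum_n a_n^{2} r_n^{d} \;=\; \varepsilon^{2}\sum_n \tfrac{1}{n^{4}} \;<\;\infty,\\
\|f\|_{\dot{\mathcal F}^{1,1}} &\sim \varepsilon\sum_n a_n R_n r_n^{d} \;=\; \varepsilon\sum_n \tfrac{2^{(1-d\alpha/2)n}}{n^{2}} \;<\;\infty \quad(\text{since } \alpha>2/d),\\
\|f\|_{H^s}^{2} &\gtrsim \varepsilon^{2}\sum_n a_n^{2} R_n^{2s} r_n^{d} \;=\; \varepsilon^{2}\sum_n \tfrac{2^{2sn}}{n^{4}} \;=\;+\infty
\end{align*}
for every $s>0$. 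Finally, choosing $\varepsilon$ sufficiently small makes $\|f\|_{\dot{\mathcal F}^{1,1}}<k_\mu$ without spoiling either $\|f\|_{L^2}<\infty$ or the divergence of $\|f\|_{H^s}$.

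There is essentially no hard step here: the whole construction relies on the elementary fact that $L^2$ and $\dot{\mathcal F}^{1,1}$ impose only weak high-frequency control, and that one can pump an arbitrary amount of $H^s$ mass into narrow bumps whose $L^1$- and $L^2$-masses are controlled. The only minor bookkeeping item is keeping the balls disjoint and away from the origin, which is automatic from the choices above; the arithmetic verification of the three norm estimates is routine.
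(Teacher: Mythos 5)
Your construction is correct, and it follows the same Fourier-side strategy as the paper: place $\hat f$ on a sparse family of thin bumps, so that the $L^1$ and $L^2$ masses (with a single power of $|\xi|$) stay summable while the $|\xi|^{s}$-weighted $L^2$ mass blows up. The difference is in the choice of scales. The paper uses polynomial scales (bumps at radius $n^{\delta}$, width $n^{-\gamma}$, height tied to $n^{\sigma}$) and then selects $\sigma,\delta,\gamma$ to put $\dot H^{s}$ exactly on the divergent side; but with purely polynomial scales the $L^2$-finiteness condition $2\sigma-\delta-\gamma<-1$ and the divergence condition $2\sigma+\delta(2s-1)-\gamma\geq-1$ force $s\geq \varepsilon/(2\delta)$ for the slack $\varepsilon>0$, so one obtains a function outside $\dot H^{s}$ only for $s$ above a threshold depending on the parameters. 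This is enough for Theorem~\ref{IllPTheorem}, since there one is free to choose a different $f_{0}$ for each fixed $s>0$, but it does not literally produce a single $f$ outside \emph{every} $H^{s}$, $s>0$. Your dyadic choice $R_{n}=2^{n}$, $r_{n}=2^{-\alpha n}$, $a_{n}=2^{d\alpha n/2}/n^{2}$ avoids this tension entirely: the $L^2$ and $\dot{\mathcal F}^{1,1}$ sums are geometric-times-summable, while $\|f\|_{\dot H^{s}}^{2}\sim\sum 2^{2sn}/n^{4}$ diverges for every $s>0$ simultaneously. So your version is not only a clean alternative but actually proves the lemma in the strong form stated (a single $f$ outside all $H^{s}$), which the paper's polynomial-scale example does not quite achieve; the final rescaling by a small $\varepsilon$ to get $\|f\|_{\dot{\mathcal F}^{1,1}}<k_{\mu}$ is the same in both. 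One very minor bookkeeping point: you should also note, as the paper implicitly does, that the divergence of the homogeneous $\dot H^{s}$ integral gives divergence of the inhomogeneous $H^{s}$ norm because the bumps sit at $|\xi|\geq 1$; this is immediate from your choice of centers.
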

\begin{proof} We give an explicit counterexample. Consider a radial function $f:\mathbb{R}^{2}\rightarrow \mathbb{R}$. Let for $n\geq N$ for some $N > 0$ integer
\[   
|\xi \hat{f}(\xi)| = r |\hat{f}(r)|= 
\begin{cases}
n^{\sigma} &\quad\text{if   } r\in [n^{\delta}, n^{\delta}+1/n^{\gamma}] \\
0 &\quad\text{otherwise},\\
\end{cases}
\]
where $\sigma$, $\delta$ and $\gamma$ are positive. Then one can compute
\begin{align*}
\|f\|_{\dot{\mathcal{F}}^{1,1}} &= 2\pi \int_{0}^{\infty} r^{2}|\hat{f}(r)|dr\leq 2\pi \sum_{n\geq N} n^{\sigma + \delta -\gamma} + n^{\sigma -2\gamma} < \infty
\end{align*}
when $\sigma + \delta -\gamma < -1$.
For $0\leq s < 1/2$ we have that 
$$\|f\|_{\dot{H}^{s}}^{2} = \int_{0}^{\infty} (r|\hat{f}(r)|)^{2}r^{2s-1}dr$$ and having chosen $N >0$ appropriately large,
\begin{align*}
2^{2s-1}2\pi\sum_{n\geq N} n^{2\sigma+\delta(2s-1)-\gamma} &\leq 2\pi\sum_{n\geq N} n^{2\sigma-\gamma}(n^{\delta}+n^{-\gamma})^{2s-1}
\\&\leq 2\pi \int_{0}^{\infty} (r|\hat{f}(r)|)^{2}r^{2s-1}dr \\
&\leq 2\pi \sum_{n\geq N} n^{2\sigma + \delta(2s-1) -\gamma}.
\end{align*}
Hence, pick $\sigma$, $\delta$ and $\gamma$ such that 
$2\sigma + \delta(2s-1) -\gamma = -1$. Then, $\|f\|_{L^{2}} < +\infty$ and for $s > 0$, $\|f\|_{\dot{H}^{s}} = +\infty$. This counterexample gives the proof in the 3D case, as we can force $\|f\|_{\dot{\mathcal{F}}^{1,1}} < k_{\mu}$ by multiplying this counterexample by the appropriate constant.

In the 1D interface case, let for $n\geq N$ for some $N > 0$ integer
\[   
\xi \hat{f}(\xi) = 
\begin{cases}
n^{\sigma} &\quad\text{if   } \xi\in [n^{\delta}, n^{\delta}+1/n^{\gamma}] \\
0 &\quad\text{otherwise}\\
\end{cases}
\]
such that $\gamma > \sigma +1$, $2\delta + \gamma > 2\sigma + 1$ but $2\delta(1-s) + \gamma = 2\sigma + 1$. Then one can compute that $$\|f\|_{L^{2}} < \infty,  \|f\|_{\dot{\mathcal{F}}^{1,1}} < k_{\mu} \hspace{0.2cm}\textup{and}\hspace{0.2cm}\|f\|_{H^s}=+\infty.$$
\end{proof}
\begin{remark}
	This example can be adapted to show that even if $f\in\dot{\mathcal{F}}^{1,1}_{\nu}\cap L^{2}$, it need not be in $H^{s}$.
\end{remark}

\begin{thm}
	For every $\epsilon > 0$, there exists a solution $\tilde{f}$ to the unstable regime and $0< \delta < \epsilon$ such that $\|\tilde{f}\|_{H^{s}}(0) = \delta$ but $\|\tilde{f}\|_{H^{s}}(\delta) = \infty$.
\end{thm}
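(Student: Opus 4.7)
The plan rests on a time-reversal symmetry of the Muskat equation combined with the instant gain of regularity proved in Theorem \ref{L2}, applied to the $L^{2}\cap\dot{\mathcal{F}}^{1,1}$ counterexample $f_0$ produced in the preceding lemma (which satisfies $\|f_0\|_{\dot{\mathcal{F}}^{1,1}}<k(|A_\mu|)$ yet $\|f_0\|_{H^s}=\infty$). The symmetry is the following: since \eqref{Omegagraph} is linear in $\Omega$ and the operator $\mathcal{D}$ does not depend on $A_\rho$, one has $\Omega=A_\rho\,\widetilde{\Omega}(f)$, so each of $N_1,N_2,N_3$ in \eqref{N1N2N3} is linear in $A_\rho$. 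The contour equation therefore takes the form $\partial_t f=A_\rho(-\Lambda f+\widetilde{N}(f))$ with $\widetilde{N}$ independent of $A_\rho$; hence if $\phi(t)$ solves the stable problem on $[0,T]$ with Atwood number $A_\rho>0$, then $\tilde f(t):=\phi(T-t)$ solves the unstable problem on $[0,T]$ with Atwood number $-A_\rho<0$.

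Given $\epsilon>0$ and $c\in(0,1]$, let $\phi^{(c)}$ be the global stable solution furnished by Theorem \ref{Global} with initial datum $cf_0$. Theorem \ref{L2} gives $\|\phi^{(c)}(t)\|_{L^2_\nu}\le c\|f_0\|_{L^2}e^{R_0}$ uniformly in $t$ for a constant $R_0$ independent of $c\in(0,1]$, whence
\begin{equation*}
\|\phi^{(c)}(T)\|_{H^s}^{2} \le \Bigl(\sup_{\xi}(1+|\xi|^{2})^{s}e^{-2\nu T|\xi|}\Bigr)\|\phi^{(c)}(T)\|_{L^2_\nu}^{2} \le C_{s,\nu}\,T^{-2s}\,c^{2}\,\|f_0\|_{L^2}^{2}\,e^{2R_0}.
\end{equation*}
In particular $\phi^{(c)}(T)\in H^s$ for every $T>0$. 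Set $G(T):=\|\phi^{(c)}(T)\|_{H^s}$. The bound gives $G(T)\le C_0 c T^{-s}$, and $G(T)\to 0$ as $T\to\infty$ by the decay estimates of Section \ref{DecaySection}. Conversely the strong $L^2$ continuity of the flow combined with $cf_0\notin H^s$ forces $G(T)\to+\infty$ as $T\to 0^+$, since a bounded subsequence $G(T_n)$ would admit a weak-$\ast$ limit in $H^s$ coinciding with the strong $L^2$ limit $cf_0$, contradicting $cf_0\notin H^s$.

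By the intermediate value theorem applied to the continuous positive function $G$ on $(0,\infty)$, there exists $T^{\ast}=T^{\ast}(c)$ with $G(T^{\ast})=T^{\ast}$; the displayed bound yields $(T^{\ast})^{s+1}\le C_0 c$, so choosing $c$ small enough makes $\delta:=T^{\ast}<\epsilon$. Setting $\tilde f(t):=\phi^{(c)}(\delta-t)$ then produces, by the symmetry of the first paragraph, an unstable Muskat solution with $\|\tilde f(0)\|_{H^s}=G(\delta)=\delta$ and $\|\tilde f(\delta)\|_{H^s}=\|cf_0\|_{H^s}=\infty$. The main obstacle is a rigorous justification of the time-reversal step: placing $\tilde f$ in a function class in which it is a bona fide solution of the unstable problem, including verifying that the implicit identity \eqref{Omegagraph} is preserved under $A_\rho\mapsto -A_\rho$ in the reversed time variable, since no general well-posedness theory is available in the unstable regime. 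The remaining pieces (continuity and blow-up of $G$, the $L^2_\nu\hookrightarrow H^s$ estimate via the analytic weight, the large-time decay, and the explicit counterexample $f_0$) are routine consequences of the earlier results in this paper.
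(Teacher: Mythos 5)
Your proposal is correct and follows the same core strategy as the paper: take $f_0\in L^2\cap\dot{\mathcal F}^{1,1}$ with $\|f_0\|_{\dot{\mathcal F}^{1,1}}$ small but $\|f_0\|_{H^s}=\infty$, evolve forward under the stable equation to gain instant analyticity (hence $H^s$ regularity at any positive time via the $L^2_\nu$ bound of Theorem \ref{L2}), then reverse time to obtain the unstable solution. Where you go further than the paper is in two places. First, you explicitly verify the time-reversal symmetry by observing that $\Omega$, and hence $N_1,N_2,N_3$, are linear in $A_\rho$; the paper asserts the reversal without this justification. Second, and more substantively, the paper's own proof only establishes $\|\tilde f\|_{H^s}(0)<\epsilon$ (which is the form of the claim in Theorem \ref{IllPTheorem}), whereas the theorem as stated in the ill-posedness section claims the exact equality $\|\tilde f\|_{H^s}(0)=\delta$; your intermediate-value-theorem argument (using continuity of $T\mapsto\|\phi^{(c)}(T)\|_{H^s}$, blow-up as $T\to 0^+$, and decay as $T\to\infty$) actually proves the sharper equality and closes that small gap. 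Finally, the concern you flag as "the main obstacle" — placing $\tilde f$ in a solution class for the unstable problem — is not really an issue: $\tilde f(t)=\phi^{(c)}(\delta-t)$ is smooth for $t<\delta$ and satisfies the reversed equation classically there, with $\tilde f(\delta)=cf_0$ attained as a strong $L^2$ limit, which is exactly how the paper treats it (tacitly). So the proposal is sound; it is the same route with extra bookkeeping that is genuinely useful for the exact form of the statement.
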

\begin{proof}
	Take $f_{0} \in L^{2}\cap \dot{\mathcal{F}}^{1,1}$ satisfying condition \eqref{condition} for the Muskat problem in the stable regime such that $\|f_{0}\|_{H^{s}} = \infty$.   By the gain of regularity in \eqref{gainofL2}
	$$\|f\|_{H^{s}}(\delta) \leq \|e^{-\nu\delta|\xi|}|\xi|^{s}\|_{L^{\infty}}\|f\|_{L^{2}_{\nu}}(\delta) \leq c(\delta)\|f_{0}\|_{L^{2}}\exp\Big(R(\|f_{0}\|_{\dot{\mathcal{F}}^{1,1}})\Big) < \epsilon $$
	by picking initial data with $\|f_{0}\|_{L^{2}}$ sufficiently small.
	If $f(x,t)$ is a solution to the stable case problem, then $\tilde{f}(x,t) = f(x,-t+\delta)$ is a solution to the unstable case $\rho_{1} > \rho_{2}$. Hence,
	$$\|\tilde{f}\|_{H^{s}}(0) = \|f\|_{H^{s}}(\delta) < \epsilon \text{   and  } \|\tilde{f}\|_{H^{s}}(\delta) = \|f\|_{H^{s}}(0) = \infty.$$
	This completes the proof.
\end{proof}

\section{Acknowledgements}

FG and EGJ were partially supported by the project P12-FQM-2466 of Junta de Andaluc\'ia, Spain, 
and the grant MTM2014-59488-P (Spain). FG, EGJ and NP were partially supported by the ERC through the Starting Grant project H2020-EU.1.1.-639227. EGJ was partially supported by MECD through a FPU grant from the Spanish Government. RMS was partially supported by the NSF grant DMS-1200747 (USA).

\end{document}